\numberwithin{equation}{section}
\theoremstyle{definition}
\newtheorem{theorem}[equation]{Theorem}
\newtheorem{lemma}[equation]{Lemma}
\newtheorem{proposition}[equation]{Proposition}
\newtheorem{corollary}[equation]{Corollary}
\newtheorem{definition}[equation]{Definition}
\newtheorem{notation}[equation]{Notation}
\newtheorem{example}[equation]{Example}
\newtheorem{remark}[equation]{Remark}
\newcommand\TikZ[1]{\begin{matrix}\begin{tikzpicture}#1\end{tikzpicture}\end{matrix}}
\newcommand{\hstar}{\mathfrak{h}^*}
\newcommand{\oh}{\mathcal{O}}
\newcommand{\el}{\mathrm{L}}
\newcommand{\emp}{\emptyset}
\newcommand{\bemp}{\bemptyset}
\newcommand{\slell}{\widehat{\mathfrak{sl}_\ell}}
\newcommand{\slinf}{\mathfrak{sl}_\infty}
\newcommand{\mbs}{\mathbf{s}}
\newcommand{\mbt}{\mathbf{t}}
\newcommand{\mbz}{\mathbf{z}}
\newcommand{\triv}{\mathrm{Triv}}
\newcommand{\mfh}{\mathfrak{h}}
\newcommand{\rk}{\operatorname{rk}}
\title{The $\mathfrak{sl}_\infty$-crystal combinatorics of higher level Fock spaces}
\author{Thomas Gerber}
\address[T.G.]{Lehrstuhl D f\"ur Mathematik, RWTH Aachen, 52062 Aachen, Germany.}
\email{gerber@math.rwth-aachen.de}
\author{Emily Norton}
\address[E.N.]{Max Planck Institute for Mathematics, Vivatsgasse 7, 53111 Bonn, Germany.}
\email{enorton@mpim-bonn.mpg.de}
\begin{document}

\begin{abstract}
For integers $e,\ell\geq 2$, the level $\ell$ Fock space has an $\mathfrak{sl}_\infty$-crystal structure arising from 
the action of a Heisenberg algebra, 
intertwining the $\widehat{\mathfrak{sl}_e}$-crystal. 
The vertices of these crystals are charged $\ell$-partitions. 
We give the combinatorial rule for computing the arrows anywhere in the $\mathfrak{sl}_\infty$-crystal.
This allows us to pinpoint the location of any charged $\ell$-partition.
As an application, we compute the support of the spherical representation of a cyclotomic rational Cherednik algebra, and in particular, 
the set of parameters such that it is finite-dimensional. We also give an easy abacus characterization of all finite-dimensional representations of type $B$ Cherednik algebras.
\end{abstract}

\maketitle

\tableofcontents

\sloppy

\markleft{THOMAS GERBER AND EMILY NORTON}
\markright{COMBINATORICS OF THE $\slinf$-CRYSTAL}

\section*{Introduction}
\epigraph{...in einem blo\ss en Hin und Her, Vor und Zur\"{u}ck...}{Stefan Zweig}

The level $\ell$ Fock space,
whose standard basis consists of all $\ell$-partitions,
appears as the Grothendieck group of categories of representations of cyclotomic rational Cherednik algebras, Hecke and $q$-Schur algebras at roots of unity, and finite classical groups in non-defining characteristic. 
These identifications require a choice of parameters $\bs\in\Z^\ell$ and $e\in\Z_{\geq 2}$ for the Fock space, then denoted $\cF_\bs$.
There are two important crystal structures on $\mathcal{F}_\mbs$: the $\sle$-crystal and the $\slinf$-crystal,
both of which are represented by a graph whose vertices are the standard basis elements of $\cF_\bs$.
The $\sle$-crystal is categorified by parabolic branching rules in the respective module categories;
this is related to induction and restriction with respect to parabolics of the same type \cite{Ariki2007}, \cite{Shan2011}, \cite{DVV}.
Moreover, the rule for computing the $\sle$-crystal is known: the arrows are given by adding ``good" nodes of a given content modulo $e$ \cite{JMMO1991}, \cite{FLOTW1999}. 
The $\slinf$-crystal is categorified by the action of a Heisenberg algebra on the cyclotomic Cherednik category $\oh$ \cite{ShanVasserot2012},\cite{Losev2015}; 
this action is related to parabolic induction with respect to type $A$ parabolics \cite{ShanVasserot2012}.
Previous work by Losev \cite{Losev2015} and the first author \cite{Gerber2016} gave roundabout or indirect 
constructions of the $\slinf$-crystal, with explicit formulas only in special cases. 
This left open the problem of formulating a direct combinatorial rule for the arrows in the $\slinf$-crystal.

The motivation for studying the combinatorics of the $\slinf$-crystal is that the $\sle$- and $\slinf$-crystals taken in conjunction provide the answers to some important representation theoretic questions, 
in particular about the cyclotomic Cherednik category $\oh$. 
The simple objects $\el(\bla)$ of $\oh$ are parametrized by $\ell$-partitions $\bla$ \cite{GGOR}. 
The finite-dimensional simple modules (cuspidals) are exactly those $\el(\bla)$ such that $\bla$ is a source vertex for both the $\sle$- and $\slinf$-crystals \cite{ShanVasserot2012}, \cite{Losev2015}. 
More generally, the two crystals determine the Harish-Chandra series of $\oh$: the cuspidal support of $\el(\bla)$ can be read off from the location of $\bla$ in the $\sle$- and $\slinf$-crystals \cite{ShanVasserot2012}, \cite{Losev2015}, \cite{LosevSA}.

In this paper, we study the $\slinf$-crystal of higher level Fock spaces (so for $\ell\geq 2$) using combinatorics of $\ell$-abaci, which encode charged $\ell$-partitions using beads on $\ell$ runners.
Our first main result gives a simple rule for computing the incoming and outgoing arrows of the $\slinf$-crystal at a vertex $\cA$ for an arbitrary $\ell$-abacus $\cA$,
from which we derive a formula for the position of $\cA$ in its connected component.
In order to state our theorem, we adapt the notion of $e$-period from \cite{JaconLecouvey2012}, defining certain patterns of $e$ beads in the abacus called fore and aft periods (Definition \ref{defperiods}).

\medskip

\textbf{Theorem A.} 
\begin{enumerate}
\item[(a)] Let $\cA$ and $\cA'$ be $\ell$-abaci. There is an arrow $\cA\to\cA'$ in the $\slinf$-crystal if and only if for some $k\in\N$, $\cA'$ is obtained from $\cA$ by shifting the $k$'th fore period $P_k$ of $\cA$ one unit to the right, and the shift $\tilde{P_k}$ of $P_k$ is equal to $Q_k'$, the $k$'th aft period of $\cA'$.
\item[(b)] The position of a vertex $\cA$ in the $\slinf$-crystal is determined by a partition $\theta$ that can be read off directly from $\cA$.
\end{enumerate}

\medskip

Part (a) of the above statement is Theorem \ref{thmedge}, and provides the analogue for the $\slinf$-crystal to the celebrated $\sle$-crystal rule \cite{FLOTW1999}. 
It allows us to construct the entire connected component of the $\slinf$-crystal containing an arbitrary $\ell$-abacus $\cA$, completing the results of \cite[Section 5]{Gerber2016a}.
Part (b) is a reformulation of Theorem \ref{thmdepth}. This gives an easy way to compute the depth $q(\bla)$ of $\bla$ in the $\slinf$-crystal, which is equal to $|\theta|$
(where $\bla$ is the $\ell$-partition read off from $\cA$). The number $q(\bla)$ gives one part of the support of the simple module $\el(\bla)$ \cite{Losev2015}. 

In special cases, our results allow us to deduce closed formulas. Section \ref{widthe} presents a closed combinatorial formula for each $\ell$-partition in a certain connected component of the $\slinf$-crystal component when the charge $\mbs\in e\Z^\ell$ (this is the lattice $\mathfrak{c_\Z}$ of \cite[Section 1.2]{Losev2015} containing the origin). 
The connected component in question is the one containing the empty $\ell$-partition, and consists of all $\ell$-partitions which are simultaneously singular for the action of 
$\sle$ and of its level-rank dual (see \cite{Gerber2016}). 

\subsection*{Applications to representation theory of Cherednik algebras}

In Section \ref{Applications} we answer some classical questions about the representation theory of cyclotomic rational Cherednik algebras. 
Our first result on this topic consists of a formula for the set of charges $\mbs\in\Z^\ell$ such that the spherical representation $\el(\triv)$ of $H_c(G(\ell,1,n))$ has a given support. 
Fix $\mbs\in\Z^\ell$, normalized so that $s_1=n-e-1$ 
(without loss of generality).
Let $q(\triv)$ and $p(\triv))$ denote the depth of $\cA( \triv, \bs )$ in the $\slinf$- and $\sle$-crystals, respectively. Set $m=\min\{n \mod e,\; s_j \mod e\;|\;s_j\geq 0,\; 2\leq j\leq \ell \}.$ 
The following result is Theorem \ref{Depth of triv}.

\medskip

\textbf{Theorem B.} 
Write $n=qe+r$ with $q,r\in\N\cup\{0\}$ and $r<e$.  We have
\begin{align*}
q(\triv)&=\begin{cases} q\mbox{ if } s_j<0\mbox{ for all }j\geq 2 \\
0\mbox{ if }s_j\geq 0\mbox{ for some }j \geq 2
\end{cases}\\
p(\triv)&=\begin{cases} r\mbox{ if }s_j<0\mbox{ for all }j\geq 2 \\
m \mbox{ if }s_j\geq 0 \mbox{ for some }j\geq 2.
\end{cases}
\end{align*}

\medskip

Theorem B with $\ell=2$ overlaps with a result of \cite[Section 4.2]{Etingof2012}; Corollary \ref{cortrivfd} implies \cite[Corollary 5.4]{GGJL} and answers \cite[Question 5.5]{GGJL} in the affirmative, which is also answered by the forthcoming \cite{GriffethJuteau2017}.
Outside type $A$, finite-dimensional simples $\el(\bla)$ occur for $\bla$ such that $\dim\bla>1$, and it is an interesting and difficult problem to classify them. Theorem \ref{firstcompfd} replaces $\triv=((1^n),\emp,\dots,\emp)$ with $\bla:=(\lambda,\emp,\dots,\emp)$, $\lambda$ an arbitrary partition, and with similar formulas to Corollary \ref{cortrivfd} identifies the parameters such that $\dim\el(\bla)<\infty$.
Corollary \ref{2fd} classifies all finite-dimensional $H_c(B_n)$-modules by a pattern avoidance condition on abaci:

\medskip

\textbf{Theorem C.} A charged bipartition $|\bla,\mbs\rangle$ labels a finite-dimensional representation of $H_c(B_n)$ if and only if the abacus $\cA$ of $|\bla,\mbs\rangle$ avoids
the $e+1$ patterns of Theorem \ref{depth0bipartition}, and additionally, any bead in $\cA$ with a space directly to its left is either the last bead of a period, or sits above an empty space or the last bead of a period.

\medskip

\subsection*{Relation of our work to previous work on the $\slinf$-crystal}
We now discuss the history of the problem and previous work by other authors. A first method for computing the $\slinf$-crystal was found by Losev \cite{Losev2015}. He gives a formula for the action of the annihilation operators in the case of an asymptotic parameter (\cite[Proposition 1.1]{Losev2015}), then introduces wall-crossing functors which commute with the $\slinf$- and $\sle$-crystals in order to transfer the $\slinf$-crystal to other chambers of the parameter space. These functors induce a bijection on the set of $\ell$-partitions; concrete computations with the $\slinf$-crystal can then be performed given a precise understanding of the combinatorics of iterated wall-crossings. 
Jacon and Lecouvey recently found a combinatorial description of crossing a single wall \cite{JaconLecouvey2016},
however the composition of wall-crossings becomes complicated, 
and there is no known closed formula for an arbitrary composition of wall-crossings. 
The number of walls grows as the size of the $\ell$-partitions increases, but the number of asymptotic chambers stays fixed, 
so that the number of walls that must be crossed to compute the $\slinf$-crystal for all parameters and larger $\ell$-partitions increases. 

We use combinatorics of $e$-periods in abaci to study the $\slinf$-crystal, continuing the project begun in the first author's previous work \cite[Section 5]{Gerber2016a},
in which the action of the operators $\tilde{a}_\mu$ of \cite{ShanVasserot2012},\cite{Losev2015} was expressed in terms of adding ``good vertical $e$-strips''.
This yielded a way to compute the $\slinf$-crystal of the $\ell$ Fock space recursively, starting from the empty $\ell$-partition, 
and starting a new connected component at every $\ell$-partition with no incoming arrow.
However, the $\slinf$-crystal structure is not directly given by the operators $\tilde{a}_\mu$, which rather describe the action of the Heisenberg algebra at the crystal level. 
In particular, \cite{Gerber2016a} did not give a formula for the action of the creation and annihilation operators of $\slinf$. 
The questions of describing the edges in the $\slinf$-crystal, how to compute a path leading to the highest weight vertex, how to determine the position or depth of a multipartition in the $\slinf$-crystal, remained open. 
Likewise, the wall-crossing approach left it as an open question whether any direct combinatorial rule existed for computing the arrows in the $\slinf$-crystal or the depth of an $\ell$-partition in an arbitrary chamber \cite[Section 1.2]{Losev2015}. Our construction resolves these questions.

\section{Combinatorics of abaci and crystal structures}\label{abaci+crystals}
\subsection{Abaci}\label{abaci}

Fix $\ell\in\Z_{\geq2}$.
An \textit{
$\ell$-abacus} is a subset $\cA$ of $\Z \times \{ 1,\dots,\ell\}$
such that there exists $m_-,m_+\in\Z$ verifying:
\begin{itemize}
 \item For all $\be\leq m_-$ and for all $j\in\{ 1,\dots,\ell\}$, $(\be,j)\in\cA$.
 \item For all $\be\geq m_+$ and for all $j\in\{ 1,\dots,\ell\}$, $(\be,j)\notin\cA$.
\end{itemize}
The $\ell$-abacus $\cA$ is represented by $\ell$ rows of beads (numbered from $1$ at the bottom to $\ell$ at the top);
the position $(\mathrm{column},\mathrm{row})$ of a bead is given by $(\be,j)$,
and we will therefore write $b=(\be,j)$ for the bead $b$ of $\cA$ in position $(\be,j)$.
Accordingly, a pair $(\be,j)\in\Z \times \{ 1,\dots,\ell\}$ such that $(\be,j)\notin\cA$ will be called a \textit{space} of $\cA$.
We see that an abacus is infinitely full in the left direction and infinitely empty in the right direction.
For $k\in\N$ and $j\in\{ 1,\dots,\ell\}$, the \textit{$k$'th bead} in row $j$ of $\cA$, denoted $b_k^j$, is the $k$'th element of the
set $\cA\cap\{ (\be,j) \, ; \, \be\in\Z \}$ written in decreasing order (with respect to the natural order on $\Z\times\{j\}$ induced by $\geq$ on $\Z$).

We define the \textit{charge} of $\cA$ as the element $\bs=(s_1,\dots,s_\ell)$ of $\Z^\ell$ such that 
in the abacus obtained from $\cA$ by pushing all beads as far to the left as possible,
the rightmost bead in row $j$, say $b=(\be,j)$, verifies $\be=s_j$, for all $j\in\{1 \dots, \ell\}$.

Let $\bs=(s_1,\dots,s_\ell)\in\Z^\ell$.
The set of $\ell$-abaci with charge $\bs$ is in bijection with the set of $\ell$-partitions via the map
$\cA \mapsto \bla=( (\la^1_1,\la^1_2,\dots), \dots, (\la^\ell_1,\la^\ell_2,\dots) )$ defined by 
$$(\be,j) \mapsto \la_k^j=\be-s_j+k-1$$
for all $(\be,j)\in\cA$, where $k\in\Z_{\geq0}$ is such that $(\be,j)=b_k^j$.
We will sometimes also write $\be=\be(b)$ and $j=j(b)$ for a bead $b=(\be,j)$. 
These are the $\beta$-numbers associated to $\cA$ (or to $\bla$ and $\bs$),
see \cite{James1978}, \cite{JamesKerber1984}. 
In particular, we have $b_k^j=(\beta(b_k^j),j)$.

We write $|\bla,\bs\rangle$ for the data consisting of an element $\bs\in\Z^\ell$ and an $\ell$-partition $\bla$, 
and call it a charged $\ell$-partition.
Further, we write $\cA=\cA(\bla,\bs)$ for the corresponding $\ell$-abacus,
and will often identify $\cA$ with $|\bla,\bs\rangle$.

\begin{example}\label{abacusexample}
Let $\ell=3$, $\bla=((10,9,1) , (9^3,7,6,4,1), (6,3^2))$ and $\bs=(-4,0,-5)$. Then we have
$$ \begin{array}{c}
\cA(\bla,\bs)= \\ \vspace{0.4cm}
\end{array}
\TikZ{[scale=.5]
\draw
(13,2)node[]{3}
(13,1)node[]{2}
(13,0)node[]{1}
(11,2)node[fill,circle,inner sep=.5pt]{}
(10,2)node[fill,circle,inner sep=.5pt]{}
(9,2)node[fill,circle,inner sep=.5pt]{}
(8,2)node[fill,circle,inner sep=.5pt]{}
(7,2)node[fill,circle,inner sep=.5pt]{}
(6,2)node[fill,circle,inner sep=.5pt]{}
(5,2)node[fill,circle,inner sep=.5pt]{}
(4,2)node[fill,circle,inner sep=.5pt]{}
(3,2)node[fill,circle,inner sep=.5pt]{}
(2,2)node[fill,circle,inner sep=.5pt]{}
(1,2)node[fill,circle,inner sep=3pt]{}
(0,2)node[fill,circle,inner sep=.5pt]{}
(-1,2)node[fill,circle,inner sep=.5pt]{}
(-2,2)node[fill,circle,inner sep=.5pt]{}
(-3,2)node[fill,circle,inner sep=3pt]{}
(-4,2)node[fill,circle,inner sep=3pt]{}
(-5,2)node[fill,circle,inner sep=.5pt]{}
(-6,2)node[fill,circle,inner sep=.5pt]{}
(-7,2)node[fill,circle,inner sep=.5pt]{}
(-8,2)node[fill,circle,inner sep=3pt]{}
(-9,2)node[fill,circle,inner sep=3pt]{}
(11,1)node[fill,circle,inner sep=.5pt]{}
(10,1)node[fill,circle,inner sep=.5pt]{}
(9,1)node[fill,circle,inner sep=3pt]{}
(8,1)node[fill,circle,inner sep=3pt]{}
(7,1)node[fill,circle,inner sep=3pt]{}
(6,1)node[fill,circle,inner sep=.5pt]{}
(5,1)node[fill,circle,inner sep=.5pt]{}
(4,1)node[fill,circle,inner sep=3pt]{}
(3,1)node[fill,circle,inner sep=.5pt]{}
(2,1)node[fill,circle,inner sep=3pt]{}
(1,1)node[fill,circle,inner sep=.5pt]{}
(0,1)node[fill,circle,inner sep=.5pt]{}
(-1,1)node[fill,circle,inner sep=3pt]{}
(-2,1)node[fill,circle,inner sep=.5pt]{}
(-3,1)node[fill,circle,inner sep=.5pt]{}
(-4,1)node[fill,circle,inner sep=.5pt]{}
(-5,1)node[fill,circle,inner sep=3pt]{}
(-6,1)node[fill,circle,inner sep=.5pt]{}
(-7,1)node[fill,circle,inner sep=3pt]{}
(-8,1)node[fill,circle,inner sep=3pt]{}
(-9,1)node[fill,circle,inner sep=3pt]{}
(11,0)node[fill,circle,inner sep=.5pt]{}
(10,0)node[fill,circle,inner sep=.5pt]{}
(9,0)node[fill,circle,inner sep=.5pt]{}
(8,0)node[fill,circle,inner sep=.5pt]{}
(7,0)node[fill,circle,inner sep=.5pt]{}
(6,0)node[fill,circle,inner sep=3pt]{}
(5,0)node[fill,circle,inner sep=.5pt]{}
(4,0)node[fill,circle,inner sep=3pt]{}
(3,0)node[fill,circle,inner sep=.5pt]{}
(2,0)node[fill,circle,inner sep=.5pt]{}
(1,0)node[fill,circle,inner sep=.5pt]{}
(0,0)node[fill,circle,inner sep=.5pt]{}
(-1,0)node[fill,circle,inner sep=.5pt]{}
(-2,0)node[fill,circle,inner sep=.5pt]{}
(-3,0)node[fill,circle,inner sep=.5pt]{}
(-4,0)node[fill,circle,inner sep=.5pt]{}
(-5,0)node[fill,circle,inner sep=3pt]{}
(-6,0)node[fill,circle,inner sep=.5pt]{}
(-7,0)node[fill,circle,inner sep=3pt]{}
(-8,0)node[fill,circle,inner sep=3pt]{}
(-9,0)node[fill,circle,inner sep=3pt]{}
(11,-1.3)node[]{11}
(10,-1.3)node[]{10}
(9,-1.3)node[]{9}
(8,-1.3)node[]{8}
(7,-1.3)node[]{7}
(6,-1.3)node[]{6}
(5,-1.3)node[]{5}
(4,-1.3)node[]{4}
(3,-1.3)node[]{3}
(2,-1.3)node[]{2}
(1,-1.3)node[]{1}
(0,-1.3)node[]{0}
(-1,-1.3)node[]{-1}
(-2,-1.3)node[]{-2}
(-3,-1.3)node[]{-3}
(-4,-1.3)node[]{-4}
(-5,-1.3)node[]{-5}
(-6,-1.3)node[]{-6}
(-7,-1.3)node[]{-7}
(-8,-1.3)node[]{-8}
(-9,-1.3)node[]{-9}
;
}
$$
\end{example}

The elements $|\bla,\bs\rangle$, for $\bs$ fixed and $\bla$ varying, form the $\C$-basis of the so-called
\textit{level $\ell$ Fock space}, denoted $\cF_\bs$.

\subsection{$\sle$-crystal of the Fock space}\label{chapeaucrystal}

Let $e\in\Z_{\geq 2}$.
Then the Fock space has an $\sle$-crystal structure,
whose data is encoded in the $\sle$-crystal graph
(and as is classically done, we use the term ``crystal'' in place of ``crystal graph'').
This is usually achieved by replacing the ground field $\C$ by the field of rational functions $\C(v)$,
and by endowing the resulting space $\cF_\bs^v$
(called the $v$-deformed Fock space)
with the structure of an integrable $\Ue$-module \cite{JMMO1991}, \cite{GeckJacon2011},
which depends on the parameters $\bs$ and $e$.
By Kashiwara's theory of crystals for quantum groups \cite{Kashiwara1993}, \cite{HongKang2002},
the representation $\cF_\bs^v$ has a $\Ue$-crystal structure,
a combinatorial datum controlling the behaviour of the $\Ue$-module ``at $v=0$''.
Since the role of $v$ is irrelevant for our purpose, we will simply talk of the $\sle$-crystal for $\cF_\bs$. (This is also justified by the fact that the cyclotomic Cherenik category $\cO$ provides an $\sle$-categorification of $\cF_\bs$,
see Section \ref{shancat}.)

The $\sle$-crystal is a directed colored graph whose vertices are the standard basis elements $|\bla,\bs\rangle$ of $\cF_\bs$,
and whose arrows represent the action of the Kashiwara crystal operators $\tf_i$ (or equivalently $\te_i$ by reversing the arrows) 
for $i=0,\dots, e-1$.
Explicit formulas for computing it are known.
We briefly recall the rule for the arrows in the $\sle$-crystal in the language of abaci.

Firstly, this requires the notion of good left- and right-shiftable $i$-beads. 
Let $\cA$ be an $\ell$-abacus with charge $\bs$.
Let $b=(\be,j)\in\cA$, and let $i=\be\mod e$.
If $(\be+1,j)\notin\cA$ (respectively $(\be-1,j)\notin\cA$), then $b$ is called a \textit{right-shiftable $i$-bead} 
(respectively a \textit{left-shiftable $(i-1)$-bead}).
For a right-shiftable bead $b=(\be,j)\in\cA$ (regardless of the value of $\be\mod e$), one can define
\textit{shifting $b$ in $\cA$ one unit to the right} as the procedure replacing $\cA$ by $(\cA\setminus \{ b \}) \sqcup \{ b' \}$, where $b'=(\be+1,j)$.
One defines similarly \textit{shifting $b$ one unit to the left} for a left-shiftable bead $b$.

Secondly, this requires an order on the beads of a given abacus with charge $\bs$.
For beads $b=(\be,j)$ and $b'=(\be',j')$ in $\cA$, set 
$$
b<b' 
\hspace{1cm}
\Longleftrightarrow
\hspace{1cm}
\be<\be' 
\text{ \quad or \quad } 
(\, \be=\be' \mand j>j' \,).
$$
Form the sequence $(b_1,b_2,\dots, b_r)$ of all left- and right-shiftable $i$ beads of $\cA$, where $b_1<b_2<\dots<b_r$, and
encode each $b_k$ by $-$ if it is left-shiftable and by $+$ if it is right-shiftable, and form the corresponding word $w$.
Delete all subwords of the form $-+$ in $w$ recursively, ending up with 
a word $w'$ of the form $++\dots+-\dots--$. The word $w'$ depends only on $w$ and is independent of the order in which the deletions are made.
The \textit{good left-shiftable $i$-bead} (respectively \textit{good right-shiftable $i$-bead}) of $\cA$ is, if it exists, 
the bead $b_k$ of $w$ corresponding to the leftmost $-$ (respectively the rightmost $+$) in $w'$.

\begin{remark}\label{remUglovvsKleshchev}
Note that the order generalises the usual order on beads of level one (i.e. one-runner) abaci in a non-canonical way. 
Indeed, this order gives rises to the so-called \textit{Uglov} realization of the $\sle$-crystal,
in opposition to the \textit{Kleshchev} realization (inducing an isomorphic but different crystal) which arises from another order on beads,
see for instance \cite[Example 6.2.16]{GeckJacon2011}.
For our purposes, it is essential to work with Uglov's realization, see Section \ref{shancat}.
\end{remark}

\begin{theorem}\cite[Section 3]{JMMO1991}, \cite[Theorem 2.8]{FLOTW1999}\label{thmchapeaucrystal}
There is an arrow $\cA\overset{i}{\rightarrow}\cA'$ in the $\sle$-crystal 
if and only if the following equivalent situations hold:
\begin{enumerate}
 \item $\cA'$ is obtained from $\cA$ by shifting its
good right-shiftable $i$-node one unit to the right.
\item $\cA$ is obtained from $\cA'$ by shifting its
good left-shiftable $i$-node one unit to the left.
\end{enumerate}
\end{theorem}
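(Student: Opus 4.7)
The plan is to deduce this from the explicit JMMO formula for the $U_v(\slell)$-action on $\mathcal{F}_\mbs^v$, in which the Chevalley generators $f_i$ and $e_i$ act on a charged $\ell$-partition $|\bla,\bs\rangle$ as $v$-weighted sums over addable, respectively removable, $i$-nodes. Under the abacus bijection, an addable $i$-node is exactly a right-shiftable $i$-bead and a removable $i$-node is exactly a left-shiftable $(i-1)$-bead (the shift of residue by one reflects the fact that moving a bead from column $\beta$ to $\beta+1$ adds a box whose column $\mathrm{mod}\; e$ is $\beta+1$, so the ``removable'' viewpoint sees residue $i$ when the vacated column has residue $i-1$). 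So the theorem is really the statement that the Kashiwara crystal operators $\tilde{f}_i,\tilde{e}_i$ derived from the JMMO action are given by the good-node rule, transported to the abacus.

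The main steps I would carry out are the following. First, I would recall JMMO's formula and in particular the explicit exponents of $v$ that weight each addable/removable $i$-node; these exponents are governed by a positional invariant of the node relative to the other addable and removable $i$-nodes. Second, I would show that this exponent function induces precisely the Uglov order on the set of $i$-shiftable beads of $\mathcal{A}$, so that the addable/removable $i$-nodes can be listed in the order $b_1<b_2<\dots<b_r$ of the theorem, each marked by $+$ or $-$ according to whether the bead is right- or left-shiftable. Third, I would invoke Kashiwara's general machinery: $\tilde{f}_i$ and $\tilde{e}_i$ are computed by organising the $U_v(\mathfrak{sl}_2)$-action generated by $e_i,f_i$ into strings and keeping only the leading behaviour as $v\to 0$. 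Concretely, this turns the $\pm$-word $w$ associated to $\mathcal{A}$ into a reduced word $w'$ by the usual cancellation of adjacent $-+$ pairs, and $\tilde{f}_i$ adds the node corresponding to the rightmost surviving $+$ while $\tilde{e}_i$ removes the node corresponding to the leftmost surviving $-$. This is exactly the good-bead prescription of the statement.

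Finally, I would observe that statements (1) and (2) are indeed equivalent, since the procedure that selects the good right-shiftable $i$-bead in $\mathcal{A}$ is the exact inverse of the procedure that selects the good left-shiftable $i$-bead in $\mathcal{A}'$: both are produced by the same cancellation on the same $\pm$-word, read from opposite ends. Hence the two conditions describe the same pair $(\mathcal{A},\mathcal{A}')$.

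The main obstacle will be step two, namely identifying the combinatorial order on beads built into the JMMO formula with the Uglov order defined in Section \ref{abaci}. This is a delicate bookkeeping issue, because the level $\ell\geq 2$ case is not simply the tensor product of level-one Fock spaces with the naive comultiplication: the order in which shiftable beads contribute to the $v$-power depends on the specific twisted embedding used (as contrasted with the Kleshchev convention mentioned in Remark \ref{remUglovvsKleshchev}). Once that identification is made, however, the rest is a direct application of Kashiwara's theory and translation between $\ell$-partitions and abaci.
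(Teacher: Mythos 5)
The paper offers no proof of this statement: Theorem \ref{thmchapeaucrystal} is quoted verbatim from \cite{JMMO1991} and \cite{FLOTW1999}, so there is no internal argument to compare yours against. Judged on its own, your outline is the standard derivation that those references carry out (JMMO's $v$-weighted action of $f_i,e_i$ on addable/removable $i$-nodes, Kashiwara's signature rule turning the $\pm$-word into the good-node prescription, and the translation to beads), and the overall structure would indeed succeed.

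That said, as a proof it is incomplete precisely where you say it is: step two, the identification of the ordering implicit in the $v$-exponents of the level-$\ell$ action with the total order $b<b'\Leftrightarrow \be<\be'$ or $(\be=\be'$ and $j>j')$ of Section \ref{abaci}, is the entire content of the theorem in the higher-level setting, and you defer it as ``delicate bookkeeping.'' Without pinning down Uglov's twisted tensor-product structure and checking that it yields exactly this order (rather than, say, the Kleshchev order of Remark \ref{remUglovvsKleshchev}, which produces a genuinely different crystal on the same vertex set), the argument does not establish the stated rule. One smaller point: your dictionary ``removable $i$-node $\leftrightarrow$ left-shiftable $(i-1)$-bead'' is off by one against the paper's convention. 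In Section \ref{chapeaucrystal} a bead at column $\be$ with $i=\be\bmod e$ is a left-shiftable \emph{$(i-1)$}-bead, and shifting it left removes a node of shifted content $\be-1$, i.e.\ of residue $i-1$; so a removable $i$-node corresponds to a left-shiftable $i$-bead (sitting at a column $\equiv i+1$), which is what makes the word $w$ in the theorem consist of beads all carrying the same label $i$. This is only a labelling issue, but it needs to be right for the $\pm$-word to be assembled from the correct set of beads.
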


\begin{remark}\label{remtranslationcharge1}
Translating the charge $\bs$ by an integer $t$ ($\mbs\mapsto \mbs+(t,t,\dots,t)$) amounts to translating the 
labels of the arrows of the $\sle$-crystal graph by $t\mod e$.
\end{remark}


Each connected component of the $\sle$-crystal has a unique source vertex. Such elements are called \textit{highest weight vertices} for $\sle$.
In \cite{JaconLecouvey2012}, Jacon and Lecouvey have given a
combinatorial characterisation of highest weight vertices for $\sle$.
Let us recall it here by translating it into the language of abaci.

\begin{definition}\cite[Definition 2.2]{JaconLecouvey2012} \label{defperiodJL}
Let $\cA$ be an $\ell$-abacus.
The \textit{first $e$-period} of $\cA$ is, if it exists, the sequence of $e$ beads 
$P_1=(b_i=(\be_i,j_i))_{i=1,\dots,e}$ of $\cA$ satisfying $\be_{i+1} = \be_i-1$ for all $i=1,\dots,e-1$
such that 
\begin{enumerate} 
\item $j_{i+1} \leq j_i$ for all $i=1,\dots,e-1$,  
 \item $\be_1 = \max \left\{ \be \mid (\be,j)\in\cA, 1\leq j \leq \ell \right\}$,
\item $j_i = \min \left\{ j \mid (\be,j) \in\cA, \; \beta=\beta_i \right\}$.
\end{enumerate}
\end{definition}
The procedure $\cA \mapsto \cA\setminus P_1$ is called \textit{peeling $P_1$ off $\cA$}.
Recursively, one defines the \textit{$k$'th $e$-period} of $\cA$ as, if it exists,
the first $e$-period of the abacus obtained from $\cA$ by peeling off $P_1 \dots, P_{k-1}$.
We call $\cA$ \textit{totally $e$-periodic} if there exists $k\geq0$ and $\br\in\Z^\ell$ such that
peeling off the first $k$ periods of $\cA$ results in $\cA(\bemptyset,\br)$.
We have the following result \cite[Theorem 5.9]{JaconLecouvey2012}.
\begin{proposition} \label{hwvchapeau}
An abacus is a highest weight vertex for $\sle$ if and only if it is totally $e$-periodic.
\end{proposition}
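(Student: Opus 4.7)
The plan is to prove both implications by induction on $|\bla|$, where $\bla$ is the $\ell$-partition encoded by $\cA$. The base case is the empty abacus $\cA(\bemptyset,\br)$: it is trivially totally $e$-periodic (with zero periods), and trivially a highest weight vertex since no bead is shiftable at all. The inductive engine on both sides will be the peeling operation $\cA \mapsto \cA \setminus P_1$, together with Theorem~\ref{thmchapeaucrystal}, which reformulates the absence of incoming $\sle$-arrows as the absence of a good left-shiftable $i$-bead for every $i\in\{0,\dots,e-1\}$.

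For the ``totally periodic $\Rightarrow$ highest weight'' direction, let $\cA$ have first period $P_1=(b_1,\dots,b_e)$ with $b_k=(\beta_1-k+1,j_k)$ and $j_1\geq\dots\geq j_e$. By the inductive hypothesis, $\cA\setminus P_1$ has no good left-shiftable bead of any color. I would then show that adjoining $P_1$ does not create one: for each color $i\in\{0,\dots,e-1\}$, the beads in and adjacent to $P_1$ contribute only cancelling $-+$ pairs to the sign word $w_i$. The crucial observation is that, whenever a bead of $P_1$ sitting in column $\beta$ is left-shiftable of color $i$, the monotonicity $j_{k+1}\leq j_k$ together with the minimality of $j_{k+1}$ forces the bead of $P_1$ in column $\beta+1$ to be right-shiftable of the same color $i$, strictly larger in the order $<$, providing an immediate cancelling $+$. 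This, together with the inductive hypothesis, leaves no uncancelled $-$ in any $w_i$.

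For the ``highest weight $\Rightarrow$ totally periodic'' direction, I would construct $P_1$ column by column. Let $\beta_1=\max\{\beta\mid (\beta,j)\in\cA\}$ and let $b_1=(\beta_1,j_1)$ with $j_1$ minimal in column $\beta_1$. Inductively, assuming $b_1,\dots,b_k$ satisfy the period conditions, I would argue that the highest weight hypothesis forces a bead $b_{k+1}=(\beta_1-k,j_{k+1})$ with $j_{k+1}\leq j_k$ and $j_{k+1}$ minimal in its column: the minimal-row bead of the rightmost column of the ``peeled so far'' abacus $\cA\setminus\{b_1,\dots,b_k\}$ is right-shiftable, and if no bead existed in the next column to its left, one could trace an uncancelled $-$ in some $w_i$, contradicting highest weightness. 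After $e-1$ steps this produces $P_1$. One then verifies that $\cA\setminus P_1$ remains highest weight and concludes by induction.

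The main obstacle on both sides is the bookkeeping for the sign words $w_i$ across the boundary of $P_1$, since shiftability of a bead depends on its immediate neighbors and peeling $P_1$ can in principle alter the shiftability of beads in columns $\beta_1-e$ and $\beta_1+1$. The key technical content is the verification that cancellation in $w_i$ is compatible with peeling, which requires a careful case analysis based on the row profile $(j_1,\dots,j_e)$ of $P_1$ and on the comparison of positions under the order $<$. It is here that the choice of Uglov's ordering (cf.\ Remark~\ref{remUglovvsKleshchev}) is essential: under the alternative Kleshchev ordering the same cancellation pattern need not hold, and the combinatorial characterization of highest weight vertices would take a different form.
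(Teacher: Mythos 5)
The paper does not actually prove this proposition: it is imported wholesale from Jacon--Lecouvey \cite[Theorem 5.9]{JaconLecouvey2012}, so there is no internal proof to compare against. Your outline --- induction by peeling $P_1$, with Theorem \ref{thmchapeaucrystal} turning highest-weightness into the vanishing of every reduced word $w_i'$ --- is in the spirit of how such a result gets proved, but the one concrete combinatorial claim you commit to is false, and the steps you defer to ``bookkeeping'' are where the entire content of the theorem lies.

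Concretely: by the definitions of Section \ref{chapeaucrystal}, a left-shiftable bead in column $\beta$ has color $(\beta-1)\bmod e$, while a right-shiftable bead in column $\beta+1$ has color $(\beta+1)\bmod e$; these agree only when $e=2$. So your ``crucial observation'' pairing a left-shiftable bead of $P_1$ with the bead of $P_1$ one column to its \emph{right} cannot supply a cancelling $+$ of the same color for any $e\geq 3$. The color-correct partner of a left-shiftable $b_k\in P_1$ is $b_{k+1}$, one column to its \emph{left} (or $b_1$ when $k=e$); for $k<e$ its addable node has the same content as the removable node of $b_k$, so whether its $+$ actually cancels the $-$ is decided entirely by the tie-breaking convention for nodes of equal content --- and under the literal bead order of Section \ref{chapeaucrystal} that $+$ sits \emph{before} the $-$ and is not removed by the $-+$ deletion rule. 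A test case: $e=3$, $\ell=2$, $\bla=(\emp,(4))$, $\mbs=(1,-2)$, so row $1$ has beads in columns $1,0,-1,\dots$ and row $2$ in columns $2,-3,-4,\dots$. This abacus is totally $3$-periodic (peel $\{(2,2),(1,1),(0,1)\}$, $\{(-1,1),(-2,1),(-3,1)\}$, $\{(-3,2),(-4,1),(-5,1)\}$, $\{(-4,2),(-5,2),(-6,1)\}$ to reach $\cA(\bemptyset,(-7,-7))$), yet $b_1=(2,2)$ is left-shiftable of color $1$ and the only right-shiftable $1$-bead is $(1,1)=b_2<b_1$, so there is no cancelling $+$ to its right in any column and certainly none in column $3$. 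Your argument therefore does not close even in the forward direction. Beyond this, peeling $P_1$ changes the shiftability of beads \emph{outside} $P_1$ (beads immediately to the left of, or underneath, beads of $P_1$ gain or lose signs), so $w_i(\cA)$ and $w_i(\cA\setminus P_1)$ differ by more than the contributions of $P_1$ itself; relating the two reduced words is the real work in the forward direction, and in the converse direction the assertion that $\cA\setminus P_1$ is again a highest weight vertex is exactly as hard as the statement being proved. Neither is supplied, so the proposal has a genuine gap on both sides of the equivalence.
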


\subsection{$\slinf$-crystal of the Fock space}\label{Scrystal}

There is a second important action on level $\ell$ Fock spaces, namely that of a Heisenberg algebra.
This was first investigated by Uglov \cite{Uglov1999}, where he considered
the action of a quantum Heisenberg algebra on the $v$-deformed Fock space.
It has been shown in \cite{Losev2015} (using a Heisenberg categorical action appearing in \cite{ShanVasserot2012}) that this action yields a new crystal structure on $\cF_\bs$, called the $\slinf$-crystal
(or sometimes the Heisenberg crystal).
Another definition of the $\slinf$-crystal has then been given in \cite{Gerber2016a} in purely combinatorial terms.

The $\slinf$-crystal is determined by the action of some operators $\tb_\si$ \footnote{In \cite{ShanVasserot2012}, they are denoted $\ta_\si$.}, where $\si$ is a partition,
that commute with the Kashiwara $\sle$-crystal operators $\tf_i$, $\te_i$.
It is encoded in a directed graph whose vertices are all abaci with charge $\bs$,
and each of whose connected components is isomorphic to the Kashiwara $\slinf$-crystal on
partitions (the direct limit of the Kashiwara $\mathcal{U}_v(\mathfrak{sl}_e)$-crystals on partitions),
i.e. to the Young graph \cite[Definition 5.1.2]{Sagan2001} (the branching graph of the symmetric group in characteristic $0$).
Each connected component of this graph has a unique source vertex, called a \textit{highest weight vertex} for $\slinf$. 
For each $\cA\in\cF_\bs$, there exists a unique partition $\theta$ such that
$\cA=\tb_\theta\cA^\circ$, where $\cA^\circ$ is the unique highest weight vertex $\cA^\circ$ in the connected component of $\cA$.
\footnote{In \cite{Gerber2016} and \cite{Gerber2016a}, the partition $\theta$ is denoted $\ka$.}.

In \cite[Section 6.3]{Gerber2016}, the operators $\tb_\si$ have been expressed as
the composition of more elementary operators $\tb^+_{k}$, where $k\in\Z$.
Drawing an arrow labeled by $k$ between two abaci $\cA$ and $\cA'$
whenever $\cA' = \tb^+_{k}\cA$ gives the description of the $\slinf$-crystal
isomorphic to the Young graph.
Reversing the arrows gives the action of operators denoted by $\tb^-_{k}$
\footnote{In \cite{Gerber2016} and \cite{Gerber2016a}, $\tb^\pm_k$ are denoted $\tb_{\pm1,k}$}.
The graph isomorphism between an arbitrary connected component of the $\slinf$-crystal and the Young graph is
then given by the bijection $\cA=\tilde{b}_\theta \cA^\circ \mapsto \theta$, and an arrow labeled $k$ goes to an arrow that adds a box of content $k$ to a partition.

\begin{remark}
The operators $\tb^\pm_k$ are analogues of the Kashiwara crystal operators for the Heisenberg algebra.
More precisely, they are the image of the Kashiwara operators of $\Uinf$ under a certain bijection, see \cite{Gerber2016} and \cite{Gerber2016a}.
\end{remark}

An explicit formula for computing the $\slinf$-crystal of $\cF_\bs$ using $\tb_\si$ has been first
given in \cite[Theorem 5.11]{Gerber2016a} using the combinatorics of vertical strips. Let us recall it.

\begin{theorem}\label{thmverticalstrips} Let $\si=(\si_1,\si_2,\dots)$ be a partition.
The operator $\tb_\si$ acts on any charged multipartition by recursively adding 
the $k$'th good vertical $e$-strip $\si_k$ times for each $k\geq1$.
\end{theorem}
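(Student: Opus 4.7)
The plan is to reduce the statement to the combinatorial description of the elementary operators $\tb^+_k$ and the identification of each connected component of the $\slinf$-crystal with the Young graph.

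First I would exploit the factorization of $\tb_\sigma$ into a product of the $\tb^+_k$'s. Since each connected component of the $\slinf$-crystal is isomorphic to the Young graph via $\cA = \tb_\theta \cA^\circ \mapsto \theta$, and the arrow labeled $k$ in the graph represents the action of $\tb^+_k$ (adding a box of content $k$), writing down any path in the Young graph from $\emptyset$ to $\sigma$ gives a factorization $\tb_\sigma = \tb^+_{k_r} \cdots \tb^+_{k_1}$. The convenient choice is the column-by-column path: fill the first column of $\sigma$ (which has height $\sigma^t_1$) by successive box additions, then the second column, and so on. Within a single column, the sequence of contents $k_i$ is an arithmetic progression with common difference $+1$.

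Second, I would invoke the combinatorial description of the $\tb^+_k$ established in \cite[Section 5]{Gerber2016a}: applied to an abacus, $\tb^+_k$ shifts a specific ``good vertical $e$-strip'' (the one whose content parameter is $k$) one unit to the right. Repeated applications corresponding to one column of $\sigma$ then correspond to shifting the same distinguished strip several times; moving from one column of $\sigma$ to the next corresponds to moving on to the next good vertical $e$-strip in the updated abacus. Reorganizing the product by columns and then transposing from a column-description of $\sigma$ back to a row-description yields exactly the recursive procedure in the statement: the $k$'th good vertical $e$-strip is added (i.e.\ shifted) $\sigma_k$ times.

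Third, I would need to check well-definedness and independence of path. The operator $\tb_\sigma$ depends only on $\sigma$ and not on the path in the Young graph chosen to build it, so one has to verify that after applying the first $k-1$ blocks of operators the $k$'th good vertical $e$-strip on the updated abacus is unambiguously defined and matches what the recursion predicts, regardless of auxiliary choices. This is the main obstacle: the ``goodness'' condition interacts nontrivially with the iteration, because after shifting one strip the admissibility of the next can in principle be affected. The verification amounts to showing that distinct good vertical $e$-strips sit sufficiently far apart on the abacus that shifting the first does not perturb the recognition of the subsequent ones; this rigidity is exactly what is established in the proof of \cite[Theorem 5.11]{Gerber2016a} and what makes the recursive formulation valid.
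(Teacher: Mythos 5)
The paper does not prove this statement: Theorem \ref{thmverticalstrips} is recalled verbatim from \cite[Theorem 5.11]{Gerber2016a} (the sentence preceding it reads ``has been first given in \cite[Theorem 5.11]{Gerber2016a} \dots Let us recall it''), so there is no in-paper argument to compare yours against. Judged on its own, your proposal is not a proof but a reduction of the statement to itself. The two steps carrying all the mathematical weight are both discharged by citation to the source of the theorem: in your second step, the assertion that $\tb^+_k$ acts on an abacus by shifting a distinguished good vertical $e$-strip is essentially the $|\si|=1$ case of the claim, imported from \cite[Section 5]{Gerber2016a}; and in your third step you explicitly defer the one difficulty you identify (that shifting one strip does not disturb the recognition of the next) to ``the proof of \cite[Theorem 5.11]{Gerber2016a}'' --- i.e.\ to the proof of the statement under review. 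What a genuine proof must do, and what your outline never engages, is connect the \emph{definition} of $\tb_\si$ --- which comes from the Heisenberg categorical action via the functor $A_\si$ of Section \ref{amu}, or equivalently from Uglov's Heisenberg action on the wedge space --- with the combinatorics of vertical $e$-strips. That passage is the entire content of the theorem.

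Two further points would need care even if the elementary-operator description were granted as independent input. First, the recursion in the statement is row-by-row (shift the first good strip $\si_1$ times, then the second $\si_2$ times, \dots), whereas your column-by-column path interleaves shifts of distinct strips; identifying the two composites requires the path-independence of $\tb_\si$ on the crystal, which is only available once each elementary arrow is already known to be the claimed strip shift, so ``reorganizing the product and transposing'' is not mere bookkeeping. Second, the theorem asserts a formula for $\tb_\si$ applied to \emph{any} charged multipartition, not only to highest weight vertices of the $\slinf$-crystal, while a path in the Young graph from $\emp$ to $\si$ only computes $\tb_\si\cA^\circ$ for a source $\cA^\circ$; your argument would at minimum need to say what $\tb_\si$ means on a vertex of positive depth and why the same factorization applies there.
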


Thanks to this formula we can reconstruct the whole $\slinf$-crystal of $\cF_\bs$. 
However, the process of constructing the $\slinf$-crystal using Theorem \ref{thmverticalstrips} is recursive: it requires one to start from the highest weight vertex $(\emp,\emp,\dots,\emp)$
and then to apply the various operators $\tb_\si$, adding a new crystal component whenever there is no incoming arrow at an $\ell$-partition $\bla$.
In Section \ref{S-crystal edges}, we will give an explicit description of the action of $\tb^\pm_k$ on an arbitrary $\ell$-partition $\bla$ in terms of abaci, consequently permitting the construction of any connected component of the $\slinf$-crystal starting from any $\bla$ in that connected component.
This will yield an easy formula (Theorem \ref{thmdepth}) for finding the position of any vertex in its connected component without going back to the source.

\begin{remark}\label{remtranslationcharge2}
The $\slinf$-crystal is invariant under componentwise translation of the charge $\bs$ by any integer $t$.
For this reason, together with Remark \ref{remtranslationcharge1}, we may omit the labeling of columns by $\beta$-numbers when drawing an abacus.
\end{remark}

\begin{example}\label{exampleHcrystal}
The following is the beginning of two connected components of the $\slinf$-crystal of the level 2 Fock space
with parameters $\bs=(0,1)$ and $e=3$.

\begin{figure}[H] 

\makebox[\textwidth][c]{

\scalebox{.65}{

\begin{tabular}{cc}

\begin{tikzpicture}[scale=0.6, every node/.style={scale=1}]

\node (a) at (8,12) {%
${ \tiny  \m \, \m }$
};

\node (b) at (8,8) {%
{${ \tiny \young(\ ,\ ) \, \young(\ )   }$}
};

\node (c1) at (5,4) {%
{${ \tiny \young(\ \ ,\ \ ) \, \young(\ \ ) }$}
}; 
\node (c2) at (11,4) {%
{${ \tiny \young(\ ,\ ,\ ) \, \young(\ ,\ ,\ ) }$}
};

\node(d1) at (2,0) {%
{${ \tiny \young(\ \ \ ,\ \ \ ) \, \young(\ \ \ )  }$}
};
\node(d2) at (8,0) {%
{${ \tiny \young(\ \ ,\ \ ,\ ) \, \young(\ \ ,\ ,\ ) }$}
};
\node(d3) at (14,0) {%
{${ \tiny \young(\ ,\ ,\ ,\ ,\ ) \, \young(\ ,\ ,\ ,\ ) }$}
};

\node(e1) at (-1,-4) {%
{${ \tiny \young(\ \ \ \ ,\ \ \ \ ) \, \young(\ \ \ \ )   }$}
};
\node(e2) at (4,-4) {%
{${ \tiny \young(\ \ \ ,\ \ \ ,\ ) \, \young(\ \ \ ,\ ,\ )  }$}
};
\node(e3) at (8,-4) {%
{${ \tiny \young(\ \ ,\ \ ,\ \ ) \, \young(\ \ ,\ \ ,\ \ ) }$}
};
\node(e4) at (12,-4) {%
{${ \tiny \young(\ \ ,\ \ ,\ ,\ ,\ ) \, \young(\ \ ,\ ,\ ,\ )  }$}
};
\node(e5) at (16,-4) {%
{${ \tiny \young(\ ,\ ,\ ,\ ,\ ,\ ) \, \young(\ ,\ ,\ ,\ ,\ ,\ ) }$}
};

\draw[->] (a) -- node [font=\tiny, left] {} (b) ;
\draw[->] (b) -- node [font=\tiny,left] {} (c1) ;
\draw[->] (b) -- node [font=\tiny, above] {} (c2) ;
\draw[->] (c1) -- node [font=\tiny, above] {} (d1) ;
\draw[->] (c1) -- node [font=\tiny, above] {} (d2) ;
\draw[->] (c2) -- node [font=\tiny] {} (d3) ;
\draw[->] (c2) -- node [font=\tiny, above] {} (d2) ;

\draw[->] (d1) -- node [font=\tiny, above] {} (e1) ;
\draw[->] (d1) -- node [font=\tiny, above] {} (e2) ;
\draw[->] (d2) -- node [font=\tiny, above] {} (e2) ;
\draw[->] (d2) -- node [font=\tiny, above] {} (e3) ;
\draw[->] (d2) -- node [font=\tiny, above] {} (e4) ;
\draw[->] (d3) -- node [font=\tiny, above] {} (e4) ;
\draw[->] (d3) -- node [font=\tiny, above] {} (e5) ;

\node (f) at (0,-7) { \ };

\end{tikzpicture}

&

\begin{tikzpicture}[scale=0.6, every node/.style={scale=1}]

\node (a) at (8,12) {%
${ \tiny  \young(\ ) \, \m }$
};

\node (b) at (8,8) {%
{${ \tiny \young(\ ,\ ) \, \young(\ ,\ )   }$}
};

\node (c1) at (5,4) {%
{${ \tiny \young(\ \ ,\ \ ) \, \young(\ \ ,\ ) }$}
}; 
\node (c2) at (11,4) {%
{${ \tiny \young(\ ,\ ,\ ,\ ) \, \young(\ ,\ ,\ ) }$}
};

\node(d1) at (2,0) {%
{${ \tiny \young(\ \ \ ,\ \ \ ) \, \young(\ \ \ ,\ )  }$}
};
\node(d2) at (8,0) {%
{${ \tiny \young(\ \ ,\ \ ,\ ,\ ) \, \young(\ \ ,\ ,\ ) }$}
};
\node(d3) at (14,0) {%
{${ \tiny \young(\ ,\ ,\ ,\ ,\ ) \, \young(\ ,\ ,\ ,\ ,\ ) }$}
};

\node(e1) at (-1,-4) {%
{${ \tiny \young(\ \ \ \ ,\ \ \ \ ) \, \young(\ \ \ \ ,\ )   }$}
};
\node(e2) at (4,-4) {%
{${ \tiny \young(\ \ \ ,\ \ \ ,\ ,\ ) \, \young(\ \ \ ,\ ,\ )  }$}
};
\node(e3) at (8,-4) {%
{${ \tiny \young(\ \ ,\ \ ,\ \ ,\ ) \, \young(\ \ ,\ \ ,\ \ ) }$}
};
\node(e4) at (12,-4) {%
{${ \tiny \young(\ \ ,\ \ ,\ ,\ ,\ ) \, \young(\ \ ,\ ,\ ,\ ,\ )  }$}
};
\node(e5) at (16,-4) {%
{${ \tiny \young(\ ,\ ,\ ,\ ,\ ,\ ,\ ) \, \young(\ ,\ ,\ ,\ ,\ ,\ ) }$}
};

\draw[->] (a) -- node [font=\tiny, left] {} (b) ;
\draw[->] (b) -- node [font=\tiny,left] {} (c1) ;
\draw[->] (b) -- node [font=\tiny, above] {} (c2) ;
\draw[->] (c1) -- node [font=\tiny, above] {} (d1) ;
\draw[->] (c1) -- node [font=\tiny, above] {} (d2) ;
\draw[->] (c2) -- node [font=\tiny] {} (d3) ;
\draw[->] (c2) -- node [font=\tiny, above] {} (d2) ;

\draw[->] (d1) -- node [font=\tiny, above] {} (e1) ;
\draw[->] (d1) -- node [font=\tiny, above] {} (e2) ;
\draw[->] (d2) -- node [font=\tiny, above] {} (e2) ;
\draw[->] (d2) -- node [font=\tiny, above] {} (e3) ;
\draw[->] (d2) -- node [font=\tiny, above] {} (e4) ;
\draw[->] (d3) -- node [font=\tiny, above] {} (e4) ;
\draw[->] (d3) -- node [font=\tiny, above] {} (e5) ;
\end{tikzpicture}
\end{tabular}
}
}
\label{exScrystal}
\end{figure}

\end{example}

\section{The cyclotomic rational Cherednik algebra and its Category $\oh$}\label{rca}
 In this section we explain the representation theoretic meaning of the $\slinf$-crystal from the viewpoint of cyclotomic rational Cherednik algebras.

\subsection{The rational Cherednik algebra $\mathsf{H}_c(W)$} A finite subgroup $W\subset \mathsf{GL}_n(\C)$ is called a complex reflection group if it is generated by $S:=\{s\in W\mid\rk (\mathrm{Id}-s)=1\}$ (such elements $s$ are called reflections). For example, any finite Coxeter group is a complex reflection group. 
The $\C$-vector space $\mathfrak{h}$ of minimal dimension such that there is a faithful representation $\rho:W\rightarrow \mathsf{GL}(\mathfrak{h})$ is called the reflection representation of $W$. 
For each reflection $s\in S$, let $\alpha_s\in \mathfrak{h}^*$ and $\alpha_s^\vee\in\mathfrak{h}$ be eigenvectors with eigenvalue different from $1$ such that $\langle \alpha_s,\alpha_s^\vee\rangle=2$ where $\langle-,-\rangle:\hstar\times\mathfrak{h}\longrightarrow\mathbb{C}$ is the natural pairing. 
Let $c:S\rightarrow \C$ be a conjugacy-invariant function on the set of reflections of $W$ and write $c_s$ for $c(s)$. We may think of $c$ as a multiparameter attaching a parameter to each conjugacy class of reflections; with this understood, we will refer to $c$ simply as a \textit{parameter}. The rational Cherednik algebra $H_c(W)$ is the quotient of $\mathrm{T}(\mathfrak{h}\oplus\hstar)\rtimes \mathbb{C}[W]$ by the relations $$[x,x']=0,\quad[y,y']=0,\quad[y,x]=\langle y,x \rangle -\sum_{s\in S} c_s\langle \alpha_s,y\rangle \langle x,\alpha_s^\vee\rangle s$$
for all $x,x'\in\hstar$ and all $y,y'\in\mathfrak{h}$ \cite{EtingofGinzburg2002}. The rational Cherednik algebra is called rational double affine Hecke algebra (rational DAHA) by some authors, e.g. in \cite{ShanVasserot2012}, \cite{VV}.

\subsection{The cyclotomic rational Cherednik algebra $\mathsf{H}_c(G(\ell,1,n))$}\label{RCAparams1}

The group $G(\ell,1,n)\subset\mathrm{GL}(n,\C)$ consists of all $n\times n$ permutation matrices whose nonzero entries are $\ell$'th roots of $1$. It is the semidirect product $\Gamma^n\rtimes S_n$ where $\Gamma$ is the cyclic group of order $\ell$. The Weyl groups $S_n=A_{n-1}=G(1,1,n)$ and $B_n=G(2,1,n)$ appear as special cases. From now on, take $\ell\geq 2$. The reflection representation $\mathfrak{h}$ is then isomorphic to $\C^n$. Fix the standard basis $\{y_i\}$, $i=1,\dots,n$, for $\mathfrak{h}$ (i.e. $y_i$ is the vector with $1$ in the $i$'th place and $0$'s elsewhere). Then $\mathfrak{h}^*\simeq\C^n$ with dual basis $\{x_i\}$, $i=1,\dots,n$.  The set of reflections $S$ in $G(\ell,1,n)$ is given by:
$$S=\{\gamma_i\mid 1\neq\gamma\in\Gamma,\;i=1,\dots, n\}\cup
\{s_{ij}^\gamma\mid \gamma\in\Gamma,\; 1\leq i<j\leq n\}.$$
where $\gamma_i:=\mathrm{diag}(1,\dots,\gamma,\dots,1)$ with $\gamma$ in the $i$'th place, and $s_{ij}^\gamma:=\gamma_i^{-1}s_{ij}\gamma_i=s_{ij}\gamma_i\gamma_j^{-1}$.
There are $\ell$ conjugacy classes of reflections in $G(\ell,1,n)$: one class containing all $s_{ij}^\gamma$, and one class for each $\gamma\in\Gamma$, $\gamma\neq 1$, consisting of all $\gamma_i$, $i=1,\dots,n$. 

For each reflection $s\in S$ we fix the eigenvectors $\alpha_s$, $\alpha_s^\vee$ as follows. For $s=s_{ij}^\gamma$ we take $\alpha_s=\alpha_{ij}^\gamma:=(0,\dots,-\gamma,\dots,1,\dots,0)\in\hstar$ with $-\gamma$ in the $i$'th place and $1$ in the $j$'th place. Then 
$(\alpha_{ij}^\gamma)^\vee:=(0,\dots,-\gamma^{-1},\dots,1,\dots,0)\in\mathfrak{h}$. 
The reflection $s=\gamma_i$ has $x_i\in\hstar$ and $y_i\in\mathfrak{h}$ as eigenvectors with eigenvalue different from $1$, but in order that $\langle\alpha_s,\alpha_s^\vee\rangle=2$, we take $\alpha_s=x_i$ and $\alpha_s^\vee=2y_i$.

Fix a parameter $c:S\rightarrow\C$. Let $\kappa$ be the parameter for the conjugacy class of $s_{ij}^\gamma$, and let $c_\gamma$ be the parameter for the conjugacy class of $\gamma_i$, $\gamma\neq 1\in\Gamma$. The cyclotomic rational Cherednik algebra is generated by the group algebra $\C[G(\ell,1,n)]$ and the two polynomial algebras $\C[x_1,\dots,x_n]$ and $\C[y_1,\dots,y_n]$ with relations $s_{ij}y_k=y_{s_{ij}(k)}s_{ij}$, $s_{ij}x_k=x_{s_{ij}(k)}s_{ij}$, $\gamma_j y_i=\delta_{ij}\gamma y_i\gamma_j$, $\gamma_j x_i=\delta_{ij}\gamma^{-1} x_i\gamma_j$, and the following two relations.
If $i\neq j$ then:
$$[y_i,x_j]=-\sum_{\gamma\in\Gamma}\kappa \langle \alpha_{ij}^\gamma,y_i\rangle \langle x_j,(\alpha_{ij}^\gamma)^\vee\rangle s_{ij}^\gamma=\kappa\sum_{\gamma\in\Gamma}\gamma s_{ij}^\gamma$$
If $i=j$ then:
$$
[y_i,x_i]=1-\kappa\sum_{j\neq i}\sum_{\gamma\in\Gamma}s_{ij}^\gamma -\sum_{1\neq\gamma\in\Gamma}2c_\gamma\gamma_i
$$

\subsection{Parameters for the cyclotomic Cherednik algebra versus parameters for the Fock space}\label{RCAparams2} Shan and Vasserot introduce a reparametrization for $H_c(G(\ell,1,n))$ in order to translate between the Cherednik algebra parameters and the Fock space parameters \cite[Section 3.3]{ShanVasserot2012}:
$$h=-\kappa,\quad -c_\gamma'=\sum_{p=0}^{\ell-1}\gamma^{-p}h_p$$
Here, $c_\gamma'=2c_\gamma$ for $\gamma\neq 1$ and $c_1'=-1$. If $e\in\Z_{\geq 2}$, $\mbs=(s_1,\dots,s_\ell)\in\Z^\ell$ are the parameters for the level $\ell$ Fock space, then following \cite[Theorem 6.10]{ShanVasserot2012} the corresponding Cherednik algebra parameters are given by:
$$h=-1/e,\quad h_p=(s_{p+1}-s_p)/e,\quad p>0. $$

Then $\kappa=1/e>0$, and using this convention we are opposite to Losev in \cite{Losev2015} who takes $\kappa=-1/e$; therefore by \cite[Section 4.1.4]{Losev2015}, whenever Losev has $|(\lambda^1,\lambda^2,\dots,\lambda^\ell),(s_1,s_2,\dots,s_\ell)\rangle$, we have\\ $|((\lambda^1)^t,(\lambda^2)^t,\dots,(\lambda^\ell)^t),(-s_1,-s_2,\dots,-s_\ell)\rangle$.

\subsection{Category $\oh_c(W)$} 
The category $\oh_c(W)$ is the category of finitely generated $H_c(W)$-modules on which $\mfh$ acts locally nilpotently \cite{GGOR}. It is a highest weight category with finitely many simple objects, which are labeled by the irreducible representations of $W$ over $\C$ \cite{GGOR}. In the case of $W=G(\ell,1,n)$, this means that the simple objects are labeled by $\ell$-partitions $\bla$ of $n$. We write $\el(\bla)$ for the simple module in $\oh_c(W)$ labeled by $\bla$. For a generic parameter $c$, $\oh_c(W)\simeq \C[W]$ and $\el(\lambda)\cong \C[x_1,\dots,x_n]\otimes\lambda$ as $\C$-vector spaces for all $\lambda\in\Irr W$. We are interested in those parameters for which $\oh_c(W)$ is not a semisimple category and for which the simple modules can be smaller.

\subsection{Integral parameters}\label{integralpar}
In this paper we always assume that $\kappa=1/e$ for some integer $e\geq 2$ and that the Fock space $\mathcal{F}_\mbs$ has integral charge $\mbs\in\Z^\ell$; we call this the case of integral parameters. 
To consider all parameters $c$ such that $\oh_c(G(\ell,1,n))$ is not a semisimple category, it is necessary to allow $\kappa=r/e$ with $r\in\Z$, $\mathrm{gcd}(r,e)=1$, and also to allow rational charges $\mbs\in\Q^\ell$ for $\mathcal{F}_\mbs$. 
However, the general case of rational parameters can be reduced to the case of integral parameters.
Dipper and Mathas proved such a reduction in the case of Hecke algebras of $G(\ell,1,n)$ because of the existence of a certain Morita equivalence, see \cite[Theorem 1.1]{DipperMathas} or \cite[Section 5.4]{GeckJacon2011} for details.
The Cherednik category $\oh_c(G(\ell,1,n))$ is a highest weight cover of the category of finite-dimensional modules
for the Hecke algebra of $G(\ell,1,n)$ at certain parameters determined by $c$ \cite{Rouquier} (highest weight covers were defined in \cite{Rouquier}, and further developed in \cite{RSVV},\cite{Losev}). 
Highest weight covers are unique up to equivalence \cite{Rouquier},\cite{RSVV},\cite{Losev}; using this property, Rouquier showed that $\oh_c(G(\ell,1,n))$ is equivalent to a tensor product of category $\oh$'s of $G(\ell,1,n)$ with integral parameters \cite[Theorem 6.13, Remark 6.16]{Rouquier}, with a restriction on $\kappa$ which was later removed by Losev \cite[Proposition 3.2]{Losev2015a}. Without loss of generality we may therefore consider the integral parameter case.

\subsection{Parabolic induction and restriction, and Harish-Chandra series}
For any parabolic subgroup $W'\subset W$ (i.e. the stabilizer of a point in $\mathfrak{h}$), there are exact, biadjoint induction and restriction functors $\Ind_{W'}^W:\oh_c(W')\rightarrow\oh_c(W)$ and $\Res_{W'}^W:\oh_c(W)\rightarrow\oh_c(W')$ \cite{BezrukavnikovEtingof}. Finite-dimensional modules are characterized by the property that their restriction to $\oh_c(W')$ for any parabolic $W'\subset W$ is $0$ \cite{BezrukavnikovEtingof}. For this reason, finite-dimensional simple modules are sometimes referred to as \textit{cuspidal} modules (as in \cite[Section 5.7]{GGJL}, for example).  Any simple module $\el\in\oh_c(W)$ appears as a direct summand of the head of $\Ind_{W'}^W\;\el'$
for some finite-dimensional simple module $\el'\in\oh_c(W')$ and some parabolic subgroup $W'\subset W$ \cite{BezrukavnikovEtingof}. Let us call $(W',\el')$ a \textit{cuspidal pair} if $\el'$ is a finite-dimensional simple $\oh_c(W')$-module. The cuspidal pair $(W',\el')$ such that $\Hom_{\oh_c(W)}(\Ind_{W'}^W\el',\el)\neq 0$ is unique up to $W$-conjugacy \cite{LosevSA}. We will then use the standard terminology from Lie theory and refer to $(W',\el')$ as the \textit{cuspidal support} of $\el$, and say that $\el$ belongs to the \textit{Harish-Chandra series} of $(W',\el')$. This is completely analogous to Harish-Chandra series in Lie theory. In the case of finite unitary groups in positive, non-defining characteristic, the parametrization of Harish-Chandra series by cuspidal pairs is known to coincide with the parametrization of the Harish-Chandra series of a sum of type $B$ Cherednik algebras, see \cite{DVV2}.

\subsection{Parabolic subgroups and supports of simple modules in $\oh_c(G(\ell,1,n))$}\label{support} The support of $\el(\bla)$ is the set-theoretic support of $\el(\bla)$ as a $\C[\mathfrak{h}]$-module. It has a concrete description for $G(\ell,1,n)$ which we explain now. Let $W=G(\ell,1,n)$ for $\ell\geq 2$ and $W'\subset W$ be a parabolic subgroup. 
Parabolics $W'$ of $W$ are of the form $G(\ell,1,n_1)\times S_{n_2}\times S_{n_3}\times\dots\times S_{n_u}$ with $n_1+n_2+n_3+\dots+n_u\leq n$, up to $W$-conjugacy. We are interested in those parabolics $W'$ such that $\oh_c(W')$ contains a finite-dimensional representation. Suppose $\kappa=1/e$ is the parameter for the transpositions of $S_n\subset W$. Then $\oh_c(W')$ can contain a finite-dimensional representation only when $W'$ is of the form: 
$$W'=G(\ell,1,n-eq-p)\times S_e^{\times q}$$
up to conjugacy, with $0\leq q\leq \left\lfloor\frac{n}{e}\right\rfloor$ and $0\leq p\leq n-eq$; such a parabolic $W'$ is the stabilizer of the point 
$$a=(0,...,0,x_1,x_2,\dots,x_p,y_1,y_1,\dots,y_1,y_2,y_2,\dots,y_2,\dots,y_q,y_q,\dots,y_q)\in\mfh$$
where the entry $0$ occurs $n-eq-p$ times, and each entry $y_i$ occurs $e$ times \cite{Losev2015}. 
If $(\el',W')$ is the cuspidal support of $\el(\bla)$ for some finite-dimensional simple $\el'\in\oh_c(W')$, $W\mfh^{W'}$ coincides with the support of $\el(\bla)$.   We then write $p(\bla)$ for $p$ above and $q(\bla)$ for $q$ above, as in \cite{Losev2015}.

\subsection{$i$-induction and the $\sle$-crystal}\label{shancat} 
One part of the support of $\el(\bla)$, determined by the integer $p(\bla)$, is  given by the depth of $\bla$ in the $\sle$-crystal \cite{Shan2011},\cite{Losev2015}. Using Chuang and Rouquier's technique of $\mathfrak{sl}_2$-categorification \cite{ChuangRouquier2008}, Shan showed that the induction and restriction functors $\Ind_{W'}^W$ and $\Res_{W'}^W$,
for $W=G(\ell,1,n)$ and $W'=G(\ell,1,n-1)$, split into a direct sum of functors $F_i$ and $E_i$ respectively, $i=0,\dots, e-1$, which satisfy the defining relations of the Chevalley generators $f_i$, $e_i$ of $\sle$ \cite{Shan2011}: 
$$\Ind_{W'}^W =\bigoplus_{0\leq i\leq e-1} F_i,\quad\quad \Res_{W'}^W=\bigoplus_{0\leq i\leq e-1}E_i.$$ 
The functors $F_i$ and $E_i$ are called $i$-induction and $i$-restriction, respectively. Taking the head of $F_i(\el(\bla))$ and the socle of $E_i(\el(\bla))$ gives rise to an abstract crystal structure on 
the Grothendieck group of $\bigoplus_{n\geq 0}\oh_c(G(\ell,1,n))$, 
isomorphic to the $\sle$-crystal of the Fock space $\cF_\bs$ (explained in Section \ref{chapeaucrystal}), where $\bs$ is determined from $c$ by the formulas of Section \ref{RCAparams2}.
Note that this isomorphism has been made explicit by Losev \cite[Theorem 5.1]{Losev2013}: 
provided one works with Uglov's realization of the Fock space crystal, it is simply given by the correspondence $[\el(\bla)]\leftrightarrow \bla$, i.e.
if $\tilde{F_i}$ denotes $\mathrm{head}(F_i(-))$ and $\tilde{E_i}$ denotes $\mathrm{socle}(E_i(-))$, we have
$$ \tilde{F_i}(\el(\bla)) = L(\tf_i(\bla)) \mand \tilde{E_i}(\el(\bla)) = L(\te_i(\bla)),$$
where $\tf_i$ and $\te_i$ are the Kashiwara crystal operators on $\cF_\bs$ of Section \ref{chapeaucrystal}.

\subsection{The functor $A_\si$ and the $\slinf$-crystal}\label{amu}
We summarize the construction of the $\slinf$-crystal on $\mathcal{F}_\mbs$ by Losev in \cite[Section 5.1]{Losev2015}, who adapts the results of Shan and Vasserot in \cite[Section 5.6]{ShanVasserot2012} to the language of crystals. Shan and Vasserot define an exact functor from  $\oh_c(G(\ell,1,n-em))$ to $\oh_c(G(\ell,1,n))$, by: 
$$A_\si(M)=\Ind_{G(\ell,1,n-em)\times S_{em}}^{G(\ell,1,n)} M\boxtimes \el(e\si)$$
for $\si$ a partition of $m$ and $em\leq n$ (see \cite[Definition 5.12]{ShanVasserot2012}, note that they denote $A_\sigma$ by $a^*_\sigma$). Shan and Vasserot showed that the functor $A_\si$ gives rise to an exact functor on $\oplus_n \oh_c(G(\ell,1,n))$ and that $A_\sigma$ commutes with the $i$-induction and $i$-restriction functors $F_i$ and $E_i$ \cite[Proposition 5.15]{ShanVasserot2012}.

There is a bijection $$\{\hbox{partitions}\}\times\{\ell\hbox{-partitions }\bla\;|\;p(\bla)=0,\;q(\bla)=0\}\longrightarrow\{\ell\hbox{-partitions }\bmu\mid p(\bmu)=0\}$$ given by sending $(\sigma,\bla)$ to $\bmu=:\tilde{b}_\sigma(\bla)$ where $\el(\bmu)$ is the unique irreducible constituent (up to multiplicity) in the head of $A_\sigma(\bla)$ \cite[Section 5.6]{ShanVasserot2012},\cite[Section 5.1]{Losev2015}. Since the set of partitions has an $\slinf$-crystal structure, this bijection induces an $\slinf$-crystal structure on the set of $\ell$-partitions which have depth $0$ in the $\sle$-crystal. 
Losev then extends the crystal to all of $\mathcal{F}_\mbs$ by using the commutative diagram of \cite[Proposition 5.1]{Losev2015}.This gives a bijection as above: $$\{\hbox{partitions} \}\times\{\ell\hbox{-partitions }\bla\;|\;p(\bla)=p,\;q(\bla)=0\}\longrightarrow \{\ell\hbox{-partitions }\bmu\;|\;p(\bmu)=p\}$$
sending $(\sigma,\bla)$ to $\bmu=\tilde{b}_\sigma(\bla)$.
These bijections taken over all $p\geq 0$ then induce an $\slinf$-crystal structure on $\mathcal{F}_\mbs$, that commutes with the $\sle$-crystal.
The depth $|\bmu|$ of $\bmu$ in the $\slinf$-crystal is equal to $q(\bmu)$, determining the other part of the support of the simple module $\el(\bmu)\in\oh_c(G(\ell,1,n))$ \cite{Losev2015}.

\subsection{Finite-dimensional representations of $\mathsf{H}_c(G(\ell,1,n))$}\label{findim}
When $W=S_n$, it is well-known that $\oh_c(S_n)$ contains a finite-dimensional module if and only if $c=\pm r/n$ with $r\in\N$ coprime to $n$, and then there is exactly one finite-dimensional simple module, $\el(\triv)$ if $c>0$ and $\el(\mathrm{Sign})$ if $c<0$ \cite{BEG}. The category $\oh$ for a direct product of groups is the tensor product of the categories, and thus it follows that any $\el(\lambda)\in\oh_c(S_n)$ when $c=r/e>0$ with $r$ coprime to $e$ and $2\leq e\leq n$ appears as a direct summand of the head of $\Ind_{W'}^{S_n}\el(\triv^{\times m})$ where $W'=S_e^{\times m}$ for some $0\leq m\leq \left\lfloor\frac{n}{e}\right\rfloor$ \cite{BezrukavnikovEtingof}. 
Wilcox solved the problem of finding the support of any simple $\el(\lambda)\in\oh_c(S_n)$ \cite{Wilcox}.

In general, a module $\el\in\oh_c(W)$ is finite-dimensional if and only if its support is $0$. By the results of Shan, Shan and Vasserot, and Losev explained in the preceding paragraphs, this means that the simple module $\el(\bla)\in\oh_c(G(\ell,1,n))$ is finite-dimensional if and only if $\bla$ has depth $0$ in both the $\sle$- and $\slinf$-crystals. The crystal structures depend on the Fock space charge $(e,\mbs)$, equivalently, on the parameter $c$, so the classification of finite-dimensional representations also depends on $c$. We can find the cuspidal support $(W',\el')$ of $\el(\bla)\in\oh_c(W)$ by finding $W'$ using the formula in Section \ref{support}, and finding $\el'=\el(\tilde{\bla}^\circ\times\triv)\in\oh_c(G(\ell,1,n-eq-p)\times (S_e^{\times q}))$ by first finding the source vertex $\tilde{\bla}$ of the $\sle$-crystal component containing $\bla$, and then finding the source vertex $\tilde{\bla}^\circ$ of the $\slinf$-crystal component containing $\tilde{\bla}$.

\section{The rule for the arrows in the $\slinf$-crystal}\label{S-crystal edges}
\subsection{Quasiperiods, fore periods, and aft periods}

\begin{definition}\label{defquasiperiod}
Let $\cA$ be an $\ell$-abacus.
An \textit{$e$-quasiperiod} in $\cA$ is an ordered set of $e$ beads $\{b_1,\dots,b_e\}$ of $\cA$ such that that, if $b_i=(\be_i,j_i)$ for $i=1\dots e$, 
the following two conditions holds:
\begin{itemize}
 \item $\be_{i+1}=\be_i-1$ for all $i=1,\dots, e-1$.
 \item $j_i \geq j_{i+1}$ for all $i=1,\dots, e-1$.
 \end{itemize}
\end{definition}

\begin{definition} 
The abacus $\cA$ is totally $e$-quasiperiodic if the set of beads of $\cA$ 
can be partitioned into $e$-quasiperiods.
\end{definition}
In particular, $e$-periods (see Definition \ref{defperiodJL}) are $e$-quasiperiods. 
When $e$ is clear from the context, we will simply write \textit{periods} for $e$-periods and \textit{quasiperiods} for $e$-quasiperiods.
Likewise, we might simply talk about totally periodic/quasiperiodic abaci.

\medskip

Recall the total order on beads defined in Section \ref{chapeaucrystal}. The lexicographic extension of that order to quasiperiods is a total order on the set of quasiperiods of $\cA$. 
If $\cA$ is a totally quasiperiodic abacus with a chosen set of quasiperiods $\tilde{P}_1,\tilde{P}_2,\dots$ partitioning its beads, the quasiperiods $\tilde{P}_1,\tilde{P}_2,\dots$ may therefore be enumerated in such a way that $\tilde{P}_k>\tilde{P}_{k'}$ for all $k<k'$. 

\begin{notation}Given a totally ordered set of quasiperiods $\tilde{P}_k$ of $\cA$, $k\in\N$, write $\{b_k^{(m)}\}$ for the beads of $\tilde{P}_k$, $m\in\{1,\dots,e\}$, ordered so that $b_k^{(m)}>b_k^{(m+1)}$ for each $m\leq e-1.$
\end{notation}

\begin{lemma}\label{lemqper=>per}
An abacus $\cA$ is totally quasiperiodic if and only if it is totally periodic.
\end{lemma}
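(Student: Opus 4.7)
The implication $(\Leftarrow)$ is immediate: every $e$-period is an $e$-quasiperiod, so the sequence of periods obtained by peeling $\cA$ down to some $\cA(\bemp, \br)$, together with the evident horizontal quasi-partition of the infinite tail of $\cA(\bemp, \br)$, gives a quasi-partition of $\cA$. For $(\Rightarrow)$, I fix a quasi-partition $\Pi$ of $\cA$. The plan is to establish the two claims (a) the first $e$-period $P_1$ of $\cA$ (in the sense of Definition \ref{defperiodJL}) exists, and (b) $\cA \setminus P_1$ is again totally $e$-quasiperiodic; then I iterate, noting that each peeling strictly decreases the size $|\bla|$ of the associated $\ell$-partition, so the process terminates at some $\cA(\bemp, \br')$, exhibiting $\cA$ as totally $e$-periodic.

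For (a), I build $P_1 = (b_1, \ldots, b_e)$ greedily, setting $\beta_1 = \max\{\beta : (\beta, j) \in \cA\}$, $j_i = \min\{j : (\beta_1 - i + 1, j) \in \cA\}$, and $b_i = (\beta_1 - i + 1, j_i)$. To justify that $b_i$ is well-defined and that $j_i \leq j_{i-1}$ for $i = 2, \ldots, e$, I induct on $i$: the bead $b_{i-1}$ lies in some quasiperiod $Q \in \Pi$ at a position $p$ within $Q$, and $p \leq i - 1 \leq e - 1$ since the first bead of $Q$ has $\beta$-coordinate at most $\beta_1$. Hence $Q$ contains a $(p+1)$-st bead at column $\beta_1 - i + 1$ with row at most $j_{i-1}$ (by the non-increasing row condition on $Q$), which simultaneously yields the existence of a bead in that column and the bound $j_i \leq j_{i-1}$.

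Step (b) is the main obstacle. My plan is to modify $\Pi$ into a new quasi-partition $\Pi'$ of $\cA$ containing $P_1$ as a member, so that $\Pi' \setminus \{P_1\}$ partitions $\cA \setminus P_1$ into quasiperiods. I start with the top quasiperiod $\tilde{P}_1$ of $\Pi$ in the lexicographic order; it automatically begins with $b_1$, since $b_1$ is maximal in the standard bead order. Then, iterating through $i = 2, \ldots, e$: whenever the $i$-th bead of the current top quasiperiod differs from $b_i$, I swap it with $b_i$, which lies in some other quasiperiod $R$. Such a swap can break the non-increasing row condition at a subsequent position of $\tilde{P}_1$ or $R$, which would then be repaired by a cascade of further swaps lower in the affected quasiperiods. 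The delicate point, and the hardest part of the proof, is to verify that this cascade always terminates and yields legitimate quasiperiods; this requires careful bookkeeping showing that each successive swap strictly moves beads into lower positions of the affected quasiperiods, and that no previously aligned bead of $\tilde{P}_1$ gets disturbed along the way.
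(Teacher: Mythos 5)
Your easy direction and your step (a) are fine; in fact the greedy existence argument for $P_1$ (locating $b_{i-1}$ at position $p\leq i-1$ of its quasiperiod and using the $(p+1)$-st bead of that quasiperiod to produce a bead in the next column at a row $\leq j_{i-1}$) is clean and correct, and is a nice way to isolate the existence of the first period. The overall strategy for step (b) — repair the given quasi-partition by exchanging beads between the top quasiperiod and the quasiperiods containing the $b_i$ — is also the strategy the paper uses. But your step (b) is a plan, not a proof, and it is exactly where the entire difficulty of the lemma lives. You write that the cascade of swaps ``would then be repaired by further swaps'' and that verifying termination and legitimacy ``requires careful bookkeeping,'' and then you stop. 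That bookkeeping is the theorem.

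Concretely, two things are missing and neither is routine. First, a single-bead swap at position $i$ of $\tilde P_1$ with the bead $b_i$ sitting at position $n$ of another quasiperiod $R$ can break $R$ \emph{above} the swapped position: one needs $j(r_{n-1})\geq j(c_i)$ where $c_i$ is the (higher) bead being pushed into $R$, and this can fail unless you swap at the \emph{largest} offending bead — the paper's proof explicitly chooses the maximal bead $(\beta,j)$ lying below a bead of $\tilde P_1$ in its column precisely so that no bead at column $\beta+1$ lies below the corresponding bead of $\tilde P_1$, which is what forces $j(r_{n-1})>j(c_{i-1})\geq j(c_i)$. Your iteration order happens to hit this position first, but you never state or use the maximality, and without it the junction inequality has no justification. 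Second, you need a monotone quantity guaranteeing the cascade terminates; the paper swaps a whole maximal interval of $s$ ``out of order'' beads in one step (rather than one bead with a cascade), verifies the four resulting junction inequalities, and observes that the number of initial beads of the top quasiperiod that are minimal in their columns strictly increases, bounding the process by $e-1$ iterations. Until you supply the junction inequalities and the termination argument — and record that the final partition into quasiperiods has $P_1$ as a block, so that $\cA\setminus P_1$ is again totally quasiperiodic — the proof is incomplete.
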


\begin{proof}It is clear that if $\cA$ is totally periodic then $\cA$ is totally quasiperiodic. 

Conversely, assume that $\cA$ is totally quasiperiodic. 
We will describe an algorithm that constructs the first period $P_1$ of $\cA$ step by step in such a way that the abacus at each step remains totally quasiperiodic. 
The following picture illustrates the process:

\begin{align*}
&\TikZ{[scale=.5]
\draw
(1,0)node[fill,circle,inner sep=3pt]{}
(2,0)node[fill,circle,inner sep=3pt]{}
(3,2)node[fill,circle,inner sep=3pt]{}
(4,2)node[fill,circle,inner sep=3pt]{}
(5,2)node[fill,circle,inner sep=3pt]{}
(6,4)node[fill,circle,inner sep=3pt]{}
(7,6)node[fill,circle,inner sep=3pt]{}
;
\draw[very thick] plot [smooth,tension=.1] coordinates{(1,0)(2,0)(3,2)(5,2)(6,4)(7,6)};
\draw
(1,3)node[fill,circle,inner sep=3pt]{}
(2,3)node[fill,circle,inner sep=3pt]{}
(3,3)node[fill,circle,inner sep=3pt]{}
(4,4)node[fill,circle,inner sep=3pt]{}
(5,5)node[fill,circle,inner sep=3pt]{}
(6,5)node[fill,circle,inner sep=3pt]{}
(7,5)node[fill,circle,inner sep=3pt]{}
;
\draw[very thick] plot [smooth,tension=.1] coordinates{(1,3)(3,3)(5,5)(7,5)};
\draw
(2,1)node[fill,circle,inner sep=3pt]{}
(3,1)node[fill,circle,inner sep=3pt]{}
(4,1)node[fill,circle,inner sep=3pt]{}
(5,3)node[fill,circle,inner sep=3pt]{}
(6,7)node[fill,circle,inner sep=3pt]{}
(7,7)node[fill,circle,inner sep=3pt]{}
(8,7)node[fill,circle,inner sep=3pt]{};
\draw[very thick] plot [smooth, tension=.1] coordinates{(2,1)(4,1)(5,3)(6,7)(7,7)(8,7)};
}
\longrightarrow\;
\TikZ{[scale=.5]
\draw
(1,0)node[fill,circle,inner sep=3pt]{}
(2,0)node[fill,circle,inner sep=3pt]{}
(3,2)node[fill,circle,inner sep=3pt]{}
(4,2)node[fill,circle,inner sep=3pt]{}
(5,2)node[fill,circle,inner sep=3pt]{}
(6,4)node[fill,circle,inner sep=3pt]{}
(7,6)node[fill,circle,inner sep=3pt]{}
;
\draw[very thick] plot [smooth,tension=.1] coordinates{(1,0)(2,0)(3,2)(5,2)(6,4)(7,6)};
\draw
(1,3)node[fill,circle,inner sep=3pt]{}
(2,3)node[fill,circle,inner sep=3pt]{}
(3,3)node[fill,circle,inner sep=3pt]{}
(4,4)node[fill,circle,inner sep=3pt]{}
(5,5)node[fill,circle,inner sep=3pt]{}
(6,5)node[blue,fill,circle,inner sep=3pt]{}
(7,5)node[blue,fill,circle,inner sep=3pt]{}
;
\draw[very thick] plot [smooth,tension=.1] coordinates{(1,3)(3,3)(5,5)(6,7)(7,7)};
\draw
(2,1)node[fill,circle,inner sep=3pt]{}
(3,1)node[fill,circle,inner sep=3pt]{}
(4,1)node[fill,circle,inner sep=3pt]{}
(5,3)node[fill,circle,inner sep=3pt]{}
(6,7)node[blue,fill,circle,inner sep=3pt]{}
(7,7)node[blue,fill,circle,inner sep=3pt]{}
(8,7)node[fill,circle,inner sep=3pt]{};
\draw[very thick] plot [smooth, tension=.1] coordinates{(2,1)(4,1)(5,3)(6,5)(7,5)(8,7)};
}
\longrightarrow\;
\TikZ{[scale=.5]
\draw
(1,0)node[fill,circle,inner sep=3pt]{}
(2,0)node[fill,circle,inner sep=3pt]{}
(3,2)node[fill,circle,inner sep=3pt]{}
(4,2)node[fill,circle,inner sep=3pt]{}
(5,2)node[fill,circle,inner sep=3pt]{}
(6,4)node[blue,fill,circle,inner sep=3pt]{}
(7,6)node[fill,circle,inner sep=3pt]{}
;
\draw[very thick] plot [smooth,tension=.1] coordinates{(1,0)(2,0)(3,2)(5,2)(6,5)(7,6)};
\draw
(1,3)node[fill,circle,inner sep=3pt]{}
(2,3)node[fill,circle,inner sep=3pt]{}
(3,3)node[fill,circle,inner sep=3pt]{}
(4,4)node[fill,circle,inner sep=3pt]{}
(5,5)node[fill,circle,inner sep=3pt]{}
(6,5)node[blue,fill,circle,inner sep=3pt]{}
(7,5)node[fill,circle,inner sep=3pt]{}
;
\draw[very thick] plot [smooth,tension=.1] coordinates{(1,3)(3,3)(5,5)(6,7)(7,7)};
\draw
(2,1)node[fill,circle,inner sep=3pt]{}
(3,1)node[fill,circle,inner sep=3pt]{}
(4,1)node[fill,circle,inner sep=3pt]{}
(5,3)node[fill,circle,inner sep=3pt]{}
(6,7)node[fill,circle,inner sep=3pt]{}
(7,7)node[fill,circle,inner sep=3pt]{}
(8,7)node[fill,circle,inner sep=3pt]{};
\draw[very thick] plot [smooth, tension=.1] coordinates{(2,1)(4,1)(5,3)(6,4)(7,5)(8,7)};}
\longrightarrow
\\
&\longrightarrow\;
\TikZ{[scale=.5]
\draw
(1,0)node[fill,circle,inner sep=3pt]{}
(2,0)node[fill,circle,inner sep=3pt]{}
(3,2)node[fill,circle,inner sep=3pt]{}
(4,2)node[fill,circle,inner sep=3pt]{}
(5,2)node[blue,fill,circle,inner sep=3pt]{}
(6,4)node[fill,circle,inner sep=3pt]{}
(7,6)node[fill,circle,inner sep=3pt]{}
;
\draw[very thick] plot [smooth,tension=.1] coordinates{(1,0)(2,0)(3,2)(4,2)(5,3)(6,5)(7,6)};
\draw
(1,3)node[fill,circle,inner sep=3pt]{}
(2,3)node[fill,circle,inner sep=3pt]{}
(3,3)node[fill,circle,inner sep=3pt]{}
(4,4)node[fill,circle,inner sep=3pt]{}
(5,5)node[fill,circle,inner sep=3pt]{}
(6,5)node[fill,circle,inner sep=3pt]{}
(7,5)node[fill,circle,inner sep=3pt]{}
;
\draw[very thick] plot [smooth,tension=.1] coordinates{(1,3)(3,3)(5,5)(6,7)(7,7)};
\draw
(2,1)node[fill,circle,inner sep=3pt]{}
(3,1)node[fill,circle,inner sep=3pt]{}
(4,1)node[fill,circle,inner sep=3pt]{}
(5,3)node[blue,fill,circle,inner sep=3pt]{}
(6,7)node[fill,circle,inner sep=3pt]{}
(7,7)node[fill,circle,inner sep=3pt]{}
(8,7)node[fill,circle,inner sep=3pt]{};
\draw[very thick] plot [smooth, tension=.1] coordinates{(2,1)(4,1)(5,2)(6,4)(7,5)(8,7)};}
\longrightarrow\;
\TikZ{[scale=.5]
\draw
(1,0)node[fill,circle,inner sep=3pt]{}
(2,0)node[blue,fill,circle,inner sep=3pt]{}
(3,2)node[fill,circle,inner sep=3pt]{}
(4,2)node[fill,circle,inner sep=3pt]{}
(5,2)node[fill,circle,inner sep=3pt]{}
(6,4)node[fill,circle,inner sep=3pt]{}
(7,6)node[fill,circle,inner sep=3pt]{}
;
\draw[very thick] plot [smooth,tension=.1] coordinates{(1,0)(2,1)(3,2)(4,2)(5,3)(6,5)(7,6)};
\draw
(1,3)node[fill,circle,inner sep=3pt]{}
(2,3)node[fill,circle,inner sep=3pt]{}
(3,3)node[fill,circle,inner sep=3pt]{}
(4,4)node[fill,circle,inner sep=3pt]{}
(5,5)node[fill,circle,inner sep=3pt]{}
(6,5)node[fill,circle,inner sep=3pt]{}
(7,5)node[fill,circle,inner sep=3pt]{}
;
\draw[very thick] plot [smooth,tension=.1] coordinates{(1,3)(3,3)(5,5)(6,7)(7,7)};
\draw
(2,1)node[blue,fill,circle,inner sep=3pt]{}
(3,1)node[fill,circle,inner sep=3pt]{}
(4,1)node[fill,circle,inner sep=3pt]{}
(5,3)node[fill,circle,inner sep=3pt]{}
(6,7)node[fill,circle,inner sep=3pt]{}
(7,7)node[fill,circle,inner sep=3pt]{}
(8,7)node[fill,circle,inner sep=3pt]{};
\draw[very thick] plot [smooth, tension=.1] coordinates{(2,0)(3,1)(4,1)(5,2)(6,4)(7,5)(8,7)};}
\end{align*}

Now we explain the algorithm. Let $\tilde{P}_1,\;\tilde{P}_2,\dots$ be a chosen set of quasiperiods partitioning the beads of $\cA$ with $\tilde{P}_k>\tilde{P}_{k'}$ for all $k<k'$. Then $b_1^{(1)}$ is the maximal bead in $\cA$. If $\tilde{P}_1$ is a period, we are done. Otherwise, if $\tilde{P}_1$ is not a period, then there is a bead $(\beta,j)\in\cA$ where $(\beta, j')\in \tilde{P}_1$ and $j<j'$. Pick the largest bead $(\beta,j)$ such that this happens. Since $\cA$ is totally quasiperiodic, $(\beta,j)$ belongs to a quasiperiod $\tilde{P}_k$ for some $k\neq 1$. Write $(\beta,j')=b_1^{(m)}$ and $(\beta,j)=b_k^{(n)}$ as above (i.e. $(\beta,j')$ is the $m$'th bead of $\tilde{P}_1$ and $(\beta,j)$ is the $n$'th bead of $\tilde{P}_k$). Observe that $m\geq n$ since $\beta(b_k^{(1)})\leq\beta(b_1^{(1)})$ due to $b_1^{(1)}$ being the maximal bead of $\cA$.
Let $s\in\Z$, $0<s\leq e-m$ be minimal such that $b_1^{(m+s)}$ is in a lower row than $b_k^{(n+s)}$, if such an $s$ exists; otherwise, set $s=e-m+1$. Now set $$\tilde{P}'_1:=\{b_1^{(1)},b_1^{(2)},\dots,b_1^{(m-1)},b_k^{(n)},b_k^{(n+1)},\dots,b_k^{(n+s-1)},b_1^{(m+s)},b_1^{(m+s+1)},\dots b_1^e\}$$
$$\tilde{P}'_k:=\{b_k^{(1)},b_k^{(2)},\dots,b_k^{(n-1)},b_1^{(m)},b_1^{(m+1)},\dots,b_1^{(m+s-1)},b_k^{(n+s)},b_k^{(n+s+1)},\dots,b_k^{(e)}\}$$
Thus we exchange an interval of $s$ beads which are ``out of order" between the two quasiperiods $\tilde{P}_1$ and $\tilde{P}_k$. 

We must check that $\tilde{P}_1'$ and $\tilde{P}_k'$ are quasiperiods. It is clear from the construction that the $\beta$-numbers of the successive elements of $\tilde{P}_1'$ are successive elements of $\N$, and likewise for $\tilde{P}_k'$. 
Thus it suffices to check that $j(b_1^{(m-1)})\geq j(b_k^{(n)})$, $j(b_k^{(n-1)})\geq j(b_1^{(m)})$, $j(b_k^{(n+s-1)})\geq j(b_1^{(m+s)})$, and $j(b_1^{(m+s-1)})\geq j(b_k^{(n+s)})$. We have $j(b_1^{(m-1)})\geq j(b_1^{(m)})>j(b_k^{(n)})$ where the first inequality is because $\tilde{P}_1$ is a quasiperiod and the second inequality is by the definitions of $m,n$; likewise $j(b_k^{(n-1)})> j(b_1^{(m-1)})\geq j(b_1^{(m)})$, where the first inequality is because $\beta(b_k^{(n-1)})=\beta(b_1^{(m-1)})=\beta+1$ and by our assumption on the maximality of $(\beta,j)$, for all $b\in\tilde{P}_1$ with $\beta(b)>\beta$ there is no bead $b'\in\cA$ with $\beta(b')=\beta(b)$ and $j(b')<j(b)$. The remaining two inequalities are checked similarly.

It follows that $\tilde{P}_1',\tilde{P}_2,\dots,\tilde{P}_{k-1},\tilde{P}_k',\tilde{P}_{k+1},\dots$ is a partitioning 
of the beads of $\cA$ into quasiperiods, of which $\tilde{P}_1'$ is the maximal quasiperiod since it contains the maximal bead $b_1^{(1)}$ of $\cA$. Moreover, if $b$ is among the first $m$ beads of $\tilde{P}_1'$, $b$ satisfies the condition that if $b'\in\cA$, $b'\neq b$ and $\beta(b')=\beta(b)$, then $j(b')>j(b)$. Now iterate the procedure on $\tilde{P}_1'$ with respect to 
this new partitioning, and so on. With each iteration, the number $M$ such that the first $M$ beads of the maximal quasiperiod belong to the minimal row among beads in that column, increases by at least $1$. Thus the process terminates after at most $e-1$ iterations with the construction of a partitioning of the beads of $\cA$ into quasiperiods such that the maximal quasiperiod is a period, $P_1$. Moreover, by construction $\cA\setminus P_1$ is totally quasiperiodic.

Iterating on $\cA\setminus P_1$ to construct $P_2$,
and so on, 
it follows that $\cA$ is totally periodic.
\end{proof}

We now define some new combinatorial notions which will be used in Theorem \ref{thmedge}:
\begin{definition}\label{defperiods} Let $\cA$ be an $\ell$-abacus.
\begin{enumerate}
 \item\label{fore} The \textit{first fore period} $P_1$ is the largest quasiperiod of $\cA$. Inductively, the \textit{$k$'th fore period} $P_k$ is the largest quasiperiod of $\cA$ satisfying $P_k<P_{k-1}$ and $P_k\cap P_i=\emp$ for all $i<k$.
\item A bead of $\cA$ which does not belong to any fore period of $\cA$ is called a \textit{free bead}.
\item\label{vessel} 
For each $k\in\N$, let $P_k$ be the $k$'th fore period of $\cA$ and $b_k^{(i)}$ be the $i$'th bead of $P_k$.
The $k$'th \textit{vessel} $V_k$ of $\cA$ is the union of $P_k$ and the set of free beads $b=(\be,j)$ of $\cA$, $\beta(b_{k+1}^{(e)})\leq\be\leq\beta(b_k^{(1)})$, satisfying: 
\begin{enumerate}
 \item if $\beta=\beta(b_k^{(i)})$ for some $i\in\{1,\dots,e\}$ then $j>j(b_k^{(i)})$
 \item if $\beta=\beta(b_{k+1}^{(i)})$ for some $i\in\{1,\dots,e\}$ then $j<j(b_{k+1}^{(i)})$
 \item if $\beta<\beta(b_k^{(e)})$ then there exists $j'$ such that $(\beta+1,j')\in V_k$.
\end{enumerate}
Informally, we think of these conditions as saying that $b$ is ``between" $P_k$ and $P_{k+1}$ and $V_k$ is ``connected."
\item The $k$'th \textit{aft period} $Q_k$ is the minimal quasiperiod in the $k$'th vessel of $\cA$. 
\item A bead of $\cA$ that does not belong to a vessel is called \textit{adrift}.
\end{enumerate}
\end{definition}
Definition \ref{defperiods} (\ref{fore}) adapts \cite[Definition 5.5]{Gerber2016a}, which should read $(\cup_{i=1}^{k-1}X_i)\cap X=\emp$ rather than $X_{k-1}\cap X=\emp$.

\begin{remark}\label{remforeaft}
Note that fore and aft periods are not, in general, periods in the sense of \cite{JaconLecouvey2012}.
When an abacus is totally $e$-periodic, there are no free beads, and thus the fore periods and the aft periods coincide. 
In this case they also coincide with Definition \ref{defperiodJL}, and
we will simply refer to them as the \textit{periods} of $\cA$.
\end{remark}

\begin{example}\label{rank4example}
Let $e=3$ and $\ell=4$. The fore periods of $\cA$ are drawn in red, the aft periods of $\cA$ are drawn in blue when they differ from the fore periods. The first vessel consists of the rightmost three beads linked in red which form $P_1$, the rightmost three beads linked in blue which form $Q_1$, and two more beads -- the free bead one unit below and left of $P_1$, and the free bead above the middle of $Q_1$. There are two beads that are adrift, below and right of $P_1$.
$$\dots\quad
\TikZ{[scale=.5]
\draw
(11,3)node[fill,circle,inner sep=.5pt]{}
(10,3)node[fill,circle,inner sep=.5pt]{}
(9,3)node[fill,circle,inner sep=3pt]{}
(8,3)node[fill,circle,inner sep=.5pt]{}
(7,3)node[fill,circle,inner sep=3pt]{}
(6,3)node[fill,circle,inner sep=.5pt]{}
(5,3)node[fill,circle,inner sep=3pt]{}
(4,3)node[fill,circle,inner sep=3pt]{}
(3,3)node[fill,circle,inner sep=.5pt]{}
(2,3)node[fill,circle,inner sep=.5pt]{}
(1,3)node[fill,circle,inner sep=3pt]{}
(0,3)node[fill,circle,inner sep=.5pt]{}
(-1,3)node[fill,circle,inner sep=.5pt]{}
(-2,3)node[fill,circle,inner sep=3pt]{}
(-3,3)node[fill,circle,inner sep=.5pt]{}
(-4,3)node[fill,circle,inner sep=.5pt]{}
(-5,3)node[fill,circle,inner sep=.5pt]{}
(-6,3)node[fill,circle,inner sep=.5pt]{}
(-7,3)node[fill,circle,inner sep=3pt]{}
(-8,3)node[fill,circle,inner sep=3pt]{}
(11,2)node[fill,circle,inner sep=.5pt]{}
(10,2)node[fill,circle,inner sep=.5pt]{}
(9,2)node[fill,circle,inner sep=.5pt]{}
(8,2)node[fill,circle,inner sep=3pt]{}
(7,2)node[fill,circle,inner sep=3pt]{}
(6,2)node[fill,circle,inner sep=3pt]{}
(5,2)node[fill,circle,inner sep=.5pt]{}
(4,2)node[fill,circle,inner sep=.5pt]{}
(3,2)node[fill,circle,inner sep=3pt]{}
(2,2)node[fill,circle,inner sep=3pt]{}
(1,2)node[fill,circle,inner sep=3pt]{}
(0,2)node[fill,circle,inner sep=3pt]{}
(-1,2)node[fill,circle,inner sep=.5pt]{}
(-2,2)node[fill,circle,inner sep=3pt]{}
(-3,2)node[fill,circle,inner sep=3pt]{}
(-4,2)node[fill,circle,inner sep=3pt]{}
(-5,2)node[fill,circle,inner sep=.5pt]{}
(-6,2)node[fill,circle,inner sep=.5pt]{}
(-7,2)node[fill,circle,inner sep=3pt]{}
(-8,2)node[fill,circle,inner sep=3pt]{}
(11,1)node[fill,circle,inner sep=.5pt]{}
(10,1)node[fill,circle,inner sep=.5pt]{}
(9,1)node[fill,circle,inner sep=3pt]{}
(8,1)node[fill,circle,inner sep=.5pt]{}
(7,1)node[fill,circle,inner sep=3pt]{}
(6,1)node[fill,circle,inner sep=.5pt]{}
(5,1)node[fill,circle,inner sep=.5pt]{}
(4,1)node[fill,circle,inner sep=.5pt]{}
(3,1)node[fill,circle,inner sep=.5pt]{}
(2,1)node[fill,circle,inner sep=3pt]{}
(1,1)node[fill,circle,inner sep=.5pt]{}
(0,1)node[fill,circle,inner sep=3pt]{}
(-1,1)node[fill,circle,inner sep=3pt]{}
(-2,1)node[fill,circle,inner sep=.5pt]{}
(-3,1)node[fill,circle,inner sep=.5pt]{}
(-4,1)node[fill,circle,inner sep=.5pt]{}
(-5,1)node[fill,circle,inner sep=3pt]{}
(-6,1)node[fill,circle,inner sep=3pt]{}
(-7,1)node[fill,circle,inner sep=3pt]{}
(-8,1)node[fill,circle,inner sep=3pt]{}
(11,0)node[fill,circle,inner sep=.5pt]{}
(10,0)node[fill,circle,inner sep=3pt]{}
(9,0)node[fill,circle,inner sep=.5pt]{}
(8,0)node[fill,circle,inner sep=.5pt]{}
(7,0)node[fill,circle,inner sep=.5pt]{}
(6,0)node[fill,circle,inner sep=3pt]{}
(5,0)node[fill,circle,inner sep=.5pt]{}
(4,0)node[fill,circle,inner sep=3pt]{}
(3,0)node[fill,circle,inner sep=3pt]{}
(2,0)node[fill,circle,inner sep=.5pt]{}
(1,0)node[fill,circle,inner sep=.5pt]{}
(0,0)node[fill,circle,inner sep=.5pt]{}
(-1,0)node[fill,circle,inner sep=.5pt]{}
(-2,0)node[fill,circle,inner sep=3pt]{}
(-3,0)node[fill,circle,inner sep=.5pt]{}
(-4,0)node[fill,circle,inner sep=.5pt]{}
(-5,0)node[fill,circle,inner sep=3pt]{}
(-6,0)node[fill,circle,inner sep=3pt]{}
(-7,0)node[fill,circle,inner sep=3pt]{}
(-8,0)node[fill,circle,inner sep=3pt]{}
;

\draw[very thick,red] plot [smooth,tension=.1] coordinates{(9,3)(8,2)(7,1)};
\draw[very thick,blue] plot [smooth,tension=.1] coordinates{(8,2)(7,2)(6,2)};
\draw[very thick,red] plot [smooth,tension=.1] coordinates{(5,3)(4,0)(3,0)};
\draw[very thick,red] plot [smooth,tension=.1] coordinates{(4,3)(3,2)(2,1)};
\draw[very thick,red] plot [smooth,tension=.1] coordinates{(2,2)(1,2)(0,1)};
\draw[very thick,red] plot [smooth,tension=.1] coordinates{(1.1,3)(0.1,2)(-0.9,1)};
\draw[very thick,blue] plot [smooth,tension=.1] coordinates{(0,2)(-1,1)(-2,0)};
\draw[very thick,red] plot [smooth,tension=.1] coordinates{(-2,1.9)(-3,1.9)(-4,1.9)};
\draw[very thick,blue] plot [smooth,tension=.1] coordinates{(-2,3)(-3,2)(-4,2)};
\draw[very thick,red] plot [smooth,tension=.1] coordinates{(-5,1)(-6,1)(-7,1)};
\draw[very thick,red] plot [smooth,tension=.1] coordinates{(-5,0)(-6,0)(-7,0)};
\draw[very thick,red] plot [smooth,tension=.1] coordinates{(-7,2)(-8,0)};
\draw[very thick,red, dashed] plot [smooth,tension=.1] coordinates{(-8,0)(-9,0)};
\draw[very thick,red] plot [smooth,tension=.1] coordinates{(-7,3)(-8,1)};
\draw[very thick,red, dashed] plot [smooth,tension=.1] coordinates{(-8,1)(-9,1)};
\draw[very thick,red, dashed] plot [smooth,tension=.1] coordinates{(-8,2)(-9,2)};
\draw[very thick,red, dashed] plot [smooth,tension=.1] coordinates{(-8,3)(-9,3)};

}\quad\dots
$$
\end{example}

\begin{lemma}\label{lemvessels}
Let $V_k$ and $V_{k'}$ be vessels in an abacus $\cA$, $k\neq k'$. Then $V_k\cap V_{k'} = \emp$.
\end{lemma}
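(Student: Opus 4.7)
The plan is to argue by contradiction: assume $b \in V_k \cap V_{k'}$ with $k<k'$. Since fore periods are pairwise disjoint by Definition~\ref{defperiods}(\ref{fore}) and free beads lie in no fore period, $b$ must be a free bead satisfying the conditions of Definition~\ref{defperiods}(\ref{vessel}) for both vessels.

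My first step would be to pin down the column-range restriction. The strict lex ordering $P_k>P_{k+1}$ combined with $P_k \cap P_{k+1} = \emp$ forces the top beads to satisfy $b_k^{(1)}>b_{k+1}^{(1)}$ in the bead order, so $\beta(b_k^{(1)})\geq \beta(b_{k+1}^{(1)})$; iterating, $\beta(b_m^{(1)})$ is weakly decreasing in $m$. Intersecting the column ranges of $V_k$ and $V_{k'}$ and using this monotonicity confines
\[
\beta(b) \in [\beta(b_{k+1}^{(e)}),\beta(b_{k+1}^{(1)})],
\]
the column range of $P_{k+1}$. Hence $\beta(b)=\beta(b_{k+1}^{(i)})$ for some $i\in\{1,\dots,e\}$, and condition~(b) of $V_k$ applied at $P_{k+1}$ yields $j(b)<j(b_{k+1}^{(i)})$, i.e.\ $b>b_{k+1}^{(i)}$ in the bead order.

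For the adjacent case $k'=k+1$ the contradiction is immediate: condition~(a) of $V_{k+1}$ gives the opposite inequality $j(b)>j(b_{k+1}^{(i)})$.

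For the non-adjacent case $k'>k+1$, my strategy is to contradict the maximality of $P_{k+1}$ among quasiperiods strictly less than $P_k$ and disjoint from $P_1,\dots,P_k$. Consider the candidate
\[
P'=(P_{k+1}\setminus\{b_{k+1}^{(i)}\})\cup\{b\},
\]
which occupies the same $e$ consecutive columns as $P_{k+1}$. Then $P'$ is disjoint from $P_1,\dots,P_k$ (as $b$ is free), remains lex-less than $P_k$ (the comparison $P_{k+1}<P_k$ is decided at some position strictly before $i$ since $P_{k+1}\cap P_k=\emp$, and $P'$ agrees with $P_{k+1}$ there), and is lex-strictly greater than $P_{k+1}$ (they agree before position $i$, while at position $i$ we have $b>b_{k+1}^{(i)}$). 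So if $P'$ is a quasiperiod, its existence contradicts the defining maximality of $P_{k+1}$. The row condition at position $i-1$ is automatic from $j(b_{k+1}^{(i-1)})\geq j(b_{k+1}^{(i)})>j(b)$. The main obstacle is verifying the row condition at position $i+1$, namely $j(b)\geq j(b_{k+1}^{(i+1)})$. If it fails, I would apply the same column-range argument to $V_{k'}$ (yielding $\beta(b)=\beta(b_{k'}^{(i')})$ and, via condition~(a) of $V_{k'}$, $j(b)>j(b_{k'}^{(i')})$) and then either perform the analogous modification of $P_{k'}$ by replacing $b_{k'}^{(i')}$ with $b$, or, when both single-bead swaps fail, perform a multi-bead substitution borrowing beads from intermediate periods $P_{k+2},\dots,P_{k'}$ guided by the connectedness condition~(c) of $V_{k'}$, to produce a lex-larger quasiperiod violating the maximality of some $P_m$ with $k+1\leq m\leq k'$.
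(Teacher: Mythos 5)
Your setup is correct and coincides with the paper's: $b$ must be a free bead, and intersecting the column ranges of $V_k$ and $V_{k'}$ (using $\beta(b_{k'}^{(1)})\leq\beta(b_{k+1}^{(1)})$) forces $\beta(b)=\beta(b_{k+1}^{(i)})$ for some $i$, so condition (b) of Definition \ref{defperiods}(\ref{vessel}) for $V_k$ gives $j(b)<j(b_{k+1}^{(i)})$; the adjacent case $k'=k+1$ then closes exactly as you say. The gap is in the non-adjacent case: your exchange argument is not completed. You yourself identify that the row condition $j(b)\geq j(b_{k+1}^{(i+1)})$ may fail, and the proposed remedy --- ``a multi-bead substitution borrowing beads from intermediate periods'' --- is left entirely unexecuted. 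That substitution is precisely the hard part: one would have to splice the head of $P_{k+1}$, the bead $b$, and a tail of $P_{k'}$, verify all row conditions of a quasiperiod, re-check that the hybrid is still lex-less than $P_k$ when $i=1$ (your claim that the comparison with $P_k$ is decided ``strictly before $i$'' fails there), and treat $i'=e$ separately. As written, this is a genuine hole, not a routine verification.

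The frustrating part is that you already possess both halves of the short argument and only fail to combine them. The chain $\beta(b_{k'}^{(e)})\leq\beta(b_{k+1}^{(e)})\leq\beta(b)\leq\beta(b_{k'}^{(1)})$ shows that $\beta(b)$ lies in the column range of $P_{k'}$ itself, so condition (a) for $V_{k'}$ applies unconditionally (not merely as a fallback) and yields $j(b)>j(b_{k'}^{(i')})$ where $\beta(b)=\beta(b_{k'}^{(i')})$. The one additional fact the paper invokes is that, in a common column, the bead of the lex-larger period $P_{k+1}$ sits in a weakly lower row than that of $P_{k'}$, i.e. $j(b_{k+1}^{(i)})\leq j(b_{k'}^{(i')})$ (with equality only when $k'=k+1$). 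Then $j(b)<j(b_{k+1}^{(i)})\leq j(b_{k'}^{(i')})<j(b)$ is an immediate contradiction, with no modification of any fore period required. I would recommend abandoning the exchange construction and supplying this comparison instead.
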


\begin{proof}
Suppose by way of contradiction that $b=(\be,j)\in V_k\cap V_{k'}$ is a bead in the intersection of two different vessels. Since $P_k\cap P_{k'}=\emp$, and $V_k\setminus P_k$ consists of free beads, likewise $V_{k'}\setminus P_{k'}$, $b$ must be a free bead.
Without loss of generality, $k<k'$, so $P_{k'}<P_k$. We then have 
$$\beta(b_{k'}^{(e)}) \leq \beta(b_{k+1}^{(e)}) \leq \beta \leq \beta(b_{k'}^{(1)}) \leq \beta(b_{k+1}^{(1)}) $$
so $\beta=\beta(b_{k'}^{(i')})$ for some $i'\in\{1,\dots,e\}$ and also $\beta=\beta(b_{k+1}^{(i'')})$ for some $i''\in\{1,\dots,e\}$. On the one hand, Definition \ref{defperiods}(\ref{vessel})(b) then implies that $j<j(b_{k+1}^{(i'')})$, and on the other hand, Definition \ref{defperiods}(\ref{vessel})(a) implies that $j>j(b_{k'}^{(i')})$. But $j(b_{k+1}^{(i'')})\leq j(b_{k'}^{(i')})$ (with equality if and only if $k+1=k'$). So $j<j(b_{k+1}^{(i'')})\leq j(b_{k'}^{(i')})<j$, a contradiction.
\end{proof}

 \begin{definition} Let $P$ be a quasiperiod in $\cA$. Let $j_1,j_2,\dots j_s$ 
 be the distinct $j$'s such that $(\beta,j)\in P$ for some $\beta\in\Z$. For each $i=1,\dots,s$, let $\beta_i=\max\{\beta\;|\;(\beta,j_i)\in P\}$. If $(\beta_i+1,j_i)\notin \cA$ for all $i=1,\dots,s$ then we may define the \textit{right shift} of $P$ as $$\tilde{P}=\{(\beta+1,j)\;|\;(\beta,j)\in P \}$$ and we say that $P$ is \textit{right-shiftable}. Let $P$ be a right-shiftable quasiperiod. We say that we \textit{shift $P$ one unit to the right} if we replace $\cA$ with $(\cA\setminus P)\cup \tilde{P}$. The left versions of these definitions are similar.
 \end{definition}
 
 \begin{example}
 Let $|\bla,\mbs\rangle=|((1),(1^2)),(0,1)\rangle$ and $e=3$. Let $P=P_1$. Then  $P$ is both right- and left-shiftable. Here is $P$ being shifted to the right:
 $$
 \TikZ{[scale=.5]
 \draw
 (0,0)node[fill,circle,inner sep=3pt]{}
 (1,0)node[fill,circle,inner sep=3pt]{}
 (2,0)node[fill,circle,inner sep=.5pt]{}
 (3,0)node[fill,circle,inner sep=3pt]{}
 (4,0)node[fill,circle,inner sep=.5pt]{}
 (5,0)node[fill,circle,inner sep=.5pt]{}
 (6,0)node[fill,circle,inner sep=.5pt]{}
 (0,1)node[fill,circle,inner sep=3pt]{}
 (1,1)node[fill,circle,inner sep=3pt]{}
 (2,1)node[fill,circle,inner sep=.5pt]{}
 (3,1)node[fill,circle,inner sep=.5pt]{}
 (4,1)node[fill,circle,inner sep=3pt]{}
 (5,1)node[fill,circle,inner sep=3pt]{}
 (6,1)node[fill,circle,inner sep=.5pt]{}
 ;}
 \quad
 \longrightarrow
 \quad
 \TikZ{[scale=.5]
 \draw
 (0,0)node[fill,circle,inner sep=3pt]{}
 (1,0)node[fill,circle,inner sep=3pt]{}
 (2,0)node[fill,circle,inner sep=.5pt]{}
 (3,0)node[fill,circle,inner sep=.5pt]{}
 (4,0)node[fill,circle,inner sep=3pt]{}
 (5,0)node[fill,circle,inner sep=.5pt]{}
 (6,0)node[fill,circle,inner sep=.5pt]{}
 (0,1)node[fill,circle,inner sep=3pt]{}
 (1,1)node[fill,circle,inner sep=3pt]{}
 (2,1)node[fill,circle,inner sep=.5pt]{}
 (3,1)node[fill,circle,inner sep=.5pt]{}
 (4,1)node[fill,circle,inner sep=.5pt]{}
 (5,1)node[fill,circle,inner sep=3pt]{}
 (6,1)node[fill,circle,inner sep=3pt]{}
 ;}
 $$
 Here is $P$ being shifted to the left:
 $$
 \TikZ{[scale=.5]
 \draw
 (0,0)node[fill,circle,inner sep=3pt]{}
 (1,0)node[fill,circle,inner sep=3pt]{}
 (2,0)node[fill,circle,inner sep=.5pt]{}
 (3,0)node[fill,circle,inner sep=3pt]{}
 (4,0)node[fill,circle,inner sep=.5pt]{}
 (5,0)node[fill,circle,inner sep=.5pt]{}
 (6,0)node[fill,circle,inner sep=.5pt]{}
 (0,1)node[fill,circle,inner sep=3pt]{}
 (1,1)node[fill,circle,inner sep=3pt]{}
 (2,1)node[fill,circle,inner sep=.5pt]{}
 (3,1)node[fill,circle,inner sep=.5pt]{}
 (4,1)node[fill,circle,inner sep=3pt]{}
 (5,1)node[fill,circle,inner sep=3pt]{}
 (6,1)node[fill,circle,inner sep=.5pt]{}
 ;}
 \quad
 \longrightarrow
 \quad
 \TikZ{[scale=.5]
 \draw
 (0,0)node[fill,circle,inner sep=3pt]{}
 (1,0)node[fill,circle,inner sep=3pt]{}
 (2,0)node[fill,circle,inner sep=3pt]{}
 (3,0)node[fill,circle,inner sep=.5pt]{}
 (4,0)node[fill,circle,inner sep=.5pt]{}
 (5,0)node[fill,circle,inner sep=.5pt]{}
 (6,0)node[fill,circle,inner sep=.5pt]{}
 (0,1)node[fill,circle,inner sep=3pt]{}
 (1,1)node[fill,circle,inner sep=3pt]{}
 (2,1)node[fill,circle,inner sep=.5pt]{}
 (3,1)node[fill,circle,inner sep=3pt]{}
 (4,1)node[fill,circle,inner sep=3pt]{}
 (5,1)node[fill,circle,inner sep=.5pt]{}
 (6,1)node[fill,circle,inner sep=.5pt]{}
 ;}
 $$
 \end{example}
 
 \begin{lemma}\label{shiftsandvessels}
For an abacus $\cA$ and $k\in\N$, let $P_k$ be the $k$'th fore period of $\cA$, and assume $P_k$ is right-shiftable. 
Let $\tilde{P}_k$ be the right shift of $P_k$ and let $V_k$ be the $k$'th vessel of $\cA$.
\begin{enumerate}
 \item $(V_k\setminus P_k)\cup \tilde{P_k}$ contains a unique quasiperiod, which is equal to $\tilde{P}_k$.
\item Let $\cA'= (\cA\setminus P_k) \cup \tilde{P}_k$ be the abacus obtained by shifting $P_k$ one unit to the right.
For $a\in\N$, denote $P'_a$ (respectively $Q'_a$) the $a$'th fore (respectively aft) periods of $\cA'$.
Then
$\tilde{P_k}=Q_k'$ if and only if $\tilde{P}_{k}\cap P_a'=\emp$ for all $1\leq a\leq k-1$.
\end{enumerate}
\end{lemma}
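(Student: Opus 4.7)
For part (1), I would first note that $\tilde P_k$ is itself a quasiperiod: its $\beta$-values are the translates $\beta(b_k^{(i)})+1$, which remain consecutive, and its row-sequence $(j_1,\dots,j_e)$ is inherited from $P_k$ and hence weakly decreasing. For uniqueness, let $Q$ be an arbitrary quasiperiod contained in $(V_k\setminus P_k)\cup\tilde P_k$ and write its leading bead as $(\gamma_1,h_1)$, so $\gamma_1=\max\{\beta(b)\mid b\in Q\}$. Two observations cut down the possibilities: every free bead $b$ of $V_k$ satisfies $\beta(b)\le\beta(b_k^{(1)})$, and among the beads of $\tilde P_k$ only $\tilde b_k^{(1)}$ has $\beta$-value exceeding $\beta(b_k^{(1)})$. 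So either $\tilde b_k^{(1)}\in Q$, in which case $\gamma_1=\beta(b_k^{(1)})+1$ and a column-by-column check against Definition \ref{defperiods}(\ref{vessel})(a) (free beads in a column of $P_k$ sit strictly above the corresponding bead of $\tilde P_k$) forces $Q=\tilde P_k$ from the weak-decrease of rows; or $\gamma_1\le\beta(b_k^{(1)})$ and $Q$ is built entirely from free beads of $V_k$ together with beads of $\tilde P_k\setminus\{\tilde b_k^{(1)}\}$.

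In the second case I would derive a contradiction with the maximality property defining the fore periods. Since $V_k\setminus P_k$ consists only of free beads, and $\tilde P_k\setminus\{\tilde b_k^{(1)}\}$ lies in the same columns as $P_k\setminus\{b_k^{(e)}\}$ but shifted one row down, one checks using Definition \ref{defperiods}(\ref{vessel})(a)--(c) that such a $Q$ would already have been a quasiperiod of $\cA$ disjoint from $P_1,\dots,P_{k-1}$, lying strictly between $P_{k-1}$ and $P_{k+1}$ in the order on quasiperiods. This contradicts the choice of $P_k$ as the largest such quasiperiod.

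For part (2), the forward direction is short: if $\tilde P_k=Q_k'$ then $\tilde P_k\subseteq V_k'$, and by Lemma \ref{lemvessels} the vessels $V_a'$ are pairwise disjoint, so for $a<k$ we have $P_a'\subseteq V_a'$ disjoint from $V_k'$, giving $\tilde P_k\cap P_a'=\emptyset$. For the converse, assume $\tilde P_k\cap P_a'=\emptyset$ for every $a<k$. The plan is to show first that $P_a'=P_a$ for $1\le a\le k-1$: the only beads in which $\cA'$ and $\cA$ differ are the bead $\tilde b_k^{(1)}$ and the removed bead $b_k^{(e)}$, both of which have $\beta$-values below $\beta(b_{k-1}^{(e)})$, so the recursive selection defining the first $k-1$ fore periods is untouched. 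Next I would argue that the set $(V_k\setminus P_k)\cup\tilde P_k$ is exactly the $k$'th vessel $V_k'$ of $\cA'$; the disjointness hypothesis ensures that $\tilde P_k$ is not swallowed by any earlier fore period of $\cA'$, so it plays the role of $P_k'$ or of a quasiperiod lying in the vessel of $P_k'$, and the connectivity clause of Definition \ref{defperiods}(\ref{vessel})(c) matches up. Then part (1) applied to $\cA$ yields that $V_k'$ has a unique quasiperiod, namely $\tilde P_k$, so the minimal quasiperiod of $V_k'$ is $\tilde P_k$, i.e.\ $Q_k'=\tilde P_k$.

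\textbf{Main obstacle.} The delicate step is the identification of the $k$'th vessel of $\cA'$ with $(V_k\setminus P_k)\cup\tilde P_k$ in the backward direction of (2). The fore-period selection in $\cA'$ between indices $k$ and the original $k+1$ could potentially rearrange; I would handle this by combining the disjointness hypothesis with a direct verification that the conditions (a)--(c) of Definition \ref{defperiods}(\ref{vessel}) for $\cA'$ at index $k$ reduce exactly to those for $\cA$ at index $k$, with $P_k$ replaced by $\tilde P_k$ and $P_{k+1}$ unchanged.
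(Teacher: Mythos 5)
Your overall architecture matches the paper's — a propagation argument inside $(V_k\setminus P_k)\cup \tilde{P}_k$ for (1), vessel-disjointness (Lemma \ref{lemvessels}) for the easy direction of (2), and a reduction to part (1) for the converse — but two of your steps would fail as written.

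In part (1), your second case (a quasiperiod $Q$ avoiding $\tilde{b}_k^{(1)}$) is not disposed of correctly. Such a $Q$ is generally \emph{not} ``already a quasiperiod of $\cA$'': the beads of $\tilde{P}_k$ are positions of $\cA'$, and $\tilde{b}_k^{(i)}$ lies in $\cA$ only when $b_k^{(i)}$ and $b_k^{(i-1)}$ share a row (also, $\tilde{P}_k\setminus\{\tilde{b}_k^{(1)}\}$ is not ``$P_k\setminus\{b_k^{(e)}\}$ shifted one row down''; its beads occupy the same columns but at the rows $j(b_k^{(2)}),\dots,j(b_k^{(e)})$). Moreover, even when $Q\subseteq\cA$, the claimed contradiction with the maximality of $P_k$ does not follow: every bead of $(V_k\setminus P_k)\cup(\tilde{P}_k\setminus\{\tilde{b}_k^{(1)}\})$ has column at most $\beta(b_k^{(1)})$, so such a $Q$ is typically \emph{smaller} than $P_k$ in the lexicographic order and is no competitor for the $k$'th fore period. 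The correct way to kill this case is the same propagation you run in your first case, started from an arbitrary bead of $Q\cap\tilde{P}_k$: Definition \ref{defperiods}(\ref{vessel})(a) leaves no bead of $(V_k\setminus P_k)\cup\tilde{P}_k$ weakly below $\tilde{P}_k$ in its columns other than $\tilde{P}_k$ itself, so all beads of $Q$ below the meeting point lie in $\tilde{P}_k$, and counting columns then forces the first bead of $Q$ to have column $\beta(b_k^{(1)})+1$, i.e.\ to be $\tilde{b}_k^{(1)}$ — contradicting the case hypothesis. If instead $Q\cap\tilde{P}_k=\emp$, then $Q$ is a quasiperiod of free beads of $\cA$, which is impossible since every quasiperiod meets some fore period.

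In part (2), the converse rests on two unjustified claims. First, $\cA$ and $\cA'$ differ in $2s$ positions where $s$ is the number of rows met by $P_k$ (in each such row the leftmost bead of $P_k$ is vacated and a new one appears at the right end), and these positions need not lie to the left of $\beta(b_{k-1}^{(e)})$, since consecutive fore periods routinely overlap in columns (see Example \ref{rank4example}); so ``the selection of $P_1',\dots,P_{k-1}'$ is untouched'' is exactly the point where the hypothesis $\tilde{P}_k\cap P_a'=\emp$ must do work, not a free observation. Second, the identification $V_k'=(V_k\setminus P_k)\cup\tilde{P}_k$ presupposes $P_k'=\tilde{P}_k$, which you never establish and which is stronger than the statement being proved (the lemma only asserts that $\tilde{P}_k$ is the $k$'th \emph{aft} period of $\cA'$). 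The paper avoids this by constraining $Q_k'$ directly: its first bead cannot lie to the right of $\tilde{b}_k^{(1)}$ by minimality of $Q_k'$ in its vessel; no bead of $Q_k'$ can lie below a bead of $\tilde{P}_k$ in the same column, else one builds a quasiperiod of $\cA$ larger than $P_k$ and disjoint from $P_1,\dots,P_{k-1}$; and its first bead cannot lie above $\tilde{b}_k^{(1)}$ in the same column, else $P_k$ was not the $k$'th fore period of $\cA$. This yields $Q_k'\subseteq(V_k\setminus P_k)\cup\tilde{P}_k$, after which part (1) finishes. You should replace the vessel identification with an argument of this kind.
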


\begin{proof}
\begin{enumerate}
 \item Let $Q\subset((V_k\setminus P_k)\cup\tilde{P}_k)$ be a quasiperiod. Definition \ref{defperiods} implies that $Q$ is not contained in $V_k\setminus P_k$. Let $b=(\beta,j)\in Q\cap \tilde{P}_k$. Say $b$ is the $i$'th bead of $Q$ and the $m$'th bead of $\tilde{P}_k$. Consider what the $i+1$'st bead of $Q$ can then be (if $i<e$): by the definition of quasiperiod, we know it is $(\beta-1,j')$ for some $j'\leq j$, and that $(\beta,j)\in\tilde{P}_k$. By Definition \ref{defperiods}(\ref{vessel}), note that if $(\alpha,h)\in P_k$, then $(\alpha,h'')\notin V_k$ for $h''<h$. This implies: (a) if $(\alpha,h)$ and $(\alpha-1,h')$ are successive beads of $P_k$, and so $(\alpha+1,h)$ and $(\alpha,h')$ are successive beads of $\tilde{P}_k$, then $(\alpha,h'')\in (V_k\setminus P_k)\cup\tilde{P}_k$ with $h''\leq h$ if and only if $h''=h'$; and (b) if $(\alpha,h)$ is the $e$'th bead of $P_k$ then $(\alpha, h')\notin (V_k\setminus P_k)\cup\tilde{P}_k$ for all $h'\leq h$.  Therefore $(\beta-1,j')$, the $i+1$'st bead of $Q$, must be the $m+1$'st bead of $\tilde{P}_k$. Iterating this argument, it follows that once $Q$ meets $\tilde{P}_k$ in a bead $b$, then all beads of $Q$ which are smaller than $b$ coincide with beads of $\tilde{P}_k$. Since both $Q$ and $\tilde{P}_k$ consist of $e$ beads with consecutive $\beta$-numbers, the first bead of $Q$ must then have at least as big a $\beta$-number as the first bead of $\tilde{P}_k$. But by Definition \ref{defperiods}(\ref{vessel})(a), the maximum $\beta$-number of a bead in $V_k$ is the $\beta$-number of the first bead of $P_k$. The only bead of $Q\subset((V_k\setminus P_k)\cup\tilde{P}_k)$ with $\beta$-number equal to that of the first bead of $\tilde{P}_k$ is the first bead of $\tilde{P}_k$ itself. So $Q$ and $\tilde{P}_k$ have the same first bead, and therefore they completely coincide.
 \item Suppose $\tilde{P}_{k}\cap P_a'=\emp$ for all $1\leq a\leq k-1$.
Since $\cA'$ is obtained from $\cA$ by shifting its $k$'th fore period $P_k$ to the right and this does not affect the fore periods smaller than $P_k$, we have
$P_a=P'_a$ and $\tilde{P}_k\cap P_a'=\emp$ for all $a\geq k+1$.
Any quasiperiod in an abacus must intersect some fore period,
so it must hold that $\tilde{P}_k\cap P'_k\neq\emp$. Moreover, the preceding remarks also imply that $\tilde{P}_k\subset V'_k$. The first bead $b'$ of $Q_k'$ cannot be to the right of the first bead $b$ of $\tilde{P}_k$ by minimality of $Q_k'$ in $V_k'$. If some bead of $Q_k'$ lies in the same column and below a bead of $\tilde{P}_k$ then, since $P_a=P_a'$ and $P_a'\cap Q_k'=\emp$ for all $1\leq a\leq k-1$, we'd be able to construct a larger quasiperiod than $P_k$ in $\cA$ which doesn't intersect $P_a$ for all $1\leq a\leq k-1$, contradicting that $P_k$ is the $k$'th fore period of $\cA$. If $b'$ is in the same column as $b$ and above it, then $(b',b_k^{(1)},b_k^{(2)},\dots,b_k^{(e-1)})$ would be the $k$'th fore period of $\cA$ instead of $P_k$. It follows that $Q_k'$ belongs to $(V_k\setminus P_k)\cup \tilde{P_k}$, but by Part (1), the only quasiperiod of $(V_k\setminus P_k)\cup \tilde{P_k}$ is $\tilde{P}_k$. Therefore $\tilde{P_k}= Q_k'$.

Conversely, suppose that $\tilde{P}_k=Q_k'$. 
Then $\tilde{P}_k$ is a subset of $V_k'$. 
The vessels are disjoint by Lemma \ref{lemvessels}, and $P'_a$ is a subset of $V_a'$ for all $a$, 
so $\tilde{P}_k\cap P'_a=\emp$ for all $a\neq k$.
\end{enumerate}

\end{proof}

\begin{remark} Note that taking $k=1$ in Lemma \ref{shiftsandvessels} (2), it follows that $\tilde{P}_1=Q_1'$ always.
\end{remark}

\subsection{Arrows in the $\slinf$-crystal}\label{subsectionedge} 
Our main theorem allows us to 
construct
the entire connected $\slinf$-crystal component of any $\ell$-abacus $\cA$ starting from nothing but knowledge of $\cA$ itself (and the specification of $e$, of course), using a rule 
for computing all incoming and outgoing arrows
that is explicit and easy to use. 
We indicate direction of motion in the $\slinf$-crystal moving away from a source by \textit{traveling downstream}, and moving towards a source by \textit{traveling upstream.}

Recall from Section \ref{Scrystal} that there is a graph isomorphism 
from a connected component of the $\slinf$-crystal on $\mathcal{F}_\mbs$ with source $\cA^\circ$ to the Young graph, sending $\cA=\tilde{b}_\theta\cA^\circ$ to $\theta$ and sending an arrow $\tilde{b}_\theta\cA^\circ\xrightarrow{m}\tilde{b}_\rho\cA^\circ$, $m\in\Z$, to the arrow that adds a box of content $m$ to $\theta$ to obtain $\rho$.

\begin{definition}\label{Ups_k} For $k\in\N$ and a connected component of the Fock space $\mathcal{F}_\mbs$, let $\Upsilon_k^+$ be the operator which corresponds under the graph isomorphism with the Young graph to the operator on partitions which acts on a partition $\theta$ by adding a box to the $k$'th row of $\theta$ if the number of nonzero parts of $\theta$ is at least $k-1$, and acts by $0$ if the number of parts of $\theta$ is less than $k-1$. Likewise, let $\Upsilon_k^-$ be the operator which corresponds under the graph isomorphism with the Young graph to the operator on partitions which acts on a partition $\theta$ by removing a box from row $k$ of $\theta$ if $\theta$ has at least $k$ nonzero parts, and acts by $0$ otherwise.
\end{definition}
Thus $\Upsilon_k^+$ acts on $\cA=\tilde{b}_\theta\cA^\circ$ by $\tb_{\theta_k+1-k}^+$ and $\Upsilon_k^-$ acts on $\cA$ by $\tb_{\theta_k-k}$, and the data of $\Upsilon_k^\pm$, $k\in\N$ is equivalent to the data of $\tb_m^\pm$, $m\in\Z$.

The following theorem is a direct analogue of Theorem \ref{thmchapeaucrystal}.
It gives the action of the maps $\Upsilon_k^\pm$ for $k\geq1$.
Using these, starting from any $\cA$, one can construct the entire connected component of the $\slinf$-crystal containing $\cA$.
In particular, one can recover its source vertex.

\begin{theorem}\label{thmedge}
Let $\cA$ and $\cA'$ be $\ell$-abaci.
There is an arrow $\cA\to\cA'$ in the $\slinf$-crystal if and only if the following equivalent situations hold for some $k\in\N$:
\begin{enumerate}
\item (\textit{traveling downstream}) 
$\cA'$ is obtained from $\cA$ by shifting the $k$'th fore period $P_k$ of $\cA$ one unit to the right, and the shift $\tilde{P_k}$ of $P_k$ is equal to $Q_k'$, the $k$'th aft period of $\cA'$. 
\item (\textit{traveling upstream})
$\cA$ is obtained from $\cA'$ by shifting the $k$'th aft period $Q_k'$ of $\cA'$ one unit to the left, and the shift $\tilde{Q}_k'$ of $Q_k'$ is equal to $P_k$, the $k$'th fore period of $\cA$. 
\end{enumerate}
In this case, we have $\cA' = \Upsilon_k^+ \cA$, or equivalently $\cA = \Upsilon_k^-\cA'$.
\end{theorem}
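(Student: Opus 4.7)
My plan is to leverage Theorem \ref{thmverticalstrips}, which describes the operators $\tb_\si$ via good vertical $e$-strips \cite{Gerber2016a}, and translate that description into the abacus language of fore periods from Definition \ref{defperiods}. The first step would be to verify that the $k$'th good vertical $e$-strip of a multipartition in the sense of \cite[Section 5]{Gerber2016a} corresponds, under the bead-partition bijection of Section \ref{abaci}, to the $k$'th fore period $P_k$ of the associated abacus, and that adding the $k$'th good vertical $e$-strip to the multipartition translates precisely into shifting $P_k$ one unit to the right. The erratum immediately following Definition \ref{defperiods} suggests this correspondence is essentially built into the design of fore periods, so this step should amount to a routine unwinding of definitions.

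Next, I would analyze $\tb_\theta \cA^\circ$ iteratively using Theorem \ref{thmverticalstrips}. Writing $\cA=\tb_\theta \cA^\circ$ and starting from $\cA^\circ$, we shift the first fore period $\theta_1$ times, then the updated second fore period $\theta_2$ times, and so on. Fix $k\geq 1$ and set $\rho=\theta+e_k$, the partition obtained from $\theta$ by adding $1$ to its $k$'th part. The computation of $\tb_\rho \cA^\circ$ agrees with that of $\tb_\theta \cA^\circ$ through the first $k-1$ rows of shifts, then performs one additional shift of the $k$'th fore period, then proceeds identically for subsequent rows. Since shifts of $P_j$ for $j>k$ do not alter the position of the $k$'th fore period, the $k$'th fore period of $\cA$ coincides with that of the intermediate abacus reached after the first $k$ rows of shifts in the $\theta$-procedure. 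Hence $\tb_\rho \cA^\circ$ is the abacus obtained from $\cA$ by shifting its own $P_k$ one unit to the right. By Definition \ref{Ups_k}, $\Upsilon_k^+ \cA = \tb^+_{\theta_k+1-k}\cA = \tb_\rho \cA^\circ$, which establishes claim (1) whenever the operation is valid.

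It remains to identify when this shift is well-defined and produces an edge in the $\slinf$-crystal, which is encoded by the condition $\tilde{P}_k=Q_k'$. The operator $\Upsilon_k^+$ is nonzero on $\cA$ if and only if $\rho$ is a valid partition, if and only if $\theta_{k-1}>\theta_k$ (with the convention $\theta_0=+\infty$), if and only if the intended additional shift of the $k$'th fore period does not collide with any earlier fore period of the intermediate abacus; by Lemma \ref{shiftsandvessels}(2) this is exactly the condition $\tilde{P}_k=Q_k'$. Statement (2) of the theorem then follows by reversing arrows, together with the fact that $\Upsilon_k^-$ is dual to $\Upsilon_k^+$ in the Young graph. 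The main obstacle is this last equivalence: matching the Young-diagram validity of $\rho$ with the non-collision condition $\tilde{P}_k\cap P_a'=\emp$ for $a<k$ demands careful tracking of how fore periods evolve under iterated shifts, and invoking the vessel structure (Definition \ref{defperiods} and Lemma \ref{lemvessels}) to recognize $\tilde{P}_k$ as the minimal quasiperiod in the $k$'th vessel of $\cA'$ precisely when no collision has occurred.
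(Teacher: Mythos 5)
Your overall architecture matches the paper's: reduce arrows in the $\slinf$-crystal to fore-period shifts via Theorem \ref{thmverticalstrips}, then use Lemma \ref{shiftsandvessels}(2) to translate the condition $\tilde{P}_k=Q_k'$ into disjointness of $\tilde{P}_k$ from the earlier fore periods of $\cA'$. The gap is that the decisive equivalence --- that $\theta$ having an addable box in row $k$ (equivalently $\Upsilon_k^+\cA\neq 0$) is the same as $\tilde{P}_k\cap P_a'=\emp$ for all $1\leq a\leq k-1$ --- is asserted in your final paragraph rather than proved; you yourself flag it as ``the main obstacle'' and only gesture at what would be required. This is the heart of the theorem; everything before it is definitional unwinding. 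The paper closes it by computing $\tilde{b}_{\theta+(1^k)}\cA^\circ$ in two different orders (first $\tilde{b}_{\theta\cup\ga}\cA^\circ=\cA'$ followed by $\tilde{b}_{(1^{k-1})}$, versus first $\tilde{b}_{\theta+(1^{k-1})}\cA^\circ$ followed by $\Upsilon_k^+$) and showing that a hypothetical bead of $\tilde{P}_k\cap P_c'$, chosen leftmost in its row, would be present in the resulting abacus under one computation but absent under the other --- a contradiction. For the converse it observes that the disjointness forces $\tilde{b}_{(1^{k-1})}\cA'=\tilde{b}_{(1^k)}\cA$, which identifies $\cA'$ as $\Upsilon_k^+\cA$ in the Young graph. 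Some argument of this kind is indispensable; without it the chain of ``if and only if''s in your last paragraph is circular, since ``does not collide'' is precisely what must be related to the partition combinatorics.

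A secondary issue: in your second paragraph you justify commuting the extra shift of the $k$'th fore period past the subsequent rows' shifts by saying that ``shifts of $P_j$ for $j>k$ do not alter the position of the $k$'th fore period.'' The beads of $P_k$ indeed do not move, but fore periods are selected by a global greedy rule (Definition \ref{defperiods}), so you must also check (i) that this fixed set of beads is still \emph{recognized} as the $k$'th fore period of the intermediate abaci, and (ii) that performing the extra shift of $P_k$ earlier does not change which quasiperiods are subsequently selected as the $(k+1)$'st, $(k+2)$'nd, $\dots$ fore periods. These stability statements are exactly where the vessel and aft-period machinery earns its keep, and they should not be taken for granted.
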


\begin{proof}
It is easily seen that (1) holds if and only if (2) holds, so we will prove that there is an arrow $\cA\to\cA'$ in the $\slinf$-crystal if and only if (1) holds.

First we argue that arrows in the $\slinf$-crystal are given by shifting fore periods to the right. From Sections \ref{Scrystal} and \ref{amu}, we know that $\cA=\tilde{b}_\theta\cA^\circ$ for a unique partition $\theta=(\theta_1,\theta_2,\dots)$ and a unique highest weight vertex $\cA^\circ$ of the $\slinf$-crystal, that $\cA'$ is in the connected component of $\cA$ if and only if $\cA'=\tilde{b}_\sigma\cA^\circ$ for a unique partition $\sigma$, and that the connected component of $\cA^\circ$ is isomorphic to the Young graph. This implies that $\cA'=\Upsilon_k^+\cA$ if and only if $\cA'=\tilde{b}_{\theta\cup\ga}\cA^\circ$ where $\ga$ is an addable box in row $k$ of $\theta$. Now we recall how $\tilde{b}_\theta$ was defined in \cite{Gerber2016a}: this operator recursively adds the $k$'th good vertical $e$-strip $\theta_k$ times to a charged multipartition, for each $k\geq 1$ (Theorem \ref{thmverticalstrips}).
Adding the $k$'th good addable strip corresponds to shifting the $k$'th fore period one unit to the right in the abacus, since adding a box to the diagram corresponds to shifting a bead to the right in the abacus, and the fore periods correspond to the good addable strips (Definition \ref{defperiods}). Therefore, the vertex $\cA$ has an outgoing arrow given by shifting the $k$'th fore period one unit to the right for each $k$ such that $\theta$ has an addable box in row $k$. 

Now we are ready to prove that if there is an arrow $\cA\to\cA'$ in the $\slinf$-crystal then (1) holds. By the previous paragraph, $\cA\to\cA'$ implies that $\cA'$ is obtained from $\cA$ by shifting the $k$'th fore period $P_k$ of $\cA$ to the right for some $k\in\N$. Write $\cA=\tilde{b}_\theta\cA^\circ$ and $\cA'=\Upsilon_k^+\cA=\tilde{b}_{\theta\cup\ga}\cA^\circ$ where $\ga$ is an addable box in row $k$ of $\theta$. 
If $k=1$ then by Lemma \ref{shiftsandvessels}(2), $\tilde{P}_1=Q_1'$. 
So suppose $k>1$. 
By Lemma \ref{shiftsandvessels} (2),
it suffices to show that $\tilde{P}_{k}\cap P_a'=\emp$ for all $1\leq a\leq k-1$.
By contradiction, suppose that there exists a bead $b=(\beta,j)$ in $\tilde{P}_{k}\cap P_c'$ for some $1\leq c\leq k-1$. We may choose the bead $b$ so that $(\beta-1,j)\notin\tilde{P}_k\cap P_a'$ for any $1\leq a\leq k-1$. Since $\tilde{P}_k$ is the right shift of $P_k$, either $(\beta-1,j)\in\tilde{P}_k$ or $(\beta-1,j)\notin\cA'$. So $(\beta-1,j)\notin P_a'$ for all $1\leq a\leq k-1$.
Let $\sigma=\theta+(1^k)=\theta\cup \ga+(1^{k-1})=(\theta_1+1,\theta_2+1,\dots,\theta_k+1,\theta_{k+1},\theta_{k+2},\dots)$. Now consider the position $(\beta,j)$ in the abacus $\tilde{b}_\sigma\cA^\circ$.
On the one hand, computing first $\tilde{b}_{\theta\cup \ga}\cA^\circ=\cA'$ and then applying $\tilde{b}_{(1^{k-1})}$, we observe that $b\in P_c'$ 
is shifted one unit to the right in $\tilde{b}_{(1^{k-1})}\cA'$, but since $(\beta-1,j)\notin P_a'$ for all $1\leq a\leq k-1$, then $(\beta,j)\notin (\cA'\setminus(P_1'\cup\dots\cup P_{k-1}'))\cup \tilde{P}_1'\cup\dots\cup\tilde{P}_{k-1}'=\tilde{b}_{(1^{k-1})}\cA'=
\tilde{b}_\sigma\cA^\circ$.
On the other hand, computing $\tilde{b}_{\theta+(1^{k-1})}\cA^\circ$ first and then applying $\Upsilon_{k}^+$, 
we observe that in $\tilde{b}_{\theta+(1^{k-1})}\cA^\circ=\tilde{b}_{(1^{k-1})}(\tilde{b}_\theta \cA^\circ)$, the $k$'th period has not shifted the $\theta_{k}+1$'st time yet and coincides with the $k$'th period $P_k$ of $\tilde{b}_\theta \cA^\circ$; upon applying $\Upsilon_k^+$, we have $(\beta,j)\in\tilde{P}_{k}\subset \Upsilon^+_k \tilde{b}_{\theta+(1^{k-1})}\cA^\circ =\tilde{b}_\sigma\cA^\circ$.
This is a contradiction, and therefore $\tilde{P}_{k}\cap P_a'=\emp$ for all $1\leq a \leq k-1$.

Conversely, suppose that (1) holds, i.e. suppose that $P_k\subset\cA$ can be shifted to the right yielding $\cA'$, and that the right shift $\tilde{P}_k$ of $P_k$ is $Q_k'$, the $k$'th aft period of $\cA'$. Let $\cA''=\tilde{b}_{(1^k)}\cA$. By Lemma \ref{shiftsandvessels}(2), $\tilde{P}_k\cap P_a'=\emp$ for all $1\leq a<k$. Thus $\tilde{b}_{(1^{k-1})}\cA'=\cA''$. Let $\tilde{b}_{(1^{k-1})}^-$ be the inverse map to $\tilde{b}_{(1^{k-1})}$. Then $\cA'=\tilde{b}_{(1^{k-1})}^-(\tilde{b}_{(1^k)}\cA)$ and therefore $\cA'=\Upsilon_k^+\cA$.
\end{proof}

\begin{remark}
In particular, one recovers \cite[Theorem 5.11]{Gerber2016a} by expressing the operators $\tb_\si$ as compositions of operators $\tb^+_k$ (or equivalently, operators $\Upsilon^+_k$).
\end{remark}

\begin{example} In Example \ref{rank4example}, the only fore periods which can travel downstream are $P_1$, $P_2$, and $P_6$. The only aft periods which can travel upstream are $Q_1$ and $Q_5$.
\end{example}

\begin{example}
Let $\ell=3$, $e=5$, $\lambda=((4,1),(7,2),(2,1^5))$, and $\mbs=(3,-3,1)$. We have its abacus $\cA=\cA(\bla,\bs)$:
$$\dots\quad\TikZ{[scale=.5]
\draw
(8,0)node[fill,circle,inner sep=.5pt]{}
(8,1)node[fill,circle,inner sep=.5pt]{}
(8,2)node[fill,circle,inner sep=.5pt]{}
(7,0)node[fill,circle,inner sep=3pt]{}
(7,1)node[fill,circle,inner sep=.5pt]{}
(7,2)node[fill,circle,inner sep=.5pt]{}
(6,0)node[fill,circle,inner sep=.5pt]{}
(6,1)node[fill,circle,inner sep=.5pt]{}
(6,2)node[fill,circle,inner sep=.5pt]{}
(5,0)node[fill,circle,inner sep=.5pt]{}
(5,1)node[fill,circle,inner sep=.5pt]{}
(5,2)node[fill,circle,inner sep=.5pt]{}
(4,0)node[fill,circle,inner sep=.5pt]{}
(4,1)node[fill,circle,inner sep=3pt]{}
(4,2)node[fill,circle,inner sep=.5pt]{}
(3,0)node[fill,circle,inner sep=3pt]{}
(3,1)node[fill,circle,inner sep=.5pt]{}
(3,2)node[fill,circle,inner sep=3pt]{}
(2,0)node[fill,circle,inner sep=.5pt]{}
(2,1)node[fill,circle,inner sep=.5pt]{}
(2,2)node[fill,circle,inner sep=.5pt]{}
(1,0)node[fill,circle,inner sep=3pt]{}
(1,1)node[fill,circle,inner sep=.5pt]{}
(1,2)node[fill,circle,inner sep=3pt]{}
(0,0)node[fill,circle,inner sep=3pt]{}
(0,1)node[fill,circle,inner sep=.5pt]{}
(0,2)node[fill,circle,inner sep=3pt]{}
(-1,0)node[fill,circle,inner sep=3pt]{}
(-1,1)node[fill,circle,inner sep=.5pt]{}
(-1,2)node[fill,circle,inner sep=3pt]{}
(-2,0)node[fill,circle,inner sep=3pt]{}
(-2,1)node[fill,circle,inner sep=3pt]{}
(-2,2)node[fill,circle,inner sep=3pt]{}
(-3,0)node[fill,circle,inner sep=3pt]{}
(-3,1)node[fill,circle,inner sep=.5pt]{}
(-3,2)node[fill,circle,inner sep=3pt]{}
(-4,0)node[fill,circle,inner sep=3pt]{}
(-4,1)node[fill,circle,inner sep=.5pt]{}
(-4,2)node[fill,circle,inner sep=.5pt]{}
(-5,0)node[fill,circle,inner sep=3pt]{}
(-5,1)node[fill,circle,inner sep=3pt]{}
(-5,2)node[fill,circle,inner sep=3pt]{}
(-6,0)node[fill,circle,inner sep=3pt]{}
(-6,1)node[fill,circle,inner sep=3pt]{}
(-6,2)node[fill,circle,inner sep=3pt]{}
(-7,0)node[fill,circle,inner sep=3pt]{}
(-7,1)node[fill,circle,inner sep=3pt]{}
(-7,2)node[fill,circle,inner sep=3pt]{}
(-8,0)node[fill,circle,inner sep=3pt]{}
(-8,1)node[fill,circle,inner sep=3pt]{}
(-8,2)node[fill,circle,inner sep=3pt]{}
(-9,0)node[fill,circle,inner sep=3pt]{}
(-9,1)node[fill,circle,inner sep=3pt]{}
(-9,2)node[fill,circle,inner sep=3pt]{}
;

\draw[very thick,red] plot [smooth,tension=.1] coordinates{(1,2)(0,2)(-1,2)(-2,2)(-3,2)};
\draw[very thick,red] plot [smooth,tension=.1] coordinates{(1,0)(0,0)(-1,0)(-2,0)(-3,0)};
\draw[very thick,red] plot [smooth,tension=.1] coordinates{(-4,0)(-5,0)(-6,0)(-7,0)(-8,0)};
\draw[very thick,red] plot [smooth,tension=.1] coordinates{(-5,1)(-6,1)(-7,1)(-8,1)(-9,0)};
\draw[very thick,red] plot [smooth,tension=.1] coordinates{(-5,2)(-6,2)(-7,2)(-8,2)(-9,1)};
\draw[very thick,red,dashed] plot [smooth,tension=.1] coordinates{(-9,2)(-10,2)};

}\quad\dots
$$
In this example, the aft periods and the fore periods coincide: $P_k=Q_k$ for all $k\in\N$. The only period that has space to move to the left is $Q_2$ in the top row. However, the shift of $Q_2$ one unit to the left, $\tilde{Q}_2$, is not the $2$nd fore period in the resulting abacus, so shifting $Q_2$ to the left is not a move that allows $\cA$ to travel upstream in the crystal. Indeed, here is the abacus $(\cA\setminus Q_2)\cup \tilde{Q}_2$:
$$\dots\quad\TikZ{[scale=.5]
\draw
(8,0)node[fill,circle,inner sep=.5pt]{}
(8,1)node[fill,circle,inner sep=.5pt]{}
(8,2)node[fill,circle,inner sep=.5pt]{}
(7,0)node[fill,circle,inner sep=3pt]{}
(7,1)node[fill,circle,inner sep=.5pt]{}
(7,2)node[fill,circle,inner sep=.5pt]{}
(6,0)node[fill,circle,inner sep=.5pt]{}
(6,1)node[fill,circle,inner sep=.5pt]{}
(6,2)node[fill,circle,inner sep=.5pt]{}
(5,0)node[fill,circle,inner sep=.5pt]{}
(5,1)node[fill,circle,inner sep=.5pt]{}
(5,2)node[fill,circle,inner sep=.5pt]{}
(4,0)node[fill,circle,inner sep=.5pt]{}
(4,1)node[fill,circle,inner sep=3pt]{}
(4,2)node[fill,circle,inner sep=.5pt]{}
(3,0)node[fill,circle,inner sep=3pt]{}
(3,1)node[fill,circle,inner sep=.5pt]{}
(3,2)node[fill,circle,inner sep=3pt]{}
(2,0)node[fill,circle,inner sep=.5pt]{}
(2,1)node[fill,circle,inner sep=.5pt]{}
(2,2)node[fill,circle,inner sep=.5pt]{}
(1,0)node[fill,circle,inner sep=3pt]{}
(1,1)node[fill,circle,inner sep=.5pt]{}
(1,2)node[fill,circle,inner sep=.5pt]{}
(0,0)node[fill,circle,inner sep=3pt]{}
(0,1)node[fill,circle,inner sep=.5pt]{}
(0,2)node[fill,circle,inner sep=3pt]{}
(-1,0)node[fill,circle,inner sep=3pt]{}
(-1,1)node[fill,circle,inner sep=.5pt]{}
(-1,2)node[fill,circle,inner sep=3pt]{}
(-2,0)node[fill,circle,inner sep=3pt]{}
(-2,1)node[fill,circle,inner sep=3pt]{}
(-2,2)node[fill,circle,inner sep=3pt]{}
(-3,0)node[fill,circle,inner sep=3pt]{}
(-3,1)node[fill,circle,inner sep=.5pt]{}
(-3,2)node[fill,circle,inner sep=3pt]{}
(-4,0)node[fill,circle,inner sep=3pt]{}
(-4,1)node[fill,circle,inner sep=.5pt]{}
(-4,2)node[fill,circle,inner sep=3pt]{}
(-5,0)node[fill,circle,inner sep=3pt]{}
(-5,1)node[fill,circle,inner sep=3pt]{}
(-5,2)node[fill,circle,inner sep=3pt]{}
(-6,0)node[fill,circle,inner sep=3pt]{}
(-6,1)node[fill,circle,inner sep=3pt]{}
(-6,2)node[fill,circle,inner sep=3pt]{}
(-7,0)node[fill,circle,inner sep=3pt]{}
(-7,1)node[fill,circle,inner sep=3pt]{}
(-7,2)node[fill,circle,inner sep=3pt]{}
(-8,0)node[fill,circle,inner sep=3pt]{}
(-8,1)node[fill,circle,inner sep=3pt]{}
(-8,2)node[fill,circle,inner sep=3pt]{}
(-9,0)node[fill,circle,inner sep=3pt]{}
(-9,1)node[fill,circle,inner sep=3pt]{}
(-9,2)node[fill,circle,inner sep=3pt]{};

\draw[very thick,red] plot [smooth,tension=.1] coordinates{(1,0)(0,0)(-1,0)(-2,0)(-3,0)};
\draw[very thick,red] plot [smooth,tension=.1] coordinates{(0,2)(-1,2)(-2,2)(-3,2)(-4,0)};
\draw[very thick,red] plot [smooth,tension=.1] coordinates{(-4,2)(-5,0)(-6,0)(-7,0)(-8,0)};
\draw[very thick,red] plot [smooth,tension=.1] coordinates{(-5,1)(-6,1)(-7,1)(-8,1)(-9,0)};
\draw[very thick,red] plot [smooth,tension=.1] coordinates{(-5,2)(-6,2)(-7,2)(-8,2)(-9,1)};
\draw[very thick,red,dashed] plot [smooth,tension=.1] coordinates{(-9,2)(-10,2)};

}\quad\dots
$$
Its second period is not the left shift of $Q_2$ from $\cA$. So no aft period $Q_k$ can travel upstream, and $\cA$ is a highest weight vertex for the $\slinf$-crystal.

The only period of $\cA$ that can travel downstream is $P_1$. Note that this is always the case if $\cA$ is a highest weight vertex for the $\slinf$-crystal.
\end{example}

\begin{example}
Let $\ell=5$, $e=4$, $\bla=((9,2),(5^2,4,3,2,1^2),(2,1^2),(6,4,2,1^2),(4^2,2^2,1^2))$, and $\mbs=(-4,2,-1,2,3)$. 
Let $\cA=\cA(\bla,\mbs)$. 

$$\dots\quad\TikZ{[scale=.5]
\draw
(10,0)node[fill,circle,inner sep=.5pt]{}
(10,1)node[fill,circle,inner sep=.5pt]{}
(10,2)node[fill,circle,inner sep=.5pt]{}
(10,3)node[fill,circle,inner sep=.5pt]{}
(10,4)node[fill,circle,inner sep=.5pt]{}
(9,0)node[fill,circle,inner sep=.5pt]{}
(9,1)node[fill,circle,inner sep=.5pt]{}
(9,2)node[fill,circle,inner sep=.5pt]{}
(9,3)node[fill,circle,inner sep=.5pt]{}
(9,4)node[fill,circle,inner sep=.5pt]{}
(8,0)node[fill,circle,inner sep=.5pt]{}
(8,1)node[fill,circle,inner sep=.5pt]{}
(8,2)node[fill,circle,inner sep=.5pt]{}
(8,3)node[fill,circle,inner sep=3pt]{}
(8,4)node[fill,circle,inner sep=.5pt]{}
(7,0)node[fill,circle,inner sep=.5pt]{}
(7,1)node[fill,circle,inner sep=3pt]{}
(7,2)node[fill,circle,inner sep=.5pt]{}
(7,3)node[fill,circle,inner sep=.5pt]{}
(7,4)node[fill,circle,inner sep=3pt]{}
(6,0)node[fill,circle,inner sep=.5pt]{}
(6,1)node[fill,circle,inner sep=3pt]{}
(6,2)node[fill,circle,inner sep=.5pt]{}
(6,3)node[fill,circle,inner sep=.5pt]{}
(6,4)node[fill,circle,inner sep=3pt]{}
(5,0)node[fill,circle,inner sep=3pt]{}
(5,1)node[fill,circle,inner sep=.5pt]{}
(5,2)node[fill,circle,inner sep=.5pt]{}
(5,3)node[fill,circle,inner sep=3pt]{}
(5,4)node[fill,circle,inner sep=.5pt]{}
(4,0)node[fill,circle,inner sep=.5pt]{}
(4,1)node[fill,circle,inner sep=3pt]{}
(4,2)node[fill,circle,inner sep=.5pt]{}
(4,3)node[fill,circle,inner sep=.5pt]{}
(4,4)node[fill,circle,inner sep=.5pt]{}
(3,0)node[fill,circle,inner sep=.5pt]{}
(3,1)node[fill,circle,inner sep=.5pt]{}
(3,2)node[fill,circle,inner sep=.5pt]{}
(3,3)node[fill,circle,inner sep=.5pt]{}
(3,4)node[fill,circle,inner sep=3pt]{}
(2,0)node[fill,circle,inner sep=.5pt]{}
(2,1)node[fill,circle,inner sep=3pt]{}
(2,2)node[fill,circle,inner sep=.5pt]{}
(2,3)node[fill,circle,inner sep=3pt]{}
(2,4)node[fill,circle,inner sep=3pt]{}
(1,0)node[fill,circle,inner sep=.5pt]{}
(1,1)node[fill,circle,inner sep=.5pt]{}
(1,2)node[fill,circle,inner sep=3pt]{}
(1,3)node[fill,circle,inner sep=.5pt]{}
(1,4)node[fill,circle,inner sep=.5pt]{}
(0,0)node[fill,circle,inner sep=.5pt]{}
(0,1)node[fill,circle,inner sep=3pt]{}
(0,2)node[fill,circle,inner sep=.5pt]{}
(0,3)node[fill,circle,inner sep=3pt]{}
(0,4)node[fill,circle,inner sep=3pt]{}
(-1,0)node[fill,circle,inner sep=.5pt]{}
(-1,1)node[fill,circle,inner sep=.5pt]{}
(-1,2)node[fill,circle,inner sep=3pt]{}
(-1,3)node[fill,circle,inner sep=3pt]{}
(-1,4)node[fill,circle,inner sep=3pt]{}
(-2,0)node[fill,circle,inner sep=.5pt]{}
(-2,1)node[fill,circle,inner sep=3pt]{}
(-2,2)node[fill,circle,inner sep=3pt]{}
(-2,3)node[fill,circle,inner sep=.5pt]{}
(-2,4)node[fill,circle,inner sep=.5pt]{}
(-3,0)node[fill,circle,inner sep=3pt]{}
(-3,1)node[fill,circle,inner sep=3pt]{}
(-3,2)node[fill,circle,inner sep=.5pt]{}
(-3,3)node[fill,circle,inner sep=3pt]{}
(-3,4)node[fill,circle,inner sep=3pt]{}
(-4,0)node[fill,circle,inner sep=.5pt]{}
(-4,1)node[fill,circle,inner sep=.5pt]{}
(-4,2)node[fill,circle,inner sep=3pt]{}
(-4,3)node[fill,circle,inner sep=3pt]{}
(-4,4)node[fill,circle,inner sep=3pt]{}
(-5,0)node[fill,circle,inner sep=.5pt]{}
(-5,1)node[fill,circle,inner sep=3pt]{}
(-5,2)node[fill,circle,inner sep=3pt]{}
(-5,3)node[fill,circle,inner sep=3pt]{}
(-5,4)node[fill,circle,inner sep=3pt]{}
(-6,0)node[fill,circle,inner sep=3pt]{}
(-6,1)node[fill,circle,inner sep=3pt]{}
(-6,2)node[fill,circle,inner sep=3pt]{}
(-6,3)node[fill,circle,inner sep=3pt]{}
(-6,4)node[fill,circle,inner sep=3pt]{}
(-7,0)node[fill,circle,inner sep=3pt]{}
(-7,1)node[fill,circle,inner sep=3pt]{}
(-7,2)node[fill,circle,inner sep=3pt]{}
(-7,3)node[fill,circle,inner sep=3pt]{}
(-7,4)node[fill,circle,inner sep=3pt]{}
;

\draw[very thick,red] plot [smooth,tension=.1] coordinates{(8,3)(7,1)(6,1)(5,0)};
\draw[very thick,red] plot [smooth,tension=.1] coordinates{(7,4)(6,4)(5,3)(4,1)};
\draw[very thick,red] plot [smooth,tension=.1] coordinates{(3.1,4)(2.1,3)(1.1,2)(0.1,1)};
\draw[very thick, blue] plot [smooth,tension=.1] coordinates{(3,4)(2,4)(1,2)(0,1)};
\draw[very thick,red] plot [smooth,tension=.1] coordinates{(0,3)(-1,2)(-2,1)(-3,0)};
\draw[very thick,red] plot [smooth,tension=.1] coordinates{(0.1,4)(-0.9,3)(-1.9,2)(-2.9,1)};
\draw[very thick,blue] plot [smooth,tension=.1] coordinates{(0,4)(-1,4)(-2,2)(-3,1)};
\draw[very thick,red] plot [smooth,tension=.1] coordinates{(-3,3)(-4,2)(-5,1)(-6,0)};
\draw[very thick,red] plot [smooth,tension=.1] coordinates{(-3,4)(-4,3)(-5,2)(-6,1)};
\draw[very thick,red] plot [smooth,tension=.1] coordinates{(-4,4)(-5,3)(-6,2)(-7,0)};

}\quad\dots
$$

All but two pairs of the aft periods and fore periods coincide: $P_k=Q_k$ except for when $k=3,5$. The fore periods are drawn in red, and the aft periods which differ from the fore periods are drawn in blue. We stopped drawing the periods after $P_8$. The fore periods $P_1$, $P_2$, $P_3$, $P_4$, and $P_6$ can travel downstream, and we have:
\begin{align*}
\Upsilon_1^+\cA&=((10,2),(6^2,4,3,2,1^2),(2,1^2),(7,4,2,1^2),(4^3,2^2,1^2))\\
\Upsilon_2^+\cA&=((9,2),(5^3,3,2,1^2),(2,1^2),(6,5,2,1^2),(5^2,2^2,1^2))\\
\Upsilon_3^+\cA&=((9,2),(5^2,4,3^2,1^2),(3,1^2),(6,4,3,1^2),(4^2,3,2,1^2))\\
\Upsilon_4^+\cA&=((9,3),(5^2,4,3,2^2,1),(2^2,1),(6,4,2^2,1),(4^2,2^2,1^2))\\
\Upsilon_6^+\cA&=((9,2,1),(5^2,4,3,2,1^3),(2,1^3),(6,4,2,1^3),(4^2,2^2,1^2))
\end{align*}
and $\Upsilon_k^+\cA=0$ otherwise. The aft periods $Q_1$, $Q_2$, $Q_3$, and $Q_5$ can travel upstream, and we have:
\begin{align*}
\Upsilon_1^-\cA&=((8,2),(4^3,3,2,1^2),(2,1^2),(5,4,2,1^2),(4^2,2^2,1^2))\\
\Upsilon_2^-\cA&=((9,2),(5^2,3^2,2,1^2),(2,1^2),(6,3,2,1^2),(3^2,2^2,1^2))\\
\Upsilon_3^-\cA&=((9,2),(5^2,4,3,1^3),(1^3),(6,4,2,1^2),(4^2,1^4))\\
\Upsilon_5^-\cA&=((9,2),(5^2,4,3,2,1),(2,1),(6,4,2,1^2),(4^2,2^2))
\end{align*}
and $\Upsilon_k^-\cA=0$ otherwise.
\end{example}

\begin{example} We leave it as an exercise for the reader to verify that taking $e=3$, there is no aft period that can travel upstream in the $3$-abacus below, and therefore this is the abacus of a highest weight vertex for the $\slinf$-crystal when $e=3$:
$$\dots\quad\TikZ{[scale=.5]
\draw
(11,0)node[fill,circle,inner sep=.5pt]{}
(11,1)node[fill,circle,inner sep=.5pt]{}
(11,2)node[fill,circle,inner sep=.5pt]{}
(10,0)node[fill,circle,inner sep=.5pt]{}
(10,1)node[fill,circle,inner sep=.5pt]{}
(10,2)node[fill,circle,inner sep=3pt]{}
(9,0)node[fill,circle,inner sep=3pt]{}
(9,1)node[fill,circle,inner sep=.5pt]{}
(9,2)node[fill,circle,inner sep=.5pt]{}
(8,0)node[fill,circle,inner sep=3pt]{}
(8,1)node[fill,circle,inner sep=.5pt]{}
(8,2)node[fill,circle,inner sep=3pt]{}
(7,0)node[fill,circle,inner sep=3pt]{}
(7,1)node[fill,circle,inner sep=3pt]{}
(7,2)node[fill,circle,inner sep=.5pt]{}
(6,0)node[fill,circle,inner sep=3pt]{}
(6,1)node[fill,circle,inner sep=3pt]{}
(6,2)node[fill,circle,inner sep=3pt]{}
(5,0)node[fill,circle,inner sep=3pt]{}
(5,1)node[fill,circle,inner sep=3pt]{}
(5,2)node[fill,circle,inner sep=3pt]{}
(4,0)node[fill,circle,inner sep=3pt]{}
(4,1)node[fill,circle,inner sep=3pt]{}
(4,2)node[fill,circle,inner sep=3pt]{}
(3,0)node[fill,circle,inner sep=3pt]{}
(3,1)node[fill,circle,inner sep=.5pt]{}
(3,2)node[fill,circle,inner sep=.5pt]{}
(2,0)node[fill,circle,inner sep=3pt]{}
(2,1)node[fill,circle,inner sep=3pt]{}
(2,2)node[fill,circle,inner sep=3pt]{}
(1,0)node[fill,circle,inner sep=3pt]{}
(1,1)node[fill,circle,inner sep=3pt]{}
(1,2)node[fill,circle,inner sep=3pt]{}
(0,0)node[fill,circle,inner sep=3pt]{}
(0,1)node[fill,circle,inner sep=3pt]{}
(0,2)node[fill,circle,inner sep=3pt]{};
}\quad\dots$$
\end{example}

Recall from Section \ref{amu} that the $\sle$-crystal and the $\slinf$-crystal commute. This implies that all vertices in a single $\slinf$-crystal component have the same depth in the $\sle$-crystal. When the vertices of the $\slinf$-crystal have depth $0$ in the $\sle$-crystal, equivalently, when the abaci are totally $e$-periodic, the rule for going upstream or downstream in the crystal simplifies:

\begin{corollary}\label{coredge}Let $\cA,\cA'$ be totally $e$-periodic $\ell$-abaci. 
There is an arrow $\cA\to\cA'$ in the $\slinf$-crystal if and only if the following equivalent situations hold for some $k\in\N$:
\begin{enumerate}
\item $\cA'$ is obtained from $\cA$ by shifting the $k$'th period $P_k$ one unit to the right, and the shift $\tilde{P}_k$ of $P_k$ is the $k$'th period of $\cA'$. 
\item $\cA$ is obtained from $\cA'$ by shifting the $k$'th period $P_k$ one unit to the left, and the shift $\tilde{P}_k$ of $P_k$ is the $k$'th period of $\cA$. 
\end{enumerate}
\end{corollary}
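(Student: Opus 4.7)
The plan is to derive the corollary as a direct specialization of Theorem \ref{thmedge} to the totally $e$-periodic setting, using Remark \ref{remforeaft} as the key simplification.

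First, I would observe that for a totally $e$-periodic abacus $\cA$, there are no free beads. By Definition \ref{defperiods}(\ref{vessel}), the $k$'th vessel $V_k$ therefore reduces to the fore period $P_k$ itself, since the only free beads available to enlarge it simply do not exist. Consequently the $k$'th aft period $Q_k$, defined as the minimal quasiperiod in $V_k = P_k$, coincides with $P_k$; the same argument applies to $\cA'$, giving $Q_k' = P_k'$. This is precisely the content of Remark \ref{remforeaft}, which I would invoke directly. Hence in the totally $e$-periodic setting there is an unambiguous notion of ``$k$'th period,'' and the distinction between fore and aft periods disappears.

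With this identification, the condition in Theorem \ref{thmedge}(1) that $\cA'$ be obtained by shifting $P_k$ one step to the right with $\tilde P_k = Q_k'$ becomes simply the condition that $\tilde P_k$ equal the $k$'th period of $\cA'$, which is the statement of part (1) of the corollary. The equivalent condition of Theorem \ref{thmedge}(2), phrased in terms of shifting the $k$'th aft period of $\cA'$ to the left, translates verbatim into the statement of part (2) of the corollary once $Q_k'$ is replaced by $P_k'$. The equivalence of (1) and (2) in the corollary then follows exactly as in Theorem \ref{thmedge}, since left- and right-shifting are mutually inverse operations.

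I do not anticipate any serious obstacle, as the corollary is essentially a bookkeeping rewrite of Theorem \ref{thmedge}. The only point worth a brief remark is to confirm that both endpoints of an $\slinf$-arrow really are totally $e$-periodic under the hypothesis: but this is supplied by the hypothesis of the corollary itself, and is in any case consistent with Proposition \ref{hwvchapeau} together with the fact (Section \ref{amu}) that the $\sle$- and $\slinf$-crystals commute, so that the property of being a highest weight vertex for $\sle$ is preserved along $\slinf$-edges.
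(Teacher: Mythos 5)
Your proposal is correct and matches the paper's (implicit) argument: the corollary is stated without a separate proof precisely because it is the specialization of Theorem \ref{thmedge} obtained by noting, as in Remark \ref{remforeaft}, that a totally $e$-periodic abacus has no free beads, so vessels collapse to fore periods and the fore, aft, and Jacon--Lecouvey periods all coincide. Your closing observation that total $e$-periodicity is preserved along $\slinf$-edges (via the commutation of the two crystals) is exactly the justification the paper gives in the sentence preceding the corollary.
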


\begin{example} 
The following $3$-abacus is totally $4$-periodic.
$$\dots\quad\TikZ{[scale=.5]
\draw
(8,0)node[fill,circle,inner sep=.5pt]{}
(8,1)node[fill,circle,inner sep=.5pt]{}
(8,2)node[fill,circle,inner sep=.5pt]{}
(7,0)node[fill,circle,inner sep=.5pt]{}
(7,1)node[fill,circle,inner sep=.5pt]{}
(7,2)node[fill,circle,inner sep=3pt]{}
(6,0)node[fill,circle,inner sep=3pt]{}
(6,1)node[fill,circle,inner sep=3pt]{}
(6,2)node[fill,circle,inner sep=.5pt]{}
(5,0)node[fill,circle,inner sep=3pt]{}
(5,1)node[fill,circle,inner sep=3pt]{}
(5,2)node[fill,circle,inner sep=.5pt]{}
(4,0)node[fill,circle,inner sep=3pt]{}
(4,1)node[fill,circle,inner sep=3pt]{}
(4,2)node[fill,circle,inner sep=.5pt]{}
(3,0)node[fill,circle,inner sep=3pt]{}
(3,1)node[fill,circle,inner sep=.5pt]{}
(3,2)node[fill,circle,inner sep=3pt]{}
(2,0)node[fill,circle,inner sep=.5pt]{}
(2,1)node[fill,circle,inner sep=.5pt]{}
(2,2)node[fill,circle,inner sep=3pt]{}
(1,0)node[fill,circle,inner sep=.5pt]{}
(1,1)node[fill,circle,inner sep=.5pt]{}
(1,2)node[fill,circle,inner sep=3pt]{}
(0,0)node[fill,circle,inner sep=3pt]{}
(0,1)node[fill,circle,inner sep=.5pt]{}
(0,2)node[fill,circle,inner sep=.5pt]{}
(-1,0)node[fill,circle,inner sep=3pt]{}
(-1,1)node[fill,circle,inner sep=.5pt]{}
(-1,2)node[fill,circle,inner sep=.5pt]{}
(-2,0)node[fill,circle,inner sep=3pt]{}
(-2,1)node[fill,circle,inner sep=.5pt]{}
(-2,2)node[fill,circle,inner sep=3pt]{}
(-3,0)node[fill,circle,inner sep=3pt]{}
(-3,1)node[fill,circle,inner sep=.5pt]{}
(-3,2)node[fill,circle,inner sep=3pt]{}
(-4,0)node[fill,circle,inner sep=3pt]{}
(-4,1)node[fill,circle,inner sep=3pt]{}
(-4,2)node[fill,circle,inner sep=.5pt]{}
(-5,0)node[fill,circle,inner sep=3pt]{}
(-5,1)node[fill,circle,inner sep=3pt]{}
(-5,2)node[fill,circle,inner sep=3pt]{}
(-6,0)node[fill,circle,inner sep=3pt]{}
(-6,1)node[fill,circle,inner sep=3pt]{}
(-6,2)node[fill,circle,inner sep=3pt]{}
(-7,0)node[fill,circle,inner sep=3pt]{}
(-7,1)node[fill,circle,inner sep=3pt]{}
(-7,2)node[fill,circle,inner sep=3pt]{}
(-8,0)node[fill,circle,inner sep=3pt]{}
(-8,1)node[fill,circle,inner sep=3pt]{}
(-8,2)node[fill,circle,inner sep=3pt]{}
(-9,0)node[fill,circle,inner sep=3pt]{}
(-9,1)node[fill,circle,inner sep=3pt]{}
(-9,2)node[fill,circle,inner sep=3pt]{}
;

\draw[very thick,red] plot [smooth,tension=.1] coordinates{(7,2)(6,0)(5,0)(4,0)};
\draw[very thick,red] plot [smooth,tension=.1] coordinates{(6,1)(5,1)(4,1)(3,0)};
\draw[very thick,red] plot [smooth,tension=.1] coordinates{(3,2)(2,2)(1,2)(0,0)};
\draw[very thick,red] plot [smooth,tension=.1] coordinates{(-1,0)(-2,0)(-3,0)(-4,0)};
\draw[very thick,red] plot [smooth,tension=.1] coordinates{(-2,2)(-3,2)(-4,1)(-5,0)};
\draw[very thick,red] plot [smooth,tension=.1] coordinates{(-5,1)(-6,0)(-7,0)(-8,0)};
\draw[very thick,red] plot [smooth,tension=.1] coordinates{(-5,2)(-6,1)(-7,1)(-8,1)};
\draw[very thick,red] plot [smooth,tension=.1] coordinates{(-6,2)(-7,2)(-8,2)(-9,0)};
}\quad\dots
$$
Here, only $P_2$ can travel upstream. In fact, the highest weight vertex is obtained by shifting $P_2$ twice to the left, and then $P_1$ twice to the left.
In particular, $\theta=(2^2)$.
\end{example}

The following lemma will be used in Section \ref{Applications}.

\begin{lemma}\label{lemgesperrt}Suppose $Q_k=(b_k^{(1)},\dots,b_k^{(e)})$ is a left-shiftable aft period of an abacus $\cA$. Suppose $P_m=(b_m^{(1)},\dots,b_m^{(e)})$, $m>k$, is a fore period of $\cA$ such that either (1) or (2) holds:
\begin{enumerate}
\item $\beta(b_m^{(1)})=\beta(b_k^{(1)})$,
\item $\beta(b_m^{(1)})=\beta(b_k^{(e)})-1$ and $j(b_m^{(1)})<j(b_k^{(e)})$.
\end{enumerate}
Then $\Upsilon_k^-\cA=0$, i.e., shifting $Q_k$ to the left does not give an edge in the $\slinf$-crystal.
\end{lemma}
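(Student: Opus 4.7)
The plan is a proof by contradiction using the upstream characterization in Theorem~\ref{thmedge}. Assume $\Upsilon_k^-\cA\neq 0$ and set $\cA^*:=\Upsilon_k^-\cA$; by Theorem~\ref{thmedge}(2), $\cA^*=(\cA\setminus Q_k)\cup\tilde{Q}_k$ and $\tilde{Q}_k$ is forced to be the $k$-th fore period of $\cA^*$. The strategy is to exhibit, in each of the two cases, a quasiperiod $R\subset\cA^*$ with $R>\tilde{Q}_k$ in the lex order that is disjoint from the first $k-1$ fore periods of $\cA^*$, directly violating the defining property of a $k$-th fore period.

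Under hypothesis (1), take $R:=P_m$. By Lemma~\ref{lemvessels}, $P_m\subset V_m$ and $Q_k\subset V_k$ are disjoint (as $m\neq k$), so $P_m$ is a quasiperiod of $\cA^*$ unchanged from $\cA$. Its first bead lies in column $\beta(b_k^{(1)})$, strictly to the right of the first bead of $\tilde{Q}_k$ in column $\beta(b_k^{(1)})-1$, yielding $R>\tilde{Q}_k$. Disjointness of $R=P_m$ from $P_1,\dots,P_{k-1}$ is built into the definition of fore periods in $\cA$. Under hypothesis (2), take $R:=(\tilde{b}_k^{(1)},\dots,\tilde{b}_k^{(e-1)},b_m^{(1)})$, where $\tilde{b}_k^{(i)}$ denotes the one-step left shift of $b_k^{(i)}$. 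The $\beta$-values of $R$ are consecutive because $\beta(b_m^{(1)})=\beta(b_k^{(e)})-1$, and the rows weakly decrease because $j(b_m^{(1)})<j(b_k^{(e)})\leq j(b_k^{(e-1)})$, so $R$ is a valid quasiperiod of $\cA^*$. Its first $e-1$ beads match those of $\tilde{Q}_k$ while $b_m^{(1)}$ sits in a strictly lower row than $\tilde{b}_k^{(e)}$, giving $R>\tilde{Q}_k$. Since $Q_k$ is left-shiftable, each $\tilde{b}_k^{(i)}$ lies outside $\cA$ and so belongs to no fore period of $\cA$, and $b_m^{(1)}\in P_m$ is disjoint from each $P_a$ with $a\neq m$. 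Hence $R\cap P_a=\emp$ for all $a<k$.

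The main obstacle is to bridge the gap between ``$R$ is disjoint from $P_1,\dots,P_{k-1}$ of $\cA$'' and ``$R$ is disjoint from the first $k-1$ fore periods of $\cA^*$,'' i.e., to establish the sub-claim that $P_a(\cA^*)=P_a$ for all $a<k$. The crucial input is that every bead of $\tilde{Q}_k$ has $\beta$-value at most $\beta(b_k^{(1)})-1$, which lies strictly below the first bead of $P_{k-1}$ in the lex order on beads; coupled with $P_1,\dots,P_{k-1}$ remaining intact quasiperiods of $\cA^*$ by Lemma~\ref{lemvessels}, a careful case analysis of how new quasiperiods in $\cA^*$ can arise from the shift shows that the greedy definition of fore periods selects precisely $P_1,\dots,P_{k-1}$ in its first $k-1$ steps. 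With the sub-claim in hand, the existence of $R$ forces $P_k(\cA^*)\geq R>\tilde{Q}_k$, contradicting $P_k(\cA^*)=\tilde{Q}_k$ and completing the proof.
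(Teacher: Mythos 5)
Your proposal follows essentially the same route as the paper's proof: both argue, via Theorem \ref{thmedge}, that $\tilde{Q}_k$ cannot be the $k$'th fore period of $(\cA\setminus Q_k)\cup\tilde{Q}_k$ by exhibiting a competing quasiperiod that is larger than $\tilde{Q}_k$ and disjoint from $P_1,\dots,P_{k-1}$, and your case-(2) witness $(\tilde{b}_k^{(1)},\dots,\tilde{b}_k^{(e-1)},b_m^{(1)})$ is exactly the paper's. Your case-(1) witness $R=P_m$ is a mild variant of the paper's $(b_m^{(1)},\tilde{b}_k^{(1)},\dots,\tilde{b}_k^{(e-1)})$ and works just as well; it even spares you the paper's observation that hypothesis (1) forces $j(b_m^{(1)})>j(b_k^{(1)})$. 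One slip to correct: left-shiftability of $Q_k$ does \emph{not} imply that every $\tilde{b}_k^{(i)}$ lies outside $\cA$ --- when $b_k^{(i)}$ and $b_k^{(i+1)}$ sit in the same row one has $\tilde{b}_k^{(i)}=b_k^{(i+1)}\in Q_k$; the disjointness you need still holds, since such beads lie in $Q_k\subset V_k$ and hence, by Lemma \ref{lemvessels}, meet no $P_a$ with $a\neq k$. Finally, you defer the sub-claim that $P_1,\dots,P_{k-1}$ remain the first $k-1$ fore periods after the shift to an unexecuted ``careful case analysis''; the paper settles this with the one-line observation that the shift only alters quasiperiods meeting $Q_k$ or smaller than $Q_k$, which the greedy selection of $P_1,\dots,P_{k-1}$ never consults, so this is not a gap relative to the paper's own level of detail, but you should state that reason rather than leave it as a promise.
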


\begin{proof}
Let $\widetilde{\cA}$ be the abacus obtained by shifting $Q_k$ to the left, let $\tilde{Q}_k$ be the left shift of $Q_k$ and let $\tilde{b}_k^{(i)}$ be its beads. If $P_1,\dots,P_{k-1}$ are the first $k-1$ fore periods of $\cA$ then they are also the first $k-1$ fore periods of $\widetilde{\cA}$ as shifting $Q_k$ to the left does not affect the quasiperiods which are both disjoint from $Q_k$ and larger than $Q_k$. If (1) holds then necessarily $j(b_m^{(1)})>j(b_k^{(1)})$, so $P=(b_m^{(1)},\tilde{b}_k^{(1)},\tilde{b}_k^{(2)},\dots,\tilde{b}_k^{(e-1)})$ is a larger quasiperiod of $\widetilde{\cA}$ than $\tilde{Q}_k$ whose intersection with $P_1,\dots,P_{k-1}$ is empty, so $\tilde{Q}_k$ cannot be the $k$'th fore period of $\widetilde{\cA}$. Similarly, if (2) holds then the quasiperiod $\{\tilde{b}_k^{(1)},\dots,\tilde{b}_k^{(e-1)}, b_m^{(1)}\}>\tilde{Q}_k$. By Theorem \ref{thmedge}, this implies that $\Upsilon_k^-\cA=0$. 
\end{proof}

\section{Position of an abacus in its $\slinf$-crystal component}\label{depth}

For all $\cA\in\cF_\bs$, there exists a unique highest weight vertex $\cA^\circ\in\cF_\bs$ for $\slinf$
and a unique partition $\theta$ such that $\cA = \tb_\theta \cA^\circ$, and $|\theta|$ is the depth of $\cA$ in the $\slinf$-crystal (see Sections \ref{Scrystal} and \ref{amu}). Pairing this observation with Theorem \ref{thmedge} gives an easy way to read off $\theta$ from $\cA$.
For all $k\in\N$, let $\al_k=\max\{q\in \Z_{\geq 0} \mid (\Upsilon_k^-)^q(\cA)\neq 0\},$ where $(\Upsilon_k^-)^q$ is $\Upsilon_k^-$ applied $q$ times. In abacus terms, $\alpha_k$ is the number of successive times $Q_k\subset\cA$ can travel a unit upstream. Let $r=\max\{n\in\N\mid\alpha_n\neq 0\}$. Thus $r$ is maximal such that $Q_r\subset \cA$ can travel upstream. Set $\de_k = \sum_{m=k}^r \al_m$.
 
\begin{theorem}\label{thmdepth} The partition $\theta$ is given by the formula
$$\theta = (\de_1,\de_2,\dots,\de_r).$$
\end{theorem}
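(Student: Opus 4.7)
The plan is to reduce the statement to a purely combinatorial identity about partitions, by exploiting the identification of the $\slinf$-crystal component of $\cA$ with the Young graph. Recall from Sections \ref{Scrystal} and \ref{amu} that the map $\tilde{b}_\rho \cA^\circ \mapsto \rho$ is an isomorphism of labeled graphs from the component of $\cA$ onto the Young graph, and by Definition \ref{Ups_k} the operator $\Upsilon_k^-$ corresponds under this bijection to the operator on partitions that removes a box from row $k$ when possible and returns $0$ otherwise. Theorem \ref{thmedge} gives the abacus-level incarnation of $\Upsilon_k^-$ as ``shift the $k$'th aft period one unit to the left, provided its shift becomes the $k$'th fore period of the resulting abacus''.

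Writing $\cA = \tilde{b}_\theta \cA^\circ$, the key claim is that $\alpha_k = \theta_k - \theta_{k+1}$ for every $k \geq 1$, with the convention $\theta_{k+1} = 0$ whenever $k \geq \ell(\theta)$. Indeed, on partitions the operator ``remove a box from row $k$'' can be applied to $\rho$ if and only if $\rho_k > \rho_{k+1}$, and if so it decreases $\rho_k$ by $1$ while leaving the other parts unchanged. Iterating this operation starting from $\theta$ therefore yields a nonzero partition exactly $\theta_k - \theta_{k+1}$ times before it first produces $0$. Translating through the Young graph isomorphism, $(\Upsilon_k^-)^q \cA \neq 0$ if and only if $q \leq \theta_k - \theta_{k+1}$, which is precisely the assertion $\alpha_k = \theta_k - \theta_{k+1}$.

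From this, the theorem follows by a telescoping sum. Since $\alpha_k > 0$ exactly when $\theta_k > \theta_{k+1}$, we have $r = \max\{n \in \mathbb{N} : \alpha_n \neq 0\} = \ell(\theta)$, so $\theta_{r+1} = 0$. For each $1 \leq k \leq r$,
\[
\delta_k \;=\; \sum_{m=k}^{r} \alpha_m \;=\; \sum_{m=k}^{r} (\theta_m - \theta_{m+1}) \;=\; \theta_k - \theta_{r+1} \;=\; \theta_k,
\]
which yields the desired formula $\theta = (\delta_1, \delta_2, \dots, \delta_r)$.

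The only genuine obstacle is verifying the translation between the abacus-level and partition-level descriptions of $\Upsilon_k^-$, but this is already encoded in Definition \ref{Ups_k} together with Theorem \ref{thmedge}; once that dictionary is granted, the argument is a one-line partition computation. No separate treatment of the boundary case $k = r$ is needed, because the convention $\theta_{r+1} = 0$ is forced by the choice of $r$.
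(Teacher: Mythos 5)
Your proposal is correct and follows essentially the same route as the paper's proof: both translate $\Upsilon_k^-$ through the Young-graph isomorphism into box-removal in row $k$, identify $\alpha_k=\theta_k-\theta_{k+1}$ (the paper phrases this as ``$\theta$ has exactly $\alpha_k$ columns of length $k$''), and then recover $\theta_k=\delta_k$ — the paper by transposing $\theta^t=(k_s^{\alpha_{k_s}},\dots,k_1^{\alpha_{k_1}})$, you by the equivalent telescoping sum. The only cosmetic difference is that your telescoping computation makes the final step slightly more direct.
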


\begin{proof}
By Theorem \ref{thmedge}, there is an edge $\cB\rightarrow\cA$ in the $\slinf$-crystal if and only if $Q_k\subset\cA$ can travel upstream for some $k$ yielding $\cB$ if and only if $\Upsilon_k^-\cA=\cB $.
By Definition \ref{Ups_k}, $(\Upsilon_k^-)^a(\cA)\neq0$ if and only if $\theta$ has $a$ successively removable boxes in row $k$. Thus $\alpha_k$ is the maximum number of successively removable boxes in row $k$, which means that $\theta$ has exactly $\alpha_k$ columns of length $k$. Therefore, 
$\theta^t=(k_s^{\alpha_{k_s}},k_{s-1}^{\alpha_{k_{s-1}}},\dots,k_1^{\alpha_{k_1}})$, where $k_1<k_2<\dots<k_s\in\N$ are all of the $k\in\N$ such that $\alpha_k\neq 0$. It then follows by the formula for the transpose of a partition that $\theta=(\de_1,\de_2,\dots,\de_r)$.
\end{proof}

\begin{example}
Let $\ell=2$, $e=2$, $\mbs=(0,-1)$, and $\bla=((6,4,4,2,2,1),(5,5,5,3,3))$. Then $$\cA(\bla,\bs)=\dots\quad
\TikZ{[scale=.5]
\draw
(8,0)node[fill,circle,inner sep=.5pt]{}
(7,0)node[fill,circle,inner sep=.5pt]{}
(6,0)node[fill,circle,inner sep=3pt]{}
(5,0)node[fill,circle,inner sep=.5pt]{}
(4,0)node[fill,circle,inner sep=.5pt]{}
(3,0)node[fill,circle,inner sep=3pt]{}
(2,0)node[fill,circle,inner sep=3pt]{}
(1,0)node[fill,circle,inner sep=.5pt]{}
(0,0)node[fill,circle,inner sep=.5pt]{}
(-1,0)node[fill,circle,inner sep=3pt]{}
(-2,0)node[fill,circle,inner sep=3pt]{}
(-3,0)node[fill,circle,inner sep=.5pt]{}
(-4,0)node[fill,circle,inner sep=3pt]{}
(-5,0)node[fill,circle,inner sep=.5pt]{}
(-6,0)node[fill,circle,inner sep=3pt]{}
(-7,0)node[fill,circle,inner sep=3pt]{}
(-8,0)node[fill,circle,inner sep=3pt]{}
(-9,0)node[fill,circle,inner sep=3pt]{}
(8,1)node[fill,circle,inner sep=.5pt]{}
(7,1)node[fill,circle,inner sep=.5pt]{}
(6,1)node[fill,circle,inner sep=.5pt]{}
(5,1)node[fill,circle,inner sep=.5pt]{}
(4,1)node[fill,circle,inner sep=3pt]{}
(3,1)node[fill,circle,inner sep=3pt]{}
(2,1)node[fill,circle,inner sep=3pt]{}
(1,1)node[fill,circle,inner sep=.5pt]{}
(0,1)node[fill,circle,inner sep=.5pt]{}
(-1,1)node[fill,circle,inner sep=3pt]{}
(-2,1)node[fill,circle,inner sep=3pt]{}
(-3,1)node[fill,circle,inner sep=.5pt]{}
(-4,1)node[fill,circle,inner sep=.5pt]{}
(-5,1)node[fill,circle,inner sep=.5pt]{}
(-6,1)node[fill,circle,inner sep=3pt]{}
(-7,1)node[fill,circle,inner sep=3pt]{}
(-8,1)node[fill,circle,inner sep=3pt]{}
(-9,1)node[fill,circle,inner sep=3pt]{}
;

\draw[very thick,red] plot [smooth,tension=.1] coordinates{(4,1)(3,0)};
\draw[very thick,red] plot [smooth,tension=.1] coordinates{(3.1,1)(2.1,0)};
\draw[very thick,blue] plot [smooth,tension=.1] coordinates{(3,1)(2,1)};
\draw[very thick,red] plot [smooth,tension=.1] coordinates{(-1,1)(-2,1)};
\draw[very thick,red] plot [smooth,tension=.1] coordinates{(-1,0)(-2,0)};
\draw[very thick,red] plot [smooth,tension=.1] coordinates{(-6,1)(-7,1)};
\draw[very thick,red] plot [smooth,tension=.1] coordinates{(-6,0)(-7,0)};
\draw[very thick,red] plot [smooth,tension=.1] coordinates{(-8,1)(-9,1)};
\draw[very thick,red] plot [smooth,tension=.1] coordinates{(-8,0)(-9,0)};

}\quad\dots
$$
We have $Q_i=P_i$ for all $i\neq 2$; $Q_2$ is the quasiperiod drawn in blue. We have $r=4$, $\alpha_4=2$, $\alpha_2=2$, and $\alpha_k=0$ for all $k\neq 2,4$. The position of $\lambda$ in the $\slinf$-crystal is therefore $\theta=(4,4,2,2)^t=(4,4,2,2)$, and $q(\lambda)=|\theta|=12$.
\end{example}

\begin{example} The abacus in Example \ref{rank4example} has only two aft periods which can travel upstream, $Q_5$ and $Q_1$, and $\alpha_5=1=\alpha_1.$ This abacus thus has position $\theta=(5,1)^t=(2,1^4)$ in its crystal component, and its depth in the $\slinf$-crystal is $6$.
\end{example}

\begin{remark}\label{depthchapeau} 
Even though the $\sle$-crystal structure of the Fock space has been widely studied and is considered well-understood
(\cite{JMMO1991}, \cite{FLOTW1999}, \cite{GeckJacon2011}, \cite{Gerber2015}),
it is a difficult problem to find an explicit combinatorial formula for the depth of an abacus $\cA(\bla,\mbs)=:\cA$ in this crystal.
However, this piece of information is one half of the knowledge of the support of $\el(\bla)\in\oh_c(G(\ell,1,n))$ (see Section \ref{support}). Note that if
$\cA(\overline{\bla},\mbs)$ is the source vertex of the $\sle$-crystal component containing $\cA$, then $ p(\bla) = |\bla|-|\overline{\bla}|$ is the depth of $\cA$ in the $\sle$-crystal. An efficient algorithm for computing $\overline{\bla}$ has been given in \cite[Theorem 6.3]{Gerber2015}. 
This does not require computing the action of the Kashiwara crystal operators,
and relies instead on an affine analogue of the Robinson-Schensted correspondence, denoted $\Phi$. 
Constructing $\Phi$ however requires iterated use of the Schensted insertion procedure on symbols.
We pose the question: is there a way to compute $p(\bla)$ without constructing $\overline{\bla}$, as we have done for $q(\bla)$?
\end{remark}

\section{Charges in $e\Z^\ell$ and a closed formula for doubly highest weight vertices} \label{widthe}

In this section, we consider $\ell$-charges of the form $\bs=(s_1,\dots,s_\ell)$ 
where $s_j-s_{j'}\in e\Z$ for all $j,j'$.
Without loss of generality in virtue of Remarks \ref{remtranslationcharge1} and \ref{remtranslationcharge2},
we can assume that 
$\min\{ s_j \mid 1\leq j \leq \ell \}=0$ and thus $s_j=ez_j$ with $z_j\in\Z_{\geq0}$ for all $j=1\dots,\ell$ and $\min\{z_j\}=0$.
Write $\bz=(z_1,\dots,z_\ell)$, and $\cA^\circ=\cA(\bemp,\bs)$.
We will give a combinatorial formula for the vertices belonging to the connected component of the $\slinf$-crystal with highest weight vertex $\cA^\circ$. 
Such vertices are called \textit{doubly highest weight vertices} in \cite{Gerber2016} as they are singular for the action of $\sle$
and $\slell$ (see \cite{Uglov1999} and \cite{Gerber2016}) simultaneously.

Because $\cA^\circ$ is the abacus of the empty charged multipartition, it is totally periodic, and thus its fore and aft periods coincide by Remark \ref{remforeaft}.
Moreover, because $\bs=e\bz$, each period of $\cA^\circ$ is a sequence of $e$ beads $(b_i)_{i=1,\dots,e}$ in the same row and satisfying $b_1=(\be_1,j_1)$ with $\be_1=0\mod e$.
Therefore, we can construct a (reverse) tabloid $T$ by replacing the $k$'th period of $\cA^\circ$ by the number $k$.
For each $j=1,\dots,\ell$, let $T_j$ be the sequence of numbers in row $j$ of $T$, in increasing order.

For a partition $\si=(\si_1,\si_2,\dots)$ (with infinitely many zero parts), and for 
any increasing integer sequence $X=(x_1,x_2,\dots)$,
write $\si[X,e]=(\si^e_{x_1},\si^e_{x_2},\dots)$.

\begin{theorem}\label{thmwidthe}
Let $\si$ be a partition and $\bs$ be as above.
Then $\tb_\si|\bemp,\bs\rangle = |\bla,\bs\rangle$ where
$$
\la^j = \si[T_j,e]
$$
\end{theorem}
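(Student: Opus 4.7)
The plan is to proceed by induction on $|\si|$, using Theorem \ref{thmedge} at each step. Write $\cA_\si$ for the abacus of $|\bla,\bs\rangle$ with $\la^j=\si[T_j,e]$; the goal is then $\tb_\si\cA^\circ=\cA_\si$. The base case $\si=\emp$ is immediate, since $\tb_\emp=\mathrm{id}$ and $\cA_\emp=\cA^\circ$.

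The key step is a structural description of $\cA_\si$. For each $k\in\N$, let $\rho(k)$ denote the row of $\cA^\circ$ containing $P_k^\circ$ (the $k$-th period of $\cA^\circ$) and $\mu(k)$ its position within that row from the top, so that $(T_{\rho(k)})_{\mu(k)}=k$ and, because $\bs=e\bz$, the top bead of $P_k^\circ$ sits at $\beta$-coordinate $e(z_{\rho(k)}-\mu(k)+1)$. Translating the formula $\la^{\rho(k)}=\si[T_{\rho(k)},e]$ through the abacus bijection shows that $\cA_\si$ is obtained from $\cA^\circ$ by shifting each horizontal $e$-block $P_k^\circ$ exactly $\si_k$ units to the right. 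Since $\si$ is weakly decreasing and $T_j$ is listed in increasing order of $k$, the shifts along any row of $\cA_\si$ are weakly decreasing from top to bottom, so the $e$-blocks stay disjoint and $\cA_\si$ is totally $e$-periodic; its fore and aft periods therefore coincide by Remark \ref{remforeaft}. Comparing top beads shows that the $k$-th fore period of $\cA_\si$ is precisely the shifted $P_k^\circ$: for $k<k'$, either $e(z_{\rho(k)}-\mu(k)+1)\geq e(z_{\rho(k')}-\mu(k')+1)+e$, in which case $\si_k\geq\si_{k'}$ keeps the tops strictly ordered in $\cA_\si$, or the tops are equal in $\cA^\circ$ with $\rho(k)<\rho(k')$, in which case the same tie-breaking by smaller row-index persists.

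For the inductive step, pick $k\geq 1$ such that $\si$ has a removable corner box in row $k$, and set $\si'=\si$ with this box removed (so $\si'_k=\si_k-1$ and $\si'_{k'}=\si_{k'}$ otherwise). By induction $\tb_{\si'}\cA^\circ=\cA_{\si'}$, whose $k$-th fore period is $P_k^\circ$ shifted by $\si_k-1$. This period is right-shiftable: the target position $(e(z_{\rho(k)}-\mu(k)+1)+\si_k,\rho(k))$ could only be blocked by the bottom bead of the block immediately above it in row $\rho(k)$, sitting at $\beta$-coordinate $e(z_{\rho(k)}-\mu(k)+1)+\si_{k''}+1$ with $\mu(k'')=\mu(k)-1$ and $k''<k$; but $\si_{k''}\geq\si_k$ puts that bead strictly above the target. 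The shift produces $\cA_\si$ exactly, and since $\cA_\si$ is totally $e$-periodic the aft-period condition $\tilde P_k=Q_k'$ of Theorem \ref{thmedge}(1) is automatic, giving $\Upsilon_k^+\cA_{\si'}=\cA_\si$. Combining with the Young-graph isomorphism of Section \ref{Scrystal}, under which the edge ``add a box to row $k$'' from $\si'$ to $\si$ corresponds to $\Upsilon_k^+$, we obtain $\tb_\si\cA^\circ=\Upsilon_k^+\tb_{\si'}\cA^\circ=\Upsilon_k^+\cA_{\si'}=\cA_\si$, completing the induction.

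The main obstacle is the structural lemma identifying the $k$-th fore period of $\cA_\si$ with the shifted $P_k^\circ$. This uses essentially both that $\si$ is a partition (so $\si_k$ is weakly decreasing in $k$) and that $\bs\in e\Z^\ell$ (which makes all top $\beta$-coordinates of $\cA^\circ$-periods multiples of $e$), so that the $\si_k$-shifts cannot collide or reorder the horizontal $e$-blocks in any given row.
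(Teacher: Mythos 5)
Your proposal is correct and follows essentially the same route as the paper: both rest on the observation that the periods of $\cA^\circ$ are horizontal $e$-blocks located by the tabloid $T$, and that the $\slinf$-crystal edges shift the $k$'th such block one unit right. The paper gets this in one step by citing Corollary \ref{coredge} (shift $P_1$ right $\si_1$ times, then $P_2$ right $\si_2$ times, \dots), whereas you unwind the same fact as an induction on $|\si|$ via Theorem \ref{thmedge}, spelling out the right-shiftability and the identification of the fore/aft periods of $\cA_\si$ with the shifted blocks — a more explicit but equivalent argument.
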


\begin{proof}

Corollary \ref{coredge} implies that 
if $\cA^\circ$ is a highest weight vertex in both the $\sle$- and $\slinf$-crystals, then every other vertex $\tilde{b}_\theta\cA^\circ$ in its $\slinf$-crystal connected component may be obtained from $\cA^\circ$ by first, shifting $P_1$ to the right $\theta_1$ times, then, shifting $P_2$ to the right $\theta_2$ times, etc.\footnote{Note that this recovers \cite[Proposition 7.4]{Gerber2016}. In the even more particular case where $\cA$ is a \textit{doubly highest weight vertex}, i.e. $\cA^\circ$ is the empty $\ell$-partition,
one recovers the original result of \cite[Remark 6.16]{Gerber2016}.} Complete information about the periods of $\cA^\circ$ is given by the tabloid $T$:  
the row of $T$ containing entry $k$ is the row containing the $k$'th period of $\cA$. Making the $k$'th period travel $\si_k$ times downstream is exactly shifting
its $e$ beads, that all belong to the same row, say $j$, by $\si_k$ steps to the right, which corresponds
to adding $\si_k^e$ in the $j$'th component of the $\ell$-partition.
\end{proof}

\begin{example}
Take $e=3$, $\ell=8$ and $\mbz=(2,5,3,0,2,1,1,2)$.
Then the abacus $\cA^\circ$ looks as follows
$$\dots\quad
\TikZ{[scale=.5]
\draw
(-5,0)node[fill,circle,inner sep=3pt]{}
(-4,0)node[fill,circle,inner sep=3pt]{}
(-3,0)node[fill,circle,inner sep=3pt]{}
(-2,0)node[fill,circle,inner sep=3pt]{}
(-1,0)node[fill,circle,inner sep=3pt]{}
(0,0)node[fill,circle,inner sep=3pt]{}
(1,0)node[fill,circle,inner sep=3pt]{}
(2,0)node[fill,circle,inner sep=3pt]{}
(3,0)node[fill,circle,inner sep=3pt]{}
(4,0)node[fill,circle,inner sep=3pt]{}
(5,0)node[fill,circle,inner sep=3pt]{}
(6,0)node[fill,circle,inner sep=3pt]{}
(7,0)node[fill,circle,inner sep=.5pt]{}
(8,0)node[fill,circle,inner sep=.5pt]{}
(9,0)node[fill,circle,inner sep=.5pt]{}
(10,0)node[fill,circle,inner sep=.5pt]{}
(11,0)node[fill,circle,inner sep=.5pt]{}
(12,0)node[fill,circle,inner sep=.5pt]{}
(13,0)node[fill,circle,inner sep=.5pt]{}
(14,0)node[fill,circle,inner sep=.5pt]{}
(15,0)node[fill,circle,inner sep=.5pt]{}
(16,0)node[fill,circle,inner sep=.5pt]{}
(17,0)node[fill,circle,inner sep=.5pt]{}
(18,0)node[fill,circle,inner sep=.5pt]{}

(-5,1)node[fill,circle,inner sep=3pt]{}
(-4,1)node[fill,circle,inner sep=3pt]{}
(-3,1)node[fill,circle,inner sep=3pt]{}
(-2,1)node[fill,circle,inner sep=3pt]{}
(-1,1)node[fill,circle,inner sep=3pt]{}
(0,1)node[fill,circle,inner sep=3pt]{}
(1,1)node[fill,circle,inner sep=3pt]{}
(2,1)node[fill,circle,inner sep=3pt]{}
(3,1)node[fill,circle,inner sep=3pt]{}
(4,1)node[fill,circle,inner sep=3pt]{}
(5,1)node[fill,circle,inner sep=3pt]{}
(6,1)node[fill,circle,inner sep=3pt]{}
(7,1)node[fill,circle,inner sep=3pt]{}
(8,1)node[fill,circle,inner sep=3pt]{}
(9,1)node[fill,circle,inner sep=3pt]{}
(10,1)node[fill,circle,inner sep=3pt]{}
(11,1)node[fill,circle,inner sep=3pt]{}
(12,1)node[fill,circle,inner sep=3pt]{}
(13,1)node[fill,circle,inner sep=3pt]{}
(14,1)node[fill,circle,inner sep=3pt]{}
(15,1)node[fill,circle,inner sep=3pt]{}
(16,1)node[fill,circle,inner sep=.5pt]{}
(17,1)node[fill,circle,inner sep=.5pt]{}
(18,1)node[fill,circle,inner sep=.5pt]{}

(-5,2)node[fill,circle,inner sep=3pt]{}
(-4,2)node[fill,circle,inner sep=3pt]{}
(-3,2)node[fill,circle,inner sep=3pt]{}
(-2,2)node[fill,circle,inner sep=3pt]{}
(-1,2)node[fill,circle,inner sep=3pt]{}
(0,2)node[fill,circle,inner sep=3pt]{}
(1,2)node[fill,circle,inner sep=3pt]{}
(2,2)node[fill,circle,inner sep=3pt]{}
(3,2)node[fill,circle,inner sep=3pt]{}
(4,2)node[fill,circle,inner sep=3pt]{}
(5,2)node[fill,circle,inner sep=3pt]{}
(6,2)node[fill,circle,inner sep=3pt]{}
(7,2)node[fill,circle,inner sep=3pt]{}
(8,2)node[fill,circle,inner sep=3pt]{}
(9,2)node[fill,circle,inner sep=3pt]{}
(10,2)node[fill,circle,inner sep=.5pt]{}
(11,2)node[fill,circle,inner sep=.5pt]{}
(12,2)node[fill,circle,inner sep=.5pt]{}
(13,2)node[fill,circle,inner sep=.5pt]{}
(14,2)node[fill,circle,inner sep=.5pt]{}
(15,2)node[fill,circle,inner sep=.5pt]{}
(16,2)node[fill,circle,inner sep=.5pt]{}
(17,2)node[fill,circle,inner sep=.5pt]{}
(18,2)node[fill,circle,inner sep=.5pt]{}

(-5,3)node[fill,circle,inner sep=3pt]{}
(-4,3)node[fill,circle,inner sep=3pt]{}
(-3,3)node[fill,circle,inner sep=3pt]{}
(-2,3)node[fill,circle,inner sep=3pt]{}
(-1,3)node[fill,circle,inner sep=3pt]{}
(0,3)node[fill,circle,inner sep=3pt]{}
(1,3)node[fill,circle,inner sep=.5pt]{}
(2,3)node[fill,circle,inner sep=.5pt]{}
(3,3)node[fill,circle,inner sep=.5pt]{}
(4,3)node[fill,circle,inner sep=.5pt]{}
(5,3)node[fill,circle,inner sep=.5pt]{}
(6,3)node[fill,circle,inner sep=.5pt]{}
(7,3)node[fill,circle,inner sep=.5pt]{}
(8,3)node[fill,circle,inner sep=.5pt]{}
(9,3)node[fill,circle,inner sep=.5pt]{}
(10,3)node[fill,circle,inner sep=.5pt]{}
(11,3)node[fill,circle,inner sep=.5pt]{}
(12,3)node[fill,circle,inner sep=.5pt]{}
(13,3)node[fill,circle,inner sep=.5pt]{}
(14,3)node[fill,circle,inner sep=.5pt]{}
(15,3)node[fill,circle,inner sep=.5pt]{}
(16,3)node[fill,circle,inner sep=.5pt]{}
(17,3)node[fill,circle,inner sep=.5pt]{}
(18,3)node[fill,circle,inner sep=.5pt]{}

(-5,4)node[fill,circle,inner sep=3pt]{}
(-4,4)node[fill,circle,inner sep=3pt]{}
(-3,4)node[fill,circle,inner sep=3pt]{}
(-2,4)node[fill,circle,inner sep=3pt]{}
(-1,4)node[fill,circle,inner sep=3pt]{}
(0,4)node[fill,circle,inner sep=3pt]{}
(1,4)node[fill,circle,inner sep=3pt]{}
(2,4)node[fill,circle,inner sep=3pt]{}
(3,4)node[fill,circle,inner sep=3pt]{}
(4,4)node[fill,circle,inner sep=3pt]{}
(5,4)node[fill,circle,inner sep=3pt]{}
(6,4)node[fill,circle,inner sep=3pt]{}
(7,4)node[fill,circle,inner sep=.5pt]{}
(8,4)node[fill,circle,inner sep=.5pt]{}
(9,4)node[fill,circle,inner sep=.5pt]{}
(10,4)node[fill,circle,inner sep=.5pt]{}
(11,4)node[fill,circle,inner sep=.5pt]{}
(12,4)node[fill,circle,inner sep=.5pt]{}
(13,4)node[fill,circle,inner sep=.5pt]{}
(14,4)node[fill,circle,inner sep=.5pt]{}
(15,4)node[fill,circle,inner sep=.5pt]{}
(16,4)node[fill,circle,inner sep=.5pt]{}
(17,4)node[fill,circle,inner sep=.5pt]{}
(18,4)node[fill,circle,inner sep=.5pt]{}

(-5,5)node[fill,circle,inner sep=3pt]{}
(-4,5)node[fill,circle,inner sep=3pt]{}
(-3,5)node[fill,circle,inner sep=3pt]{}
(-2,5)node[fill,circle,inner sep=3pt]{}
(-1,5)node[fill,circle,inner sep=3pt]{}
(0,5)node[fill,circle,inner sep=3pt]{}
(1,5)node[fill,circle,inner sep=3pt]{}
(2,5)node[fill,circle,inner sep=3pt]{}
(3,5)node[fill,circle,inner sep=3pt]{}
(4,5)node[fill,circle,inner sep=.5pt]{}
(5,5)node[fill,circle,inner sep=.5pt]{}
(6,5)node[fill,circle,inner sep=.5pt]{}
(7,5)node[fill,circle,inner sep=.5pt]{}
(8,5)node[fill,circle,inner sep=.5pt]{}
(9,5)node[fill,circle,inner sep=.5pt]{}
(10,5)node[fill,circle,inner sep=.5pt]{}
(11,5)node[fill,circle,inner sep=.5pt]{}
(12,5)node[fill,circle,inner sep=.5pt]{}
(13,5)node[fill,circle,inner sep=.5pt]{}
(14,5)node[fill,circle,inner sep=.5pt]{}
(15,5)node[fill,circle,inner sep=.5pt]{}
(16,5)node[fill,circle,inner sep=.5pt]{}
(17,5)node[fill,circle,inner sep=.5pt]{}
(18,5)node[fill,circle,inner sep=.5pt]{}

(-5,6)node[fill,circle,inner sep=3pt]{}
(-4,6)node[fill,circle,inner sep=3pt]{}
(-3,6)node[fill,circle,inner sep=3pt]{}
(-2,6)node[fill,circle,inner sep=3pt]{}
(-1,6)node[fill,circle,inner sep=3pt]{}
(0,6)node[fill,circle,inner sep=3pt]{}
(1,6)node[fill,circle,inner sep=3pt]{}
(2,6)node[fill,circle,inner sep=3pt]{}
(3,6)node[fill,circle,inner sep=3pt]{}
(4,6)node[fill,circle,inner sep=.5pt]{}
(5,6)node[fill,circle,inner sep=.5pt]{}
(6,6)node[fill,circle,inner sep=.5pt]{}
(7,6)node[fill,circle,inner sep=.5pt]{}
(8,6)node[fill,circle,inner sep=.5pt]{}
(9,6)node[fill,circle,inner sep=.5pt]{}
(10,6)node[fill,circle,inner sep=.5pt]{}
(11,6)node[fill,circle,inner sep=.5pt]{}
(12,6)node[fill,circle,inner sep=.5pt]{}
(13,6)node[fill,circle,inner sep=.5pt]{}
(14,6)node[fill,circle,inner sep=.5pt]{}
(15,6)node[fill,circle,inner sep=.5pt]{}
(16,6)node[fill,circle,inner sep=.5pt]{}
(17,6)node[fill,circle,inner sep=.5pt]{}
(18,6)node[fill,circle,inner sep=.5pt]{}

(-5,7)node[fill,circle,inner sep=3pt]{}
(-4,7)node[fill,circle,inner sep=3pt]{}
(-3,7)node[fill,circle,inner sep=3pt]{}
(-2,7)node[fill,circle,inner sep=3pt]{}
(-1,7)node[fill,circle,inner sep=3pt]{}
(0,7)node[fill,circle,inner sep=3pt]{}
(1,7)node[fill,circle,inner sep=3pt]{}
(2,7)node[fill,circle,inner sep=3pt]{}
(3,7)node[fill,circle,inner sep=3pt]{}
(4,7)node[fill,circle,inner sep=3pt]{}
(5,7)node[fill,circle,inner sep=3pt]{}
(6,7)node[fill,circle,inner sep=3pt]{}
(7,7)node[fill,circle,inner sep=.5pt]{}
(8,7)node[fill,circle,inner sep=.5pt]{}
(9,7)node[fill,circle,inner sep=.5pt]{}
(10,7)node[fill,circle,inner sep=.5pt]{}
(11,7)node[fill,circle,inner sep=.5pt]{}
(12,7)node[fill,circle,inner sep=.5pt]{}
(13,7)node[fill,circle,inner sep=.5pt]{}
(14,7)node[fill,circle,inner sep=.5pt]{}
(15,7)node[fill,circle,inner sep=.5pt]{}
(16,7)node[fill,circle,inner sep=.5pt]{}
(17,7)node[fill,circle,inner sep=.5pt]{}
(18,7)node[fill,circle,inner sep=.5pt]{}
;

}\quad\dots
$$
The tabloid $T$ for $\cA^\circ$ is: 
$$
T= \quad
\begin{array}{cccccccc}
\dots &32 & 24 & 16 & 9&  & & 
\\
\dots & 31& 23 & 15 & & & & 
\\
\dots & 30& 22 & 14 & & &  & 
\\
\dots & 29& 21 & 13 & 8 & &  & 
\\
\dots & 28& 20 & &  & &  & 
\\
\dots & 27& 19 & 12 & 7 & 4 &  & 
\\
\dots & 26& 18 & 11 &  6 & 3 & 2 & 1
\\
\dots & 25& 17 & 10 & 5 &  &  & 
\end{array}
$$
from which we may compute $\tilde{b}_\sigma(\cA^\circ)$ for any partition $\sigma$.
E.g. if $\si=(12,9^2,7,6,4,3^2,2^2,1^4)$,
$$\tb_\si|\bemp,\bs\rangle= ( (6^3,2^3) ,  (12^3,9^6,4^3,1^3) , (7^3,3^3,1^3) ,  \emp , (3^3,1^3) , (1^3), \emp, (2^3) ).
$$
\end{example}

In the case where $z_1\geq z_2\geq\dots\geq z_\ell=0$,
we can give a more direct formula for $\tb_\si\cA$.
In this case, we may 
write 
$$\bz=(z_1,\dots,z_\ell)=(y_1^{a_1}, y_2^{a_2},\dots,y_m^{a_m}, 0^{b})$$
where $0<y_i<y_{i-1}$ and where $a_1+a_2+\dots+a_m+b=\ell$. Set $N=\sum_{i=1}^\ell z_i=\sum_{i=1}^m a_iy_i$ and set $d_i=y_i-y_{i+1}$, $i=1,\dots,m$.
If $\lambda=(\la_1,\la_2,\dots)$ is a partition, write $\lambda[e]$ as shorthand for $\la[\N,e]=(\lambda_1^e,\lambda_2^e,\lambda_3^e,\dots)$. 

\begin{corollary} \label{zpartition}
Let $\si$ be a partition and $\bs=e\mbz$ with $z_1\geq z_2\geq\dots\geq z_\ell=0$.
Then $\tb_\si|\bemp,\bs\rangle = |\bla,\bs\rangle$ where

$$
\begin{array}{rcll}
\la^j & 
= & 
\begin{array}[t]{l}
(\sigma_j, \sigma_{a_1+j},\sigma_{2a_1+j},\dots \sigma_{(d_1-1)a_1+j}, \\
\quad \sigma_{d_1a_1+j},\sigma_{d_1a_1+a_2+j},\sigma_{d_1a_1+2a_2+j},\dots,\sigma_{d_1a_1+(d_2-1)a_2+j},\\
\quad \dots,\sigma_{d_1a_1+d_2a_2+\dots+(d_m-1)a_m+j},\sigma_{N+j},\sigma_{N+\ell+j},\sigma_{N+2\ell+j},\sigma_{N+3\ell+j},\dots)[e]
\end{array}\vspace{2mm}
\\ 
&& \text{ for } 1\leq j\leq a_1,
\\ \\
\la^j & 
= & 
\begin{array}[t]{l}
(\sigma_{d_1a_1+j},\sigma_{d_1a_1+a_2+j},\sigma_{d_1a_1+2a_2+j},\dots,\sigma_{d_1a_1+(d_2-1)a_2+j}, \\
\quad\quad \sigma_{d_1a_1+d_2a_2+j},\sigma_{d_1a_1+d_2a_2+a_3+j},\sigma_{d_1a_1+d_2a_2+2a_3+j},\dots,\sigma_{d_1a_1+d_2a_2+(d_3-1)a_3+j}, \\
\quad\quad\dots,\sigma_{d_1a_1+d_2a_2+\dots+(d_m-1)a_m+j},\sigma_{N+j},\sigma_{N+\ell+j},\sigma_{N+2\ell+j},\sigma_{N+3\ell+j},\dots)[e]
\end{array}\vspace{2mm}
\\
&&\text{ for } a_1+1\leq j\leq a_1+ a_2,
\\ \\
\vdots
\\ \\
\la^j & 
= & 
\begin{array}[t]{l}
(\sigma_{\sum_{i=1}^p d_ia_i+j},\sigma_{\sum_{i=1}^p d_ia_i+a_{p+1}+j},\sigma_{\sum_{i=1}^p d_ia_i+2a_{p+1}+j},\dots,\sigma_{\sum_{i=1}^p d_ia_i+(d_{p+1}-1)a_{p+1}+j}, \\
\quad\quad \sigma_{\sum_{i=1}^{p+1} d_ia_i+j},\sigma_{\sum_{i=1}^{p+1} d_ia_i+a_{p+2}+j},\sigma_{\sum_{i=1}^{p+1} d_ia_i+2a_{p+2}+j},\dots,\sigma_{\sum_{i=1}^{p+1} d_ia_i+(d_{p+2}-1)a_{p+2}+j},\\
\quad\quad\dots,\sigma_{d_1a_1+d_2a_2+\dots+(d_m-1)a_m+j},\sigma_{N+j},\sigma_{N+\ell+j},\sigma_{N+2\ell+j},\sigma_{N+3\ell+j},\dots)[e]
\end{array}\vspace{2mm}
\\
&&\text{ for } a_1+a_2+\dots+a_p+1\leq j \leq a_1+a_2+\dots+a_p+a_{p+1},
\\ \\
\vdots
\\ \\
\la^j & 
= & 
(\sigma_{N+j},\sigma_{N+\ell+j},\sigma_{N+2\ell+j},\sigma_{N+3\ell+j},\dots)[e]
\vspace{2mm}
\\
&&\text{ for } \sum_{i=1}^m a_i<j\leq \ell.
\\ \\
\end{array}
$$

\end{corollary}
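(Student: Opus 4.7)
The plan is to reduce the formula directly to Theorem \ref{thmwidthe}, which expresses each component $\lambda^j$ as $\sigma[T_j,e]$ in terms of the sequence $T_j$ of indices of periods of $\cA^\circ$ lying in row $j$. The work therefore amounts to computing the $T_j$ explicitly in the special case $z_1\geq z_2\geq\dots\geq z_\ell=0$. A first easy observation is that because $\bs=e\mbz$, every row of $\cA^\circ$ has its top bead at a multiple of $e$, so every $e$-period of $\cA^\circ$ is horizontal: it is confined to a single row and occupies $e$ consecutive columns of the form $[ez-e+1,ez]$. This is a key simplification over the general setup.

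Next I would enumerate the periods of $\cA^\circ$ block by block, one block per distinct value $y_p$. Using the minimum-row tie-break of condition (3) in Definition \ref{defperiodJL}, an induction on $p$ shows the following round structure: the first block consists of $d_1$ rounds, each peeling $a_1$ periods cyclically from rows $1,2,\dots,a_1$ at successive columns $ey_1,e(y_1-1),\dots,e(y_2+1)$. After these $d_1a_1$ periods, rows $1,\dots,a_1$ have tops at $ey_2$, matching the tops of rows $a_1+1,\dots,a_1+a_2$; the next $d_2$ rounds each then peel $a_1+a_2$ periods through rows $1,\dots,a_1+a_2$ in increasing order. Continuing across all $m$ blocks accounts for exactly $N=\sum_i a_iy_i$ periods, after which every row has top $0$ and subsequent rounds cycle through all $\ell$ rows.

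Reading $T_j$ off from this round structure and substituting into $\lambda^j=\sigma[T_j,e]$ yields the formulas in the statement, one case according to which block contains the first period in row $j$. The main obstacle is the bookkeeping for the period indices at block transitions, where the active set of rows grows from $a_1+\dots+a_p$ to $a_1+\dots+a_{p+1}$ and the round size changes accordingly; otherwise, the argument is a direct verification based on the horizontality of the periods of $\cA^\circ$ and the minimum-row tie-break rule inherited from Definition \ref{defperiodJL}.
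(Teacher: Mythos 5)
Your proposal is correct and follows the paper's own route exactly: the published proof likewise reduces the corollary to Theorem \ref{thmwidthe} (periods of $\cA^\circ$ are horizontal since $\bs=e\mbz$, so one only needs a closed formula for the rows $T_j$ of the tabloid) and then dismisses that computation as ``an elementary combinatorial problem,'' which is precisely the block-and-round enumeration you spell out. One caution when you ``read $T_j$ off'': the round size in the $(p+1)$st block is $a_1+\cdots+a_{p+1}$, as you correctly derive, so the increments and offsets in the displayed formula must be matched against cumulative sums of the multiplicities rather than the single $a_{p+1}$ printed there --- your abacus analysis is the reliable version of the statement.
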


\begin{proof} 

By Theorem \ref{thmwidthe},
the formula for $|\bla,\bs\rangle$ is obtained by identifying the periods in $\cA$ by looking at the tabloid $T$
and taking the subsequences of $\N$ corresponding to the rows of $T$.
Therefore, it suffices to determine a closed formula for the entries of row $j$ of $T$ for all $j=1,\dots,\ell$.
It is an elementary combinatorial problem to see that the formulas above compute the desired numbers.
\end{proof}

We can now give a combinatorial procedure for computing $\tb_\sigma\cA^\circ$ when $\mbz$ need not satisfy $z_j\geq z_{j+1}$, starting from the situation of Corollary \ref{zpartition}.
Take any $\mbz\in\Z^\ell$ with $\min\{z_j\}=0$, let $T$ be its tabloid, and let $T'$ be the tabloid resulting from switching $z_j$ and $z_{j'}$ for some $j> j'$. Since the order in $T$ goes from bottom to top across the rows, all entries of $T$ located in the rectangle consisting of columns of $T$ which have an entry in row $j$ or $j'$ but not both, and rows from $j'$ to $j$ inclusive, will slide up (if $z_j<z_j'$) or down (if $z_j>z_j'$) to the next available spot in the same column of the diagram of shape $T'$.

\begin{example}
Take $\mbz=(7,7,5,1,0)$ and say we want to change the $2$nd and $4$th entries to get $\mbz'=(7,1,5,7,0)$. 
We replace the diagram of shape $T$ with the diagram of shape $T'$ by switching the second and fourth rows. Then to get the tableau $T'$, all entries located in the rectangle
slide up to the next available row in the new diagram, while all entries outside the rectangle remain unchanged:
\begin{center} 
\begin{tikzpicture}
\matrix [matrix of math nodes] (m)
        {
& \dots & 35 & 30 & 25 & \ & \ & \ & \ & \ & \ & \ \\
& \dots & 34 & 29 & 24 & 20 & {\color{white}16} &\  & \ & \ &  & {\color{white}2} \\
T= \quad  &\dots & 33 & 28 & 23 & 19 & 16 & 13 & 10 & 7 & \ & \ \\
&\dots & 32 & 27 & 22 & 18 & 15 & 12 & 9 & 6 & 4 & 2 \\
& \dots & 31 & 26 & 21 & 17 & 14 & 11 & 8 & 5 & 3 & 1 \\
        };  
        \draw[color=black] (m-2-7.north west) -- (m-2-12.north east) -- (m-4-12.south east) -- (m-4-7.south west) -- (m-2-7.north west);
    \end{tikzpicture}
\end{center}
\begin{center}
\begin{tikzpicture}
       \matrix [matrix of math nodes] (m)
        {
&\dots & 35 & 30 & 25 & \ & \ & \ & \ & \ & \ & \\
&\dots & 34 & 29 & 24 & 20 & 16 & 13 & 10 & 7 & 4 & 2 \\
T'= \quad &\dots & 33 & 28 & 23 & 19 & 15 & 12 & 9 & 6 & \ & \\
&\dots & 32 & 27 & 22 & 18 & {\color{white}16}  & \ & \ & \ &  & {\color{white}2}  \\
&\dots & 31 & 26 & 21 & 17 & 14 & 11 & 8 & 5 & 3 & 1 \\
        };  
        \draw[color=black] (m-2-7.north west) -- (m-2-12.north east) -- (m-4-12.south east) -- (m-4-7.south west) -- (m-2-7.north west);
    \end{tikzpicture}
\end{center}
\end{example}

\section{Applications}\label{Applications}

\subsection{Depth of the trivial representation in the $\sle$- and $\slinf$-crystals }\label{Depth of triv}

Given an abacus $\cA$, we call the \textit{bidepth} of $\cA$ the pair $(q,p)\in\Z_{\geq0}^2$ where
$q$ is the depth of $\cA$ in the $\slinf$-crystal and $p$ is the depth of $\cA$ in the $\sle$-crystal.

\medskip

For $n\in\Z_{\geq0}$, let $\triv$ denote the $\ell$-partition $((1^n),\emp,\emp,\dots,\emp)$. 
This labels the trivial representation of $G(\ell,1,n)$.
The aim of this section is to answer the following 
question: given $(q,p)\in\Z_{\geq0}^2$ and $n\in\Z_{\geq0}$, for which values of the parameters $e, \bs$ 
does the abacus $\cA:=\cA( \triv, \bs )$ have bidepth $(q,p)$?
Take $\mbs=(s_1,s_2,\dots,s_\ell)\in\Z^\ell$ such that $s_1=n-e-1$, without loss of generality by Remarks \ref{remtranslationcharge1} and \ref{remtranslationcharge2} (we identify an abacus with its horizontal shifts). 
Write $n=qe+r$ with $q,r\in\N\cup\{0\}$ and $r<e$.
Set $$m=\min\{r ,\; s_j \mod e\;|\;s_j\geq 0,\; 2\leq j\leq \ell \}.$$ 
Let $(q(\triv), p(\triv))$ denote the bidepth of $\cA( \triv, \bs )$.

\begin{lemma}\label{lem6.1}
The number of free beads in $\cA$ is equal to $p(\triv)$. 
\end{lemma}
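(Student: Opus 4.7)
The plan is to construct an explicit path of $f$ successive $\te_i$-moves taking $\cA = \cA(\triv,\bs)$ to a totally $e$-periodic abacus, where $f$ denotes the number of free beads of $\cA$. By Proposition \ref{hwvchapeau}, any totally $e$-periodic abacus is an $\sle$-highest weight vertex; uniqueness of the source in a connected $\sle$-crystal component, together with the fact that each $\te_i$-move decreases the partition size by exactly one, will then yield $p(\triv) = |\triv| - |\overline{\bla}| = f$.

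First I would analyze the structure of $\cA$. With $s_1 = n-e-1$, row $1$ consists of a top block of $n$ consecutive beads at positions $n-e, n-e-1, \ldots, 1-e$, a single empty position at $\beta = -e$, and every $\beta \leq -e-1$ occupied; for $j\geq 2$, row $j$ is fully packed below $s_j$. A direct application of Lemma \ref{lemqper=>per} shows that the abacus obtained by erasing the top block of row $1$ from $\cA$ is already totally $e$-periodic, so every free bead of $\cA$ must lie in the top block of row $1$.

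The iteration then proceeds as follows: at step $t$, shift the bottom-most remaining free bead in the top block of row $1$ one unit to the left, closing the current gap in row $1$ and opening a new gap one position higher. After $f$ such shifts, the top block has been trimmed to $n-f$ beads, and one checks that the resulting abacus is totally $e$-periodic. The key point is that each geometric left-shift is implemented by a good $\te_i$-operator on the relevant intermediate abacus: the chosen bead must be the good left-shiftable bead of its residue class, which requires inspecting the word $w'$ encoding all left- and right-shiftable beads of that residue (in the Uglov order of Section \ref{chapeaucrystal}).

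The main obstacle is precisely this word analysis. Right-shiftable beads of the same residue as our chosen left-shiftable bead can arise from the top of each row $j \geq 2$ (at position $s_j$) and from position $-e-1$ just below the gap in row $1$; one must show that the $-+$-cancellations in $w$ never erase the symbol $-$ attached to the bead we wish to shift. This reduces to a finite residue analysis modulo $e$ depending on the values of $n$ and the $s_j$ for $j\geq 2$, and it is precisely this analysis that will subsequently justify the casework in Theorem \ref{Depth of triv}. Once it is carried out, the terminal abacus is the $\sle$-source of the component containing $\cA$, forcing $p(\triv) = f$.
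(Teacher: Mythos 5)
Your overall strategy --- repeatedly left-shift the bead of row $1$ adjacent to the gap, once per free bead, and land on a totally $e$-periodic (hence highest-weight) abacus --- is the same as the paper's, but your proposal leaves its central step unproved. You correctly identify that everything hinges on showing that the bead you shift is the \emph{good} left-shiftable bead at each of the $f$ stages (and, implicitly, that it stops being good after exactly $f$ stages), and you defer this to an unexecuted ``finite residue analysis'' of the cancellation word $w'$. That analysis is exactly the hard part, and it is delicate: after the $f$'th shift the same bead is still left-shiftable but must \emph{fail} to be good, so the $-+$-cancellations do sometimes erase your $-$, and you would have to pin down precisely when this happens (this is in effect the content of Lemma \ref{lem6.2} and the casework of Theorem \ref{Depth of triv}). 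As written, the argument is a plan rather than a proof at the critical juncture.

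The paper closes this step with an observation that makes the word analysis unnecessary: $b_n^1$ is the \emph{only} left-shiftable bead of $\cA$ (row $1$ has a single gap, and rows $j\geq 2$ are fully packed), so a good left-shiftable bead exists if and only if $b_n^1$ is it; hence $b_n^1$ is good left-shiftable $\Leftrightarrow$ $p(\triv)>0$ $\Leftrightarrow$ $\cA$ is not an $\sle$-highest-weight vertex $\Leftrightarrow$ (by Proposition \ref{hwvchapeau} and Lemma \ref{lemqper=>per}) $\cA$ is not totally $e$-periodic $\Leftrightarrow$ $\cA$ has a free bead. One then only needs that the shift decreases the free-bead count by exactly one, and induction finishes the proof. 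I would also flag a smaller logical slip in your structural analysis: from ``$\cA$ minus the top block is totally $e$-periodic'' you cannot conclude that every free bead of $\cA$ lies in the top block, because the fore periods of $\cA$ do not restrict to periods of the truncated abacus --- when some $s_j\geq 0$ the first fore period of $\cA$ typically mixes the top block of row $1$ with beads from higher rows. Fortunately that claim is not needed once you exploit the uniqueness of the left-shiftable bead.
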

\begin{proof} We will induct on the number of free beads. By Theorem \ref{hwvchapeau}, $p(\triv)=0$ if and only if $\cA$ is totally periodic, which is the case if and only if the number of free beads is equal to $0$. The bead $b_n^1$ is the only left-shiftable bead in $\cA$. Then by Theorem \ref{thmchapeaucrystal}, $b_n^1$ is good left-shiftable if and only if $p(\triv)>0$, which holds if and only if $\cA$ has free beads. It is easy to see that if $\cA$ has free beads then $b_n^1$ must be a free bead, and that $b_{n+1}^1$ is never a free bead in the abacus of $((1^n),\emp,\dots,\emp)$ for any charge $\mbs$. Shifting $b_n^1$ to the left then reduces both $p(\triv)$ and the number of free beads by $1$, and now the statement is true by induction.
\end{proof}

\begin{lemma}\label{lem6.2} Suppose $s_j\geq 0$ for some $j\geq 2$. Then the number of free beads in $\cA$ is equal to $m$. 
\end{lemma}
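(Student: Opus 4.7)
The plan is to compare $\cA$ with the auxiliary $\ell$-abacus $\cA' := \cA(\bemp, (n-e, s_2, \dots, s_\ell))$, which differs from $\cA$ only by containing the single extra bead $(-e, 1)$. By Proposition \ref{hwvchapeau}, $\cA'$ is totally $e$-periodic; by Remark \ref{remforeaft} its fore periods coincide with its $e$-periods of Definition \ref{defperiodJL}, so they form a unique sequence $P'_1 > P'_2 > \cdots$. Let $P^* = P'_{k_0}$ be the period containing $(-e, 1)$, let $i_0$ be the position of $(-e, 1)$ in $P^*$, and let $i^*$ be the smallest index at which $P^*$ lies in row $1$; note $i^* \leq i_0$.

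The main combinatorial statement I would establish is that $\cA$ has exactly $i_0 - i^*$ free beads, located in row $1$ at columns $-e+1, -e+2, \dots, -e+(i_0-i^*)$. I would prove this by comparing the largest-quasiperiod constructions for $\cA$ and $\cA'$ stage by stage. For $k < k_0$, the bead $(-e, 1)$ is strictly smaller than every bead involved at those stages, so $P_k^{\cA} = P'_k$. At stage $k_0$, the greedy construction of the largest quasiperiod starting at the first bead of $P^*$ agrees with $P^*$ through position $i^* - 1$ but cannot use row $1$ at positions $i^*, \dots, i_0$, since the forced path would pass through the missing bead $(-e, 1)$. Backtracking promotes these positions to the smallest row $j \geq 2$ with $s_j \geq -e + (i_0-i^*)$; such a row exists by the hypothesis $s_j \geq 0$ for some $j \geq 2$ together with the bound $i_0 - i^* \leq e - 1$. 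The row-$1$ beads at columns $-e+1, \dots, -e+(i_0-i^*)$ are thereby orphaned, and each is genuinely free: any quasiperiod containing such an orphan as a non-terminal entry must include $(-e, 1)$, while one starting at an orphan fails to complete for the same reason.

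It then remains to identify $i_0 - i^*$ with $m$. By construction $i_0 - i^*$ counts the positions of $P^*$ lying in row $1$ strictly to the right of column $-e$. Tracing the peeling algorithm of $\cA'$ near the gap, this count is controlled on one side by $r = n \bmod e$, which dictates how the block of $n$ row-$1$ beads of $\cA'$ above column $-e$ aligns with the period structure, and on the other side by $\{s_j \bmod e : s_j \geq 0,\; j \geq 2\}$, which dictates how many initial positions of $P^*$ are absorbed by rows $j \geq 2$ before the period descends into row $1$. A direct case analysis, split according to which of $r$ and those residues attains the minimum, and according to the ordering of the charges $(n-e, s_2, \dots, s_\ell)$, yields $i_0 - i^* = \min\{r,\; s_j \bmod e \mid s_j \geq 0,\; 2 \leq j \leq \ell\} = m$ in every case.

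The hardest part is the bookkeeping in the second paragraph: verifying that the pushed-up period is genuinely the largest quasiperiod at stage $k_0$, which requires ruling out any competitor arising from alternative rows at intermediate columns, and verifying that the orphans are not inadvertently absorbed into any fore period of index larger than $k_0$. Both reduce to the structural observation that any quasiperiod visiting the orphans must visit $(-e, 1)$. The case analysis in the third paragraph is elementary but must be carried out configuration by configuration.
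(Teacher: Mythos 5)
Your route is genuinely different from the paper's: you compute the fore-period decomposition of $\cA$ directly by comparing it with the totally periodic abacus $\cA'=\cA\cup\{(-e,1)\}$, whereas the paper argues by induction on $m$, using the cancellation of the $-$ of $b_n^1$ against the $+$ of $(s_j,j)$ in the $\sle$-crystal word for the base case and the fact (via Lemma \ref{lem6.1}) that shifting the good left-shiftable bead removes exactly one free bead for the inductive step. Your headline combinatorial claim --- that the free beads are exactly the row-$1$ beads of $P^*$ at columns $-e+1,\dots,-e+(i_0-i^*)$, and that $i_0-i^*=m$ --- checks out on the examples I tried, but the argument as written has real gaps. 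The structural observation you lean on, that any quasiperiod containing an orphan as a non-terminal entry must include $(-e,1)$, is false as soon as $i_0-i^*\geq 2$: a quasiperiod can contain the orphan at column $-e+2$ in position $e-1$ and the orphan at column $-e+1$ in position $e$ without ever reaching column $-e$. Concretely, with $e=4$, $n=6$, $\mbs=(1,11)$, the set $\{(0,2),(-1,2),(-2,1),(-3,1)\}$ is a quasiperiod containing both orphans and not $(-4,1)$. What actually saves the conclusion is that any quasiperiod whose beads include an orphan has its first bead in a column $\geq 0$, and every bead of $\cA$ in columns $\geq 0$ already lies in $P_1,\dots,P_{k_0-1}$ (their first beads exceed the first bead of $P^*$, which sits in column $-e+i_0-1\leq -1$); that is the argument you need, and it is absent.

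The more serious omission is that you analyze the fore periods only up to index $k_0$. To conclude that the orphans are the \emph{only} free beads you must show that every bead outside $P_1,\dots,P_{k_0}$ and the orphans is absorbed into some fore period of index $>k_0$. But the promotion at stage $k_0$ consumes higher-row beads which, in $\cA'$, belong to periods $P'_k$ with $k>k_0$, so the decompositions of $\cA$ and $\cA'$ keep diverging after stage $k_0$; nothing in your write-up controls this cascade or rules out further stranded beads. Finally, the identification $i_0-i^*=m$ --- the only place where the hypothesis that $s_j\geq 0$ for some $j\geq 2$ and the precise definition of $m$ actually enter --- is deferred entirely to an unexecuted case analysis. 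Until the cascade is controlled and that analysis is written out, the proof is incomplete; reducing everything to Lemma \ref{lem6.1} and inducting on $m$, as the paper does, avoids all fore-period bookkeeping beyond the totally periodic case and is substantially shorter.
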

\begin{proof}
By induction on $m$. Suppose $m=0$. If $m=r=0$ then $e$ divides $n$. Then $\cA$ is totally quasiperiodic, so by Lemma \ref{lemqper=>per}, $\cA$ is totally periodic and hence the number of free beads in $\cA$ is equal to $0$. If $m=s_j\mod e=0$ for some $j\geq 2$ and $s_j\geq 0$, then take $j$ such that $s_j$ is minimal with this property. Then in the word of left- and right-shiftable $0$-beads (see Section \ref{chapeaucrystal}), the $-$ for the unique left-shiftable bead $b_n^1=(-e+1,1)$ of $\cA$ cancels with the $+$ for the right-shiftable bead $(s_j,j)$ and therefore $b_n^1$ is not good left-shiftable. It follows by Theorem \ref{thmchapeaucrystal} that $\cA$ is a highest weight vertex for the $\sle$-crystal, so by Theorem \ref{hwvchapeau} $\cA$ is totally periodic, i.e. the number of free beads is $0$. 
For the induction step: if $b_n^1$ is good left-shiftable then shifting it to the left reduces the number of free beads by $1$. Since our formulas use a charge normalized in a way depending on $n$, we must renormalize the charge $\mbs$ for $((1^{n-1}),\emp,\dots,\emp)$ to charge $\mbt$ by subtracting $1$ from every component of $\mbs$, so that $t_1=s_1-1=(n-1)-e-1$ and $t_j=s_j-1$. If $n=eq+r$ with $r>0$ then $n-1=eq+(r-1)=eq+r'$, $e>r':=r-1\geq 0$; and if $s_j>0 \mod e$ for all $s_j\geq 0$, $j\geq 2$, then $s_j-1 \mod e= s_j\mod e -1$. Therefore $$m-1=\min\{r-1,s_j-1\mod e\mid j\geq 2, s_j\geq 0\}=\min\{r', t_j\mod e\mid j\geq 2, t_j\geq 0\}$$ and by induction, the right-hand-side equals the number of free beads in $\cA(((1^{n-1}),\emp,\dots,\emp),\mbt)=\cA(((1^{n-1}),\emp,\dots,\emp),\mbs)$. Hence the number of free beads in $\cA$ is $(m-1)+1=m$.
\end{proof}

\begin{theorem}\label{thmbidepthtriv}
The bidepth of $\cA( \triv, \bs )$ is given by the following formulas: 
\begin{align*}
q(\triv)&=\begin{cases} q\mbox{ if } s_j<0\mbox{ for all }j\geq 2 \\
0\mbox{ if }s_j\geq 0\mbox{ for some }j \geq 2
\end{cases}\\
p(\triv)&=\begin{cases} r\mbox{ if }s_j<0\mbox{ for all }j\geq 2 \\
m \mbox{ if }s_j\geq 0 \mbox{ for some }j\geq 2.
\end{cases}
\end{align*}
\end{theorem}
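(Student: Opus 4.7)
The proof naturally splits into four sub-claims, one for each combination of $p(\triv)$ or $q(\triv)$ with the two cases of the sign condition on $s_j$. For $p(\triv)$, Lemma \ref{lem6.1} reduces everything to counting the free beads of $\cA:=\cA(\triv,\bs)$. When some $s_j\geq 0$, this count equals $m$ by Lemma \ref{lem6.2}. When all $s_j<0$, I would count them directly: row $1$ of $\cA$ consists of an upper block of $n=qe+r$ consecutive beads at columns $n-e,\dots,-e+1$, a gap at $-e$, and an infinite tail below; since any quasiperiod starting in row $1$ must remain in row $1$ (row indices are nonincreasing), the top block absorbs $q$ complete fore periods and leaves the lowest $r$ beads isolated. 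The sub-abacus obtained from $\cA$ by restricting row $1$ to columns $\leq -e-1$ and keeping all of rows $\geq 2$ is the abacus of the empty $\ell$-partition with charge $(-e-1,s_2,\dots,s_\ell)$, which is totally $e$-periodic and contributes no free beads. Hence $\cA$ has exactly $r$ free beads, giving $p(\triv)=r$.

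For $q(\triv)=0$ when some $s_j\geq 0$, I would show that $\cA$ is a highest weight vertex for the $\slinf$-crystal, i.e.\ $\Upsilon_k^-(\cA)=0$ for every $k\in\N$. The bead $(s_j,j)$ with $s_j\geq 0$ and $j\geq 2$ sits at a column $\geq 0$ in row $j$, and row $j$ is filled completely at all columns $\leq s_j$, in particular at every column where a row-$1$ free bead lies. Consequently the vessel $V_k$ that absorbs the free beads also contains row-$j$ beads with $s_j\geq 0$, and the minimal quasiperiod $Q_k$ of $V_k$ begins with such a row-$j$ bead, which is not left-shiftable because row $j$ is dense immediately to its left. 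Pure row-$1$ aft periods in the upper block are blocked by the adjacent period below, and pure row-$j$ aft periods for $j\geq 2$ are locked by adjacency in their own row. Combining these cases (and using Lemma \ref{lemgesperrt} in the borderline subcases) yields $\Upsilon_k^-(\cA)=0$ for every $k$, so $q(\triv)=0$.

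For $q(\triv)=q$ when all $s_j<0$, I compute the partition $\theta$ of Theorem \ref{thmdepth} via Theorem \ref{thmedge}. Using Definition \ref{defperiods}(3), $V_q$ absorbs all $r$ free row-$1$ beads, so $V_q$ is a row-$1$ block of $e+r$ consecutive beads at columns $-e+1,\dots,r$, and its minimal quasiperiod is $Q_q=\{(0,1),(-1,1),\dots,(-e+1,1)\}$. Since $(-e,1)\notin\cA$, $Q_q$ is left-shiftable, and direct inspection of $\cA':=\Upsilon_q^-(\cA)$ shows that the shift $\tilde{Q}_q$ coincides with $Q_q^{\cA'}$, yielding an edge in the $\slinf$-crystal. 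A second application of $\Upsilon_q^-$ is blocked because the infinite row-$1$ tail is now directly adjacent to $\tilde{Q}_q$. For $k<q$, $Q_k=P_k$ abuts $P_{k+1}$ on the left and is not left-shiftable. For $k>q$, the aft period $Q_k$ lies entirely in the totally $e$-periodic lower region, which is itself a source vertex of its own $\slinf$-crystal component, so no such $Q_k$ can travel upstream. Therefore $\alpha_q=1$ and $\alpha_k=0$ for all $k\neq q$, and Theorem \ref{thmdepth} gives $\theta=(1^q)$, hence $q(\triv)=|\theta|=q$.

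The main obstacle is the $r>0$ subcase of the last claim: one must carefully verify that $V_q$ correctly absorbs all $r$ free beads (and only those), identify the minimal quasiperiod $Q_q$ explicitly, and check the shift condition $\tilde{Q}_q=Q_q^{\cA'}$ in the new abacus, where the free-bead configuration migrates from $V_q$ in $\cA$ to $V_{q-1}$ in $\cA'$. The some-$s_j\geq 0$ case requires a careful case analysis of which row contains the topmost bead of each aft period $Q_k$, so the obstruction can be applied uniformly; this is precisely where the definition of $m$ as a minimum over the $s_j\bmod e$ enters.
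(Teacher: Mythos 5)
Your overall architecture coincides with the paper's: reduce $p(\triv)$ to counting free beads via Lemmas \ref{lem6.1} and \ref{lem6.2}, and compute $q(\triv)$ by locating the unique candidate left-shiftable quasiperiod $P=\{(0,1),(-1,1),\dots,(-e+1,1)\}$ and testing whether shifting it gives a crystal edge via Theorems \ref{thmedge} and \ref{thmdepth}. The $p(\triv)$ computation and the all-$s_j<0$ case of $q(\triv)$ are essentially the paper's arguments and are fine (modulo one slip: for the upstream move, Theorem \ref{thmedge}(2) requires the left shift $\tilde{Q}_q$ to equal the $q$'th \emph{fore} period $P_q$ of the new abacus, not its $q$'th aft period $Q_q^{\cA'}$ as you write twice).

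The genuine gap is in the case $s_j\geq 0$ for some $j\geq 2$. Your primary claimed obstruction --- that the minimal quasiperiod $Q_k$ of the relevant vessel begins with a row-$j$ bead and is therefore not left-shiftable --- does not cover the central subcase. Take $e=3$, $n=3$, $\mbs=(-1,0)$: there are no free beads, $\cA$ is totally periodic, and $P=\{(0,1),(-1,1),(-2,1)\}$ is itself the aft period $Q_1=P_1$, entirely in row $1$ and perfectly left-shiftable. No adjacency blocks it. The only reason $\Upsilon_1^-\cA=0$ is that after shifting, the largest quasiperiod of the new abacus starts at $(0,j)$ rather than at $(-1,1)$, so $\tilde{Q}_1$ fails to be the first fore period --- i.e.\ exactly Lemma \ref{lemgesperrt}(1) applied to the fore period headed by $(0,j)$. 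This is not a ``borderline subcase'' but the main mechanism, and it is where the paper spends most of its effort: one must show that \emph{if} $P$ is an aft period then it must equal the fore period $P_k$ (the paper rules out $P=Q_k\neq P_k$ by a separate free-bead argument that you do not address at all), and \emph{then} that $(0,j)$ for $j$ minimal with $s_j\geq 0$ heads $P_{k+1}$ so that Lemma \ref{lemgesperrt} kills the shift. Without these two steps your proof of $q(\triv)=0$ does not go through; the statements ``pure row-$1$ aft periods in the upper block are blocked by the adjacent period below'' and ``the minimal quasiperiod of $V_k$ begins with a row-$j$ bead'' are, as written, false in examples like the one above.
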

\begin{proof}
\textit{Proof of the formula for $q(\triv)$.} If $e>n$ then obviously $\triv$ is a highest weight vertex for the $\slinf$-crystal by Theorem \ref{thmedge}, as there is no space for a bead in row $2$ or higher to shift to the left, and any quasiperiod containing $\{b_1^1,\dots,b_1^n\}$ must contain a bead from a higher row since $e>n$. Then $q(\triv)=0=q$. 

For the remainder of the proof of the $q(\triv)$ formula, assume that $e\leq n$. Let $$P=\{b_{n-e+1}^1,\dots,b_n^1\}=\{(0,1),(-1,1),(-2,1),\dots,(-e+1,1)\},$$ a quasiperiod of $\cA$ consisting of the $e$ beads directly to the right of the space in row $1$. Observe that $P$ is the unique left-shiftable quasiperiod of $\cA$. Furthermore, since $(\beta(b_n^1)-2,1)\in\cA$, $(\Upsilon_k^-)^2\cA=0$. It then  follows by Theorems \ref{thmedge} and \ref{thmdepth} that either (a) $q(\triv)=0$, or (b) $P=Q_k$ for some $k\geq 1$ and the left shift $\tilde{P}$ of $P$ is the $k$'th fore period of $(\cA\setminus P)\cup\tilde{P}$, in which case $\theta=(1^k)$ and $q(\triv)=k$.

First, suppose that $s_j<0$ for all $j$. Then the first $q$ fore periods are chains of $e$ successive beads in row $1$ and $P=Q_q$ is the $q$'th aft period. Let $\tilde{Q}_q$ be the left shift of $Q_q$. If $b=(\beta,j)\in\cA$ with $j\geq 2$ then $\beta\leq -1$. The first bead of $\tilde{Q}_q$ is $(\beta(b_{n-e+1}^1)-1,1)=(-1,1)$. Thus $\tilde{Q}_q>Q$ for any quasiperiod $Q$ of $\cA$ whose first bead belongs to a row $j>1$, and so $\tilde{Q}_q$ is the $q$'th fore period of $(\cA\setminus Q_q)\cup \tilde{Q}_q$. Therefore $q(\triv)=q$. 

Next, suppose that $s_j\geq 0$ for some $j\geq 2$. If $P$ is not an aft period then $q(\triv)=0$, so suppose that $P=Q_k$ for some $k$. Either $P$ is also a fore period or it is not. If $P=P_k$ then for all $1\leq \beta \leq e-1$ and all $j\geq 2$, $(\beta,j)$ cannot be the first bead of a fore period $P_a$, because otherwise the bead $(0,1)$ would belong to $P_a$ and not to $P_k$. Since $s_j\geq 0$ for some $j\geq 2$, $(0,j)\in\cA$. Take $j$ minimal with this property. It follows that $(0,j)$ is the first bead of $P_{k+1}$. Then by Lemma \ref{lemgesperrt}, $\Upsilon_k^-\cA=0$, and since $Q_k$ is the only left-shiftable quasiperiod, we conclude that $q(\triv)=0$. 

On the other hand, if $P\neq P_k$ then beads $\{(0,1),(-1,1),\dots,(-\alpha,1)\}$ for some $0\leq \alpha<e-1$ are beads of $P_k$ and beads $\{(-\alpha-1,1),\dots,(-e+1,1)\}$ are free beads. Pick $j\geq 2$ minimal with $s_j\geq 0$. Then $(0,j)$ belongs to some fore period $P_m$. If the first bead $b_m^{(1)}$ of $P_m$ is to the right of the first bead $b$ of $P_k$, i.e. if $\beta(b_m^{(1)})>\beta(b)$, then $k>m$, but then a larger quasiperiod than $P_m$ not intersecting any $P_a$, $a<m$, could be constructed using $(0,1),(-1,1)$, ... instead of $(0,j)$, etc., contradicting the definition of $m$'th fore period. If $\beta(b_m^{(1)})<\beta(b)$ then the bead $(-\alpha-1,1)$ would belong to $P_m$ and would not be free. So $\beta(b_m^{(1)})=\beta(b)$, and thus $\beta(b_m^{(e)})=-\alpha$. Then $\beta(b_{k+1}^{(e)})=-\alpha$ also since $P_m\leq P_{k+1}<P_k$. But then $P$ is not an aft period $Q_k$ as its free beads $b'$ do not satisfy the condition $\beta(b_{k+1}^{(e)})\leq \beta(b')$ in Definition \ref{defperiods}\ref{vessel}. So in this situation, $P\neq Q_k$ and thus $q(\triv)=0$.

\textit{Proof of the formula for $p(\triv)$.} The case $s_j\geq 0$ for some $j\geq2$ follows from Lemmas \ref{lem6.1} and \ref{lem6.2}. Consider the case $s_j<0$ for all $j$. Then the first $q$ fore periods of $\cA$ are $P_1=\{b_1^1,\dots, b_e^1\}$, $P_2=\{b_{1+e}^1,\dots b_{2e}^1\}$,... $P_q=\{b_{1+(q-1)e}^1,\dots b_{qe}^1\}$. If $r=0$, then peeling off $P_1,...,P_q$ off $\cA$ yields $\cA(\bemp,\mbs)$, which by Theorem \ref{hwvchapeau} implies $p(\triv)=0$. If $r>0$, beads $b_n^1=(-e+1,1),\dots, b_{n-r+1}^1=(-e+r,1)$ are the free beads because $(-e,1),(0,j)\notin\cA$ for all $j\geq 2$. By Lemma \ref{lem6.1}, $q(\triv)=r$. 
\end{proof}

\begin{remark}\label{remsupporttriv} 
By the results of Shan, Vasserot, and Losev explained in Sections \ref{shancat} and \ref{amu}, Theorem \ref{thmbidepthtriv}
has a representation theoretic meaning: it gives the support of the irreducible representation $\el(\triv)\in\oh_c(G(\ell,1,n))$,
where $c$ is the parameter for the Cherednik algebra determined by $(e,\mbs)$ as in Sections \ref{RCAparams1} and \ref{RCAparams2}.
Conversely, Theorem \ref{thmbidepthtriv} also permits the description of all $n$ and all parameters $c$ (or equivalently $(e,\bs)$) 
such that $\el(\triv)\in\oh_c(G(\ell,1,n))$ has a given support.
\end{remark}

Consequently, we deduce the set of parameters (without normalizing $\mbs$) such that the spherical representation $\el(\triv)$ is finite-dimensional. Let $n\in\N$.
\begin{corollary}\label{cortrivfd}
The set of parameters $(e,\mbs)\in\Z\times\Z^\ell$ such that $\el(\triv)\in\oh_c(G(\ell,1,n))$ is finite-dimensional consists of all $(e,\mbs)$ satisfying either
\begin{itemize}
 \item[(i)] $s_j-s_1=ke-n+1$ for some $k\in\N$ and some $2\leq j\leq \ell$, or 
 \item[(ii)] the following two conditions hold:
 \begin{itemize}
\item[(a)] $e$ divides $n$, and 
\item[(b)] $s_j-s_1\geq e-n+1$ for some $2\leq j\leq \ell$.
\end{itemize}
\end{itemize}
\end{corollary}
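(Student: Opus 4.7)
The plan is to translate the representation-theoretic statement "$L(\triv)$ is finite-dimensional" into a purely combinatorial condition on $\cA(\triv,\mbs)$, apply Theorem \ref{thmbidepthtriv}, and then undo the normalization to re-express everything in terms of $s_j - s_1$. By Section \ref{findim}, $L(\triv) \in \oh_c(G(\ell,1,n))$ is finite-dimensional if and only if $\triv$ has depth zero in both the $\sle$-crystal and the $\slinf$-crystal, i.e.\ $p(\triv) = q(\triv) = 0$. By Remarks \ref{remtranslationcharge1} and \ref{remtranslationcharge2} both depths are invariant under $\mbs \mapsto \mbs + (t,\dots,t)$, so I can freely set $t := n - e - 1 - s_1$, putting us in the normalized setting of Theorem \ref{thmbidepthtriv}. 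Writing $\mbs'$ for the translated charge, we have $s_1' = n-e-1$ and $s_j' = s_j - s_1 + n - e - 1$ for $j\geq 2$. In particular $s_j' \geq 0$ if and only if $s_j - s_1 \geq e - n + 1$.

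Write $n = qe + r$ with $0 \leq r < e$. I will run Theorem \ref{thmbidepthtriv} in its two cases. If $s_j' < 0$ for all $j \geq 2$, then $(q(\triv),p(\triv)) = (q,r)$ and both vanish only when $n=0$, contradicting $n\in\N$; so this case never produces a finite-dimensional $L(\triv)$. Otherwise some $s_j' \geq 0$, and then $q(\triv) = 0$ automatically, while $p(\triv) = m = \min\{r,\;s_j'\bmod e \mid s_j'\geq 0,\,2\leq j\leq\ell\}$. Thus the task reduces to characterizing when $m = 0$.

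Now $m = 0$ holds in exactly two (possibly overlapping) ways. Either $r = 0$, in which case combined with the standing hypothesis "some $s_j' \geq 0$" I recover condition (ii): $e \mid n$ and $s_j - s_1 \geq e-n+1$ for some $j \geq 2$. Or $s_j' \geq 0$ and $s_j' \equiv 0 \pmod e$ for some $j$, i.e.\ $s_j' = (k-1)e$ for some $k \in \N$, which after substitution reads $s_j - s_1 = ke - n + 1$ for some $k \in \N$, which is condition (i). Conversely, (i) forces $s_j' = (k-1)e \geq 0$ with $s_j' \bmod e = 0$, and (ii) forces $r = 0$ together with some $s_j' \geq 0$; in both cases $m = 0$ follows, so $p(\triv) = 0$ and thus $L(\triv)$ is finite-dimensional.

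The argument is essentially a bookkeeping exercise once Theorem \ref{thmbidepthtriv} is in hand; the only place where care is required is tracking the translation $t = n - e - 1 - s_1$ through the congruence $s_j' \bmod e$, and in particular verifying that the range "$k \in \N$" in condition (i) corresponds precisely to the nonnegative multiples $s_j' \in \{0, e, 2e, \dots\}$ (here $k = 1$ corresponds to $s_j' = 0$). That bijection is the one step worth checking explicitly; the rest follows by direct substitution.
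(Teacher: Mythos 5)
Your proposal is correct and follows essentially the same route as the paper: apply Theorem \ref{thmbidepthtriv} after translating the charge to the normalized form $s_1'=n-e-1$, observe that finite-dimensionality forces the case "some $s_j'\geq 0$" (so $q(\triv)=0$ automatically), and then unwind the condition $m=0$ into the two branches (i) and (ii), with the translation $s_j'=(k-1)e \leftrightarrow s_j-s_1=ke-n+1$ handled correctly. The only difference is presentational — the paper first records intermediate conditions (i'), (ii') and then checks the Boolean equivalence, while you compute directly — and your explicit dismissal of the "all $s_j'<0$" case (using $n\geq 1$) is a small point the paper leaves implicit.
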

\begin{proof}
As explained in Section \ref{findim}, a simple module $\el(\bla)$ in $\oh_c(G(\ell,1,n)$ is finite-dimensional if and only if $q(\bla)=p(\bla)=0$.
From Theorem \ref{thmbidepthtriv}, we deduce that the module $\el(\triv)$
is finite-dimensional if and only if the following two conditions hold:
\begin{itemize}
 \item[(i')] $s_j-s_1\geq e-n+1$ for some $2\leq j\leq \ell$, and
 \item[(ii')] either 
 \begin{itemize}
\item[(a')] $e$ divides $n$, or
\item[(b')] $s_j-s_1 =-n+1 \mod e$ for some $2\leq j\leq \ell$ such that $s_j-s_1\geq e-n+1$.
\end{itemize}
\end{itemize}
Note that (a) is the same as (a'), (b) is the same as (i'), and that 
(b') is equivalent to (i). Moreover, (b') trivially implies (i'),
so ((i') and (ii')) is equivalent to ((i) or (ii)).
\end{proof}

\begin{remark}\label{remcompotherresults}
The problem of determining the parameters $c$ such that the $H_c(W)$-representation $\el(\triv)$ is finite-dimensional has been studied by a number of authors. We list prior results intersecting the case of $W=G(\ell,1,n)$. The first result in this direction was Berest, Etingof and Ginzburg's theorem for $W=S_n=G(1,1,n)$, which states that $\el(\triv)$ is finite-dimensional if and only if $c=r/n$ for some $r\in\N$ with $\mathrm{gcd}(r,n)=1$ \cite{BEG}. Using methods of geometric representation theory, Varagnolo and Vasserot showed several years later that for $W$ a Weyl group, $\el(\triv)$ is a finite-dimensional $H_c(W)$-module for equal parameters $c$ if and only if $c=r/e$ for $e$ an elliptic number of $W$, $r\in\N$, $\mathrm{gcd}(r,e)=1$ \cite{VV}. Etingof extended Varagnolo and Vasserot's result to unequal parameters and to the more general question of the support of $\el(\triv)$, thus giving a complete answer for $W=B_n=G(2,1,n)$ for any pair of parameters \cite{Etingof2012}. More recently, Griffeth, Gusenbauer, Juteau, and Lanini 
developed a ``degenerate" version of Bezrukavnikov-Etingof parabolic restriction aimed towards studying finite-dimensional representations \cite{GGJL}; this produced a sufficient condition for $\el(\triv)$ to be finite-dimensional for  $W=G(\ell,1,n)$ \cite[Corollary 5.4]{GGJL}. They followed this with the natural question: is the condition of \cite[Corollary 5.4]{GGJL} also necessary for $\el(\triv)$ to be finite-dimensional? \cite[Question 5.5]{GGJL}. 
We now check that our Corollary \ref{cortrivfd} agrees with the answers for $\ell=2$ in \cite{Etingof2012} and for $\ell\geq 2$ in \cite{GGJL}, giving a positive answer to \cite[Question 5.5]{GGJL}. Note that we have restricted ourselves to the case of integral charges for the Fock space and $\kappa=1/e$ instead of $r/e$, but the general case can be reduced to this case, see Section \ref{integralpar}.

In the case of $G(2,1,n)=B_n$, the change of parameters of Section \ref{RCAparams2} directly translates Corollary \ref{cortrivfd} into Etingof's criterion \cite[Section 4.2]{Etingof2012} for the set of parameters $c=(c_1,c_2)$ such that $\el(\triv)$ is finite-dimensional. If $(e,(s_1,s_2))$ is a Fock space charge, then $c_1=1/e$ and $c_2=(s_2-s_1)/e-1/2$. Corollary \ref{cortrivfd} (i) gives $c_2=(ke-n+1)/e-1/2$ for some $k\in\N$, and therefore $$c_1(n-1)+c_2=(n-1)/e+c_2=(2k-1)/2$$ for some $k\in\N$, which are the lines described by Etingof in \cite[Section 4.2]{Etingof2012}. In the case that $e$ divides $n$, Corollary \ref{cortrivfd} (ii) gives additional parameters at which $\el(\triv)$ is finite-dimensional: those not already included in (i) consist of $(s_1,s_2)$ with $s_2-s_1\geq e-n+1$ and $s_2-s_1+(n-1)\neq 0 \mod e$. Write $s_2-s_1=ke-n+1+a$ for $1\leq a\leq e-1$, $k\in\N$. Write $n=er$. Changing to Cherednik parameters, 
\begin{align*}(c_1,c_2)&=(1/e,(s_2-s_1)/e-1/2)=(1/e,(2k-1)/2-n/e+(1+a)/e)\\&=(r/n,(2k-1)/2-r+r(1+a)/n)=(r/n,p/2-r+(rs)/n)
\end{align*}
with $p$ an odd positive integer and $2\leq s\leq e=n/r=n/\mathrm{gcd}(r,n)$. These are exactly the isolated points of \cite[Section 4.2]{Etingof2012} where $\el(\triv)$ is finite-dimensional, when $c_1=1/e$.

In the general case of $G(\ell,1,n)$, we compare our result with the sufficient condition for $\el(\triv)$ to be finite-dimensional given in \cite[Corollary 5.4]{GGJL}. The surface appearance of their criterion is slightly different as they use a different parametrization of the Cherednik algebra. We check that after 
reparametrization, it is consistent with Corollary \ref{cortrivfd}. In our notation, the condition of \cite[Corollary 5.4]{GGJL} is that either
\begin{itemize}
 \item[(i')] $\ell(n-1)/e + d_0-d_j=-j+k\ell$ for some $1\leq j\leq \ell-1$ and some positive integer $k$, or 
 \item[(ii')] $e$ divides $n$ and $\ell(p-1)/e + d_0-d_j=-j+k\ell$ for some $1\leq j\leq \ell-1$, some positive integer $k$, and some $n-e+1\leq p \leq n$
\end{itemize}
where the parameters $d_j$, $0\leq j\leq \ell-1$, are related to the parameters $s_{j+1}$, $0\leq j\leq \ell-1$ via the formula
$$ (1+d_{j-1}-d_j)/\ell = h_j = (s_{j+1}-s_j)/e \text{ \quad for all } 1\leq j\leq \ell-1$$
(where $h_j$ is given in Section \ref{RCAparams2}).
Therefore, we have
\begin{align*}
(i') & \eq  \ell(n-1)/e + d_0-d_j=-j+k\ell \\
& \eq \frac{\ell}{e}(n-1) + \frac{\ell}{e}(s_{j+1}-s_1) -j = -j+k\ell \\
& \eq  s_{j+1}-s_1 = ke -n+1 \\
& \eq  (i).
\end{align*}
Similarly, 
\begin{align*}
(ii') & \eq  \ell(p-1)/e + d_0-d_j=-j+k\ell \\
& \eq  s_{j+1}-s_1 = ke -p+1 \\
& \eq  (ii)
\end{align*}
We see that the condition in \cite[Corollary 5.4]{GGJL} is not only sufficient, but also necessary.
Note finally that this is proved independently in Griffeth and Juteau's latest work \cite{GriffethJuteau2017}.
\end{remark}

\subsection{Depth of a rectangular partition concentrated in one component in the $\slinf$-crystal}
The part of Theorem \ref{Depth of triv} concerning depth in the $\slinf$-crystal may be generalized to multipartitions $\bla$ such that one component is a rectangle and all the other components are empty.

\begin{theorem}\label{thmdepthrectangle} Let $m$ and $n$ be nonnegative integers and let $\bla$ be an $\ell$-partition of $mn$ such that $\lambda^a=(m^n)$ and $\lambda^j=\emp$ for all $j\neq a$. Normalize any charge $\mbs$ so that $s_a=n-e-m$. Write $n=qe+r$ with $q,r\in\Z_{\geq0}$ and $r<e$. 
Set $$t'=\begin{cases} \max\{s_j,s_{j'}+e\;|\; j>a,\;j'<a\} \mbox{ if all } s_j,s_{j'}+e<0\\
0 \mbox{ otherwise }
\end{cases}$$ 
and set $t=\min\{-t',m\}$. Then $q(\bla)=tq$.
\end{theorem}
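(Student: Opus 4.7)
The plan is to apply Theorems \ref{thmedge} and \ref{thmdepth} by identifying the fore and aft periods of $\cA := \cA(\bla,\mbs)$ directly, and counting the upstream moves at each period. The argument splits according to whether $t'=0$ or $t'<0$. In the case $t'=0$, one shows $q(\bla)=0$ by essentially the same analysis as in the $s_j\geq 0$ branch of Theorem \ref{thmbidepthtriv}: every potential leftward shift of an aft period is obstructed, either directly by a bead in row $j>a$ at $\beta\geq 0$ or in row $j'<a$ at $\beta\geq -e$, or indirectly by Lemma \ref{lemgesperrt}, forcing $\alpha_k=0$ for all $k$.

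For the substantive case $t'<0$, I would first describe $\cA$ concretely: row $a$ consists of the upper block at $\beta\in[-e+1,n-e]$, an empty block at $\beta\in[-e-m+1,-e]$ of width $m$, and an infinite lower block at $\beta\leq -e-m$; rows $j\neq a$ have their topmost bead at $\beta=s_j$, with $s_j<0$ for $j>a$ and $s_{j'}<-e$ for $j'<a$. Under these constraints no bead outside row $a$ can compete with the top row-$a$ beads, and one shows by induction on $k$ that the first $q$ fore periods are the pure row-$a$ periods $P_k=\{(\beta,a)\mid \beta\in[n-(k+1)e+1,\,n-ke]\}$ for $k=1,\ldots,q$. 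A direct check using Definition \ref{defperiods} shows that $V_k=P_k$ for $k<q$, so $Q_k=P_k$, and the left shift is blocked because the neighbour at $(n-(k+1)e,a)$ of the leftmost bead of $P_k$ already lies in $P_{k+1}$; hence $\alpha_k=0$ for $k<q$. For $k=q$, iterating condition (c) of Definition \ref{defperiods}(\ref{vessel}) pulls all $r$ leftover row-$a$ beads at $\beta\in[-e+1,r-e]$ into $V_q$, yielding $Q_q=\{(\beta,a)\mid \beta\in[-e+1,\,0]\}$.

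The core of the proof is then to show $\alpha_q=t$. Write $\cA^{(k)}$ for the abacus obtained by shifting $Q_q$ (and its successive images) leftward $k$ times when defined. For $k\leq m$ the leftmost target column $(-e-k+1,a)$ sits in the empty block of row $a$, so the shift is not blocked by a bead; and for $k\leq -t'$, one verifies that the shifted quasiperiod $\tilde{Q}_q^{(k)}$ is indeed the $q$-th fore period of $\cA^{(k)}$. Indeed, the two possible types of competing quasiperiod in $\cA^{(k)}$ — one whose \emph{first} bead is $(s_j,j)$ for some $j>a$ with $s_j=-k+1$, and one whose \emph{last} bead is $(s_{j'},j')$ for some $j'<a$ with $s_{j'}=-k-e+1$ — require respectively $k\geq -s_j+1$ and $k\geq -s_{j'}-e+1$, both of which fail when $k\leq -t'$. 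For $k=t+1$ one of the two obstructions kicks in: either $k>m$ and the target column is already occupied in row $a$, or $k>-t'$ and the corresponding competitor becomes available, giving a strictly larger quasiperiod so that $\Upsilon_q^-$ kills $\cA^{(t)}$ by Theorem \ref{thmedge} (Lemma \ref{lemgesperrt} handles the latter cleanly). This yields $\alpha_q=\min(-t',m)=t$. A similar but more mechanical analysis of the remaining fore periods $P_k$ for $k>q$, again based on Lemma \ref{lemgesperrt}, shows $\alpha_k=0$ for $k>q$.

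Combining everything via Theorem \ref{thmdepth}, only $\alpha_q=t$ is nonzero, whence $\theta^t=(q^t)$, $\theta=(t^q)$, and $q(\bla)=|\theta|=tq$. The main technical obstacle is the competitor bookkeeping in the third paragraph: one must pinpoint which bead of which row first produces a quasiperiod strictly larger than the shifted one, treating uniformly the contributions from rows above $a$ (responsible for the $s_j$ term of $t'$) and from rows below $a$ (responsible for the $s_{j'}+e$ term). The extra $+e$ in the $j'<a$ case can be traced to the fact that the competing bead appears at the \emph{end} of the competing quasiperiod rather than at its beginning, so it has to travel $e$ more units of $\beta$ before meeting the shifted period.
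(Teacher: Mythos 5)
Your proposal is correct and follows essentially the same route as the paper's proof: reduce to the single left-shiftable quasiperiod $Q$ at the left end of the row-$a$ block, identify the first $q$ fore periods as the pure row-$a$ periods with $Q=Q_q$, count the admissible upstream shifts as $\min\{m,-t'\}$ via Lemma \ref{lemgesperrt}-type obstructions from rows above and below $a$, and conclude $\theta=(t^q)$ by Theorem \ref{thmdepth}. Your competitor bookkeeping in the third paragraph just makes explicit what the paper compresses into the sentence ``the number $t$ is by construction the number of times $Q$ can be shifted to the left without the situations of Lemma \ref{lemgesperrt} occurring.''
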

\begin{proof}
Let $\cA:=\cA(\bla,\mbs)$. As in the proof of Theorem \ref{thmbidepthtriv}, $q(\bla)$ can only be nonzero if 
 $$Q~=~\{(0,a),(-1,a),(-2,a),\dots,(-e+1,a)\}=\{b_{n-e+1}^a,b_{n-e+2}^a,\dots,b_n^a\}$$ is an aft period. When $n<e$ then $Q$ is not an aft period and $q=0$, so the statement $q(\bla)=tq$ is true as both sides are $0$. For the rest of the proof suppose $q>0$. 
 
 First, consider the case that $s_j<0$ for all $j>a$ and $s_{j'}<-e$ for all $j'<a$. Then $(\beta,j)\notin\cA$ for all $\beta\geq 0$, $(\beta',j')\notin\cA$ for all $\beta'\geq -e$. This implies that the first $q$ fore periods of $\cA$ are $P_k:=\{b_{(k-1)e+1}^a,b_{(k-1)e+2}^a,\dots,b_{ke}^a\}$, $k=1,\dots,q$ and that $Q=Q_q$ is the $q$'th aft period. $Q$ can be shifted to the left up to $m$ times, and each left shift gives an edge in the crystal so long as the situations of Lemma \ref{lemgesperrt} do not occur. The number $t$ is by construction the number of times $Q$ can be shifted to the left without the situations of Lemma \ref{lemgesperrt} occurring. Theorem \ref{thmdepth} then implies $\theta=(t^q)$ and hence $q(\bla)=tq$.

Next, suppose $s_j\geq 0$ for some $j>a$. Then by arguments identical to those used in the proof of Theorem \ref{thmbidepthtriv}, $q(\bla)=0$. Suppose $s_{j'}\geq -e$ for some $j'<a$. Definition \ref{defperiods} implies that if $Q$ is an aft period $Q_k$ then $b_n^a$ does not lie above a bead $b$ belonging to $P_m$ for $m>k$. On the other hand, by assumption $(-e,j')\in\cA$ and it belongs to some fore period. Therefore if $Q$ is an aft period, then $(-e,j')$ is the first bead of the fore period that it belongs to.  Then by Lemma \ref{lemgesperrt}, shifting $Q$ to the left does not give an edge in the $\slinf$-crystal.
\end{proof}

\begin{corollary} Suppose that $|\bla,\mbs\rangle$ is as in the theorem and $e$ divides $n$. Then $\el(\bla)$ is finite-dimensional if and only if $s_j\geq 0$ for some $j>a$ or $s_{j'}\geq -e$ for some $j'<a$.
\end{corollary}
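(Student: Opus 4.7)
The plan uses the criterion recalled in Section~\ref{findim} that $\el(\bla)$ is finite-dimensional if and only if both $p(\bla) = 0$ and $q(\bla) = 0$. I would first read off when $q(\bla) = 0$ directly from Theorem~\ref{thmdepthrectangle}, and then separately argue that $p(\bla) = 0$ is automatic under the hypothesis $e \mid n$, so that the corollary reduces to the $\slinf$-condition on $\mbs$.

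For the first step, Theorem~\ref{thmdepthrectangle} gives $q(\bla) = tq$ where $q = n/e$ (using $r=0$ since $e \mid n$) and $t = \min\{-t', m\}$. In the nontrivial case $n, m \geq 1$ we have $q \geq 1$, so $q(\bla) = 0 \iff t = 0 \iff t' = 0$, using that $t' \leq 0$ by its definition. Reading off the definition of $t'$, the vanishing $t' = 0$ is precisely the condition that either $s_j \geq 0$ for some $j > a$ or $s_{j'} \geq -e$ for some $j' < a$.

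The second step is to show $p(\bla) = 0$, i.e.\ that $\cA := \cA(\bla, \mbs)$ is totally $e$-periodic (Proposition~\ref{hwvchapeau}); by Lemma~\ref{lemqper=>per} it suffices to exhibit a decomposition of $\cA$ into $e$-quasiperiods. The rectangular component $\lambda^a = (m^n)$ produces a solid block $B$ of $qe$ beads in row $a$ at consecutive $\beta$-numbers $-e+1, -e+2, \ldots, n-e$. Since $e \mid n$, I would split $B$ into $q$ successive horizontal groups $R_1, \ldots, R_q$ of $e$ consecutive beads; each $R_k$ is tautologically an $e$-quasiperiod since its beads all lie in a single row. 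The remaining set $\cA \setminus B$ is the abacus of the empty $\ell$-partition with charge $\mbs^\circ$ given by $s^\circ_a = -e - m$ and $s^\circ_j = s_j$ for $j \neq a$, and it is visibly totally $e$-quasiperiodic via the horizontal-strip decomposition of each row into successive groups of $e$ beads. Concatenating these two partitions gives the desired global quasiperiod decomposition of $\cA$.

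Combining both steps yields: $\el(\bla)$ is finite-dimensional iff $q(\bla) = 0$ iff the stated condition, completing the corollary. The main (mildly surprising) point is that $p(\bla)$ does not enter the final equivalence: it always vanishes in this one-rectangular-component setup once $e \mid n$, since that divisibility is exactly what lets the block $B$ be partitioned into horizontal $e$-quasiperiods in row $a$. Everything else in the proof is bookkeeping with the formula in Theorem~\ref{thmdepthrectangle}.
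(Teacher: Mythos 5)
Your proof is correct and follows essentially the same route as the paper: the divisibility $e\mid n$ makes $\cA$ totally $e$-quasiperiodic (hence totally $e$-periodic by Lemma \ref{lemqper=>per}, so $p(\bla)=0$ by Proposition \ref{hwvchapeau}), and the charge condition is then read off as exactly $q(\bla)=0$ from the formula $q(\bla)=tq$ of Theorem \ref{thmdepthrectangle}. The only difference is that you spell out the horizontal-strip quasiperiod decomposition and the $t'=0$ bookkeeping explicitly, which the paper leaves implicit.
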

\begin{proof} In this case, $\bla$ is totally $e$-quasiperiodic, hence totally $e$-periodic by Lemma \ref{lemqper=>per}, so $\bla$ is a highest weight vertex for the $\sle$-crystal by Theorem \ref{hwvchapeau}. Applying Theorem \ref{thmdepthrectangle} gives the additional condition for $\bla$ to be a highest weight vertex for the $\slinf$-crystal.
\end{proof}

\subsection{Depth of $(\lambda,\emp,\dots,\emp)$ in the $\slinf$-crystal for any partition $\lambda$}
Throughout this subsection, $\lambda=(\lambda_1^{a_1},\lambda_2^{a_2},\lambda_3^{a_3},\dots)$ denotes a partition with $\lambda_i$ being the distinct parts of $\lambda$ and $a_i$ their multiplicities, and we study the position in the $\slinf$-crystal of $\bla:=(\lambda,\emp,\dots,\emp),$ an $\ell$-partition all of whose nonzero parts are in the first component.

\begin{lemma}\label{lemmalambda1} $q(\bla)=0$ if and only if  
either (i) $e>\max\{a_i\}$, or (ii) taking $\lambda_i$ to be maximal so that $a_i\geq e$, position $(\beta_{N_i-e+1}^1,j)$ is occupied by a bead for some $j\geq 2$, where $N_i:=\sum_{m=1}^ia_m$. 
\end{lemma}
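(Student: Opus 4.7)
The plan is to combine Theorem \ref{thmedge} with the observation that $\bla$ has a very restricted shape. Since $\lambda^j=\emp$ for $j\geq 2$, each row $j\geq 2$ of $\cA := \cA(\bla,\mbs)$ is completely packed with beads at every $\beta\leq s_j$, so no such bead has an empty space immediately to its left. Propagating this through the definition of a quasiperiod, exactly as in the proof of Theorem \ref{thmbidepthtriv}, shows that every bead of a left-shiftable quasiperiod of $\cA$ must lie in row $1$.

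In row $1$, successive beads $b_k^1$ and $b_{k+1}^1$ are $\beta$-adjacent precisely when $\lambda_k=\lambda_{k+1}$, so an $e$-quasiperiod entirely in row $1$ requires a run of $e$ consecutive equal parts. Inside the length-$a_m$ run $\{b^1_{N_{m-1}+1},\dots,b^1_{N_m}\}$, the only choice of $e$ successive beads whose leftmost bead has an empty space to its left is $Q^{(m)}:=\{b^1_{N_m-e+1},\dots,b^1_{N_m}\}$; any earlier choice has another bead of the same run just to the left of its last bead. Thus the left-shiftable quasiperiods of $\cA$ are exactly the $Q^{(m)}$ with $a_m\geq e$. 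If $e>\max\{a_m\}$ there are none, no aft period can travel upstream, and by Theorem \ref{thmedge}, $q(\bla)=0$; this establishes (i) $\Rightarrow q(\bla)=0$.

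Now assume $\max\{a_m\}\geq e$ and let $i$ be minimal with $a_i\geq e$, so $\lambda_i$ is the largest distinct part of $\lambda$ of multiplicity at least $e$. For each such $m$, the first bead of $Q^{(m)}$ sits at column $\beta^1_{N_m-e+1}=\lambda_m+s_1-N_m+e$, and because $\lambda_m\leq\lambda_i$ and $N_m\geq N_i$, we have $\beta^1_{N_m-e+1}\leq \beta^1_{N_i-e+1}$. For the direction (ii) $\Rightarrow q(\bla)=0$, I will apply Lemma \ref{lemgesperrt}(1). Condition (ii) gives some $j\geq 2$ with $s_j\geq\beta^1_{N_i-e+1}$, so $s_j\geq \beta^1_{N_m-e+1}$ for every relevant $m$; hence a row-$j$ bead sits directly above the first bead of every $Q^{(m)}$. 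The remaining work is to run through the greedy enumeration in Definition \ref{defperiods} and verify that this row-$j$ bead is itself the first bead of some fore period $P_n$ of $\cA$, with $n$ exceeding the index $k$ for which $Q^{(m)}=Q_k$ (and when $Q^{(m)}$ fails to be an aft period at all it cannot contribute an upstream arrow anyway). Lemma \ref{lemgesperrt}(1) then gives $\Upsilon_k^-\cA=0$ for every relevant $m$, so no aft period travels upstream and $q(\bla)=0$.

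For the converse --- if (i) and (ii) both fail then $q(\bla)\geq 1$ --- the failure of (ii) forces $s_j<\beta^1_{N_i-e+1}$ for every $j\geq2$, so the columns in the range $[\beta^1_{N_i},\beta^1_{N_i-e+1}]$ contain only the row-$1$ beads of $Q^{(i)}$. The plan is to check directly from Definition \ref{defperiods} that $Q^{(i)}$ is an aft period $Q_k$ of $\cA$ and that its left shift $\tilde{Q}^{(i)}$ is the $k$-th fore period of the shifted abacus; Theorem \ref{thmedge} then produces an incoming arrow and hence $q(\bla)\geq 1$. The main obstacle in both directions is precisely this fore-period book-keeping around the critical column $\beta^1_{N_i-e+1}$: showing under (ii) that the row-$j$ blocking bead truly heads a later fore period, and under the negation of (ii), that $Q^{(i)}$ really is an aft period whose left shift recovers the matching fore period. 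Both come down to unwinding the greedy procedure of Definition \ref{defperiods}, with the entire case analysis dictated by how the various $s_j$ compare to $\beta^1_{N_i-e+1}$.
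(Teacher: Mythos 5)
Your overall strategy is the paper's: reduce to left-shiftable quasiperiods, observe that they must lie entirely in row $1$ and are exactly the sets $Q^{(m)}=\{b^1_{N_m-e+1},\dots,b^1_{N_m}\}$ with $a_m\geq e$, dispose of case (i) immediately, and treat case (ii) and its negation via Lemma \ref{lemgesperrt} and Theorem \ref{thmedge}. Those reductions are correct. The problem is that both remaining implications are announced rather than proved: you write ``the remaining work is to \dots\ verify that this row-$j$ bead is itself the first bead of some fore period $P_n$'' and ``the plan is to check directly \dots\ that $Q^{(i)}$ is an aft period $Q_k$ \dots\ and that its left shift \dots\ is the $k$-th fore period.'' These two verifications are the entire content of the lemma. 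The paper discharges them by the dichotomy carried out explicitly in the proof of Theorem \ref{thmbidepthtriv}: either the row-$1$ quasiperiod fails to be an aft period (its would-be vessel is cut off, in the sense of Definition \ref{defperiods}, by a fore period headed in a higher row), or it is an aft period $Q_k$ and the blocking bead heads a fore period $P_n$ with $n>k$, so Lemma \ref{lemgesperrt} applies. Until that case analysis is run, the ``if'' direction for (ii) and the entire ``only if'' direction remain unproved.

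There is also a concrete false statement in your converse: from $s_j<\beta^1_{N_i-e+1}$ for all $j\geq 2$ you conclude that the columns in the range $[\beta^1_{N_i},\beta^1_{N_i-e+1}]$ contain only the row-$1$ beads of $Q^{(i)}$. This fails whenever $\beta^1_{N_i}\leq s_j<\beta^1_{N_i-e+1}$ for some $j\geq 2$: since row $j$ is a packed interval, it then contributes beads to that column range, and only the column $\beta^1_{N_i-e+1}$ itself (and everything to its right) is guaranteed empty in rows $j\geq 2$. For example, with $e=2$, $\bla=((1,1),\emp)$ and $\mbs=(2,2)$, the bead $(2,2)$ lies in that range; the conclusion you want still holds ($Q^{(1)}$ does travel upstream there), but precisely these intermediate beads are what can turn $Q^{(i)}$ into a non-aft-period or absorb its left shift into a larger quasiperiod, so they cannot be assumed away --- they are exactly the bookkeeping your plan skips.
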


\begin{proof}
The depth $q(\bla)=0$ if and only if there is no aft period in row $1$ which can travel upstream in the crystal. The first way for there to be none such is if $a_i<e$ for all $i$, because then there is no uninterrupted string of $e$ beads in the first row with space to its left. The second way is if for any consecutive string of $e$ beads in row $1$ with a space to its left, it can't shift to the left without changing the other periods. As in the proof of Theorem \ref{thmbidepthtriv}, if there are beads above all the beads of such a quasiperiod, either it will fail to be an aft period, or there will be a fore period above it, which means that when it is shifted to the left, the smallest fore period above will change and take its first (shifted) bead. 
\end{proof}
 
The first result we may obtain from the lemma gives the conditions on $\mbs$ for when $\el(\lambda,\emp,\dots,\emp)\in\oh_c(G(\ell,1,n)$ is finite-dimensional, generalizing Theorem \ref{thmbidepthtriv} from the case $\lambda=(1^n)$ to an arbitrary partition of $n$.
Set $N=N_r=\sum_{i=1}^r a_i$, the number of parts of $\lambda$ including multiplicities.
Normalize $\mbs$ so that $s_1=N$: this means the rightmost bead of $\lambda$ representing a part of size $0$ has column position $0$. 
\begin{theorem}\label{firstcompfd} $\bla=(\lambda,\emp,\dots,\emp)$ is a highest weight vertex in both the $\slinf$- and $\sle$-crystals if and only if 
\begin{enumerate}
\item for each distinct nonzero part $\lambda_i$ of $\lambda$, $e\mid a_i$ or there exists $j\geq 2$ such that\\ $s_j=
ke+\lambda_i+\sum_{t=i+1}^r a_t$ for some $k\in\N$,
\item $s_{j'}\geq
\lambda_1+N-a_1+e$ for some $j'\geq 2$.
\end{enumerate}
\end{theorem}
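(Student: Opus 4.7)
The plan is to combine Proposition \ref{hwvchapeau} (being highest weight for $\sle$ is equivalent to total $e$-periodicity of $\cA:=\cA(\bla,\mbs)$) with Lemma \ref{lemmalambda1} (which characterizes $q(\bla)=0$ for this family of $\bla$). I will use the explicit structure of $\cA$ under the normalization $s_1=N$: row $1$ consists of an infinite sea at positions $\beta\leq 0$ followed by the $\lambda_i$ group of $a_i$ consecutive beads at positions $[\lambda_i+N-N_i+1,\,\lambda_i+N-N_{i-1}]$ for $i=1,\dots,r$, while each row $j\geq 2$ is an infinite sea $(-\infty,s_j]$.

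For the $(\Leftarrow)$ direction I will exhibit a partition of $\cA$ into $e$-periods as follows. When $e\mid a_i$, I split the $\lambda_i$ group into $a_i/e$ consecutive row-$1$ periods. When $e\nmid a_i$, condition (1) supplies some $s_{j_i}=k_ie+\lambda_i+N-N_i$ with $k_i\geq 1$, which lets me form a mixed period whose bottom $a_i\bmod e$ beads are the topmost row-$1$ beads of the $\lambda_i$ group and whose top $e-(a_i\bmod e)$ beads sit in row $j_i$ at positions immediately above the group; the remaining row-$j$ beads (for all $j\geq 2$) and the row-$1$ sea are then packed into pure sea periods, with modular consistency guaranteed by condition (1) applied across all non-$e$-divisible groups. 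Once total $e$-periodicity is in hand, I will verify $q(\bla)=0$ via Lemma \ref{lemmalambda1}: if $\max_i a_i<e$, part (i) applies; otherwise let $i_0$ be the smallest index with $a_{i_0}\geq e$. If $i_0=1$, condition (2) directly furnishes the required bead at $(\lambda_1+N-a_1+e,\,j)$; if $i_0>1$, then $0<a_1<e$ forces $e\nmid a_1$, so condition (1) applied to $\lambda_1$ yields a row-$j$ bead at position at least $\lambda_1+N-a_1+e$, which exceeds $\lambda_{i_0}+N-N_{i_0}+e$ since $\lambda_1>\lambda_{i_0}$ and $N_{i_0}>a_1$.

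For the $(\Rightarrow)$ direction, I will assume $\cA$ is totally $e$-periodic and $q(\bla)=0$, and derive (1) and (2). Fix any $i$ with $e\nmid a_i$; the $a_i$ row-$1$ beads of the $\lambda_i$ group cannot be partitioned into row-$1$ periods alone, so some mixed period $P$ must share the group. By weak decrease of rows within $P$, its row-$1$ beads form a contiguous bottom tail, and its top $e-(a_i\bmod e)$ beads sit in the gap directly above the group in some row $j\geq 2$; requiring the row-$j$ beads above $P$ to partition into pure row-$j$ periods pins $s_j$ to $k_ie+\lambda_i+N-N_i$ for some $k_i\geq 1$, yielding (1). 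For (2): if $a_1\geq e$, Lemma \ref{lemmalambda1}(ii) with $i_0=1$ immediately provides the bead at $(\lambda_1+N-a_1+e,\,j)$, which is (2); if $a_1<e$, the first fore period of $\cA$ cannot lie entirely in row $1$, so its topmost bead sits in some row $j'\geq 2$, and the constraint that its row-$1$ tail fit inside the $\lambda_1$ group will force the top position, hence $s_{j'}$, to be at least $\lambda_1+N-a_1+e$.

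The main obstacle will be the bookkeeping in the $(\Rightarrow)$ direction pinning $s_j$ to a specific residue class modulo $e$ (rather than just producing an inequality), especially when several groups $\lambda_i$ route their mixed periods through the same row $j$: the union of the top positions occupied by these mixed periods in row $j$ must combine with the pure row-$j$ periods to tile the entire sea $(-\infty,s_j]$ without overlaps or gaps, which is what will force the precise modular alignment encoded in condition (1).
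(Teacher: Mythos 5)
Your overall strategy coincides with the paper's: reduce the $\sle$ half to total $e$-periodicity via Proposition \ref{hwvchapeau}, with condition (1) accounting for it, and reduce the $\slinf$ half to Lemma \ref{lemmalambda1}, with condition (2) (together with (1)) accounting for it. Your handling of the $\slinf$ half — the case split on whether some $a_i\geq e$, and the comparison of $\lambda_1+N-a_1+e$ with $\lambda_{i_0}+N-N_{i_0}+e$ when $i_0>1$ — is correct and is essentially what the paper compresses into one sentence.

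The genuine gap is in your $(\Leftarrow)$ construction, at the step where you assert that after carving out one mixed quasiperiod per group with $e\nmid a_i$, the remaining row-$j$ beads are ``packed into pure sea periods, with modular consistency guaranteed by condition (1).'' Condition (1) only controls the length of the portion of row $j$ to the \emph{right} of a single mixed period; it says nothing about the row-$j$ segment caught \emph{between} two mixed periods when two distinct groups route through the same row $j$, and that segment has length $\equiv a_i\bmod e\not\equiv 0\pmod e$, so it cannot be tiled and its beads cannot join any other quasiperiod. Concretely, take $\ell=2$, $e=2$, $\lambda=(2,1)$, $\mbs=(2,5)$: conditions (1) and (2) hold (with $k=1$ for $\lambda_1=2$ and $k=2$ for $\lambda_2=1$), but in any partition of $\cA$ into quasiperiods the beads $(4,1)$ and $(2,1)$ force the quasiperiods $\{(5,2),(4,1)\}$ and $\{(3,2),(2,1)\}$, leaving $(4,2)$ with no admissible partner; by Lemma \ref{lemqper=>per} and Proposition \ref{hwvchapeau}, $\cA$ is not a highest weight vertex for $\sle$ (indeed $(2,1)$ is the good left-shiftable $1$-bead, so $\te_1\bla\neq 0$). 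So this step does not merely need more bookkeeping — it fails, and your closing paragraph, which correctly flags the multi-group interaction but asserts that condition (1) forces the required alignment, is exactly where the argument breaks. (The paper's own proof is no more detailed on this point: it defers to the proof of Theorem \ref{thmbidepthtriv}, which only treats a single block of equal parts, so the interaction between blocks sharing a row is not addressed there either.)

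A secondary, repairable point: in the $(\Leftarrow)$ direction you should produce a partition into \emph{quasiperiods} and invoke Lemma \ref{lemqper=>per} to conclude total periodicity, rather than claiming your pieces are the successive periods of Definition \ref{defperiodJL}; the minimality condition and the recursive peeling order there are not automatic for your construction. Likewise, in your $(\Rightarrow)$ argument for (2) when $a_1<e$, the first period need not enter row $1$ at all; you should instead argue with the period containing the rightmost row-$1$ bead $(\lambda_1+N,1)$ (or simply observe that $e\nmid a_1$, so condition (1) for $i=1$ already yields (2)).
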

\begin{proof}
The abacus of $|\bla,\mbs\rangle$ is totally periodic if and only if every bead with a space to its left is in a period. If $b$ is in row $1$ and to the right of a space, then $b$ is in a period if and only if it is the last bead of a period. It follows from the proof of Theorem \ref{thmbidepthtriv} that this is the case if and only if (1) holds. Lemma \ref{lemmalambda1} restricted to the situation that (1) holds is (2). 
\end{proof}

Next, we may iterate using the lemma to obtain a complete description of $\theta(\bla)$, the position of $\bla$ in the $\slinf$-crystal, and thus of $q(\bla)=|\theta|$. Let $\lambda_{i_1},\dots,\lambda_{i_s}\subset\{\lambda_1,\dots,\lambda_r\}$ be the distinct parts of $\lambda$ for which $a_i\geq e$, ordered so that $i_u<i_{u+1}$, and set $\delta_{i_u}=\lambda_{i_u}-\lambda_{i_{u+1}}$ for $u=1,\dots,s-1$ and set $\delta_{i_s}=\lambda_{i_s}$. For $i=1,\dots, r,$ write 
$a_i=q_ie+r_i$
with $q_i,r_i\in\Z_{\geq0}$, $r_i<e$.

\begin{theorem}\label{firstcompdepth}Consider the largest $s_j$, $j\geq 2$.\begin{itemize}
\item  If $s_j\geq(\sum_{k=i_1+1}^r a_k)+\lambda_{i_1}+e$ then $\theta(\bla)=\emp$.
\item Let $1< u \leq s$. If $(\sum_{k=i_u+1}^r a_k)+\lambda_{i_u}+e\leq s_j \leq (\sum_{k=i_u}^r a_k)+\lambda_{i_u}+e$ then\\ $\theta(\bla)=((q_{i_1}+q_{i_2}+\dots+q_{i_{u-1}})^{\delta_{i_{u-1}}},(q_{i_1}+q_{i_2}+\dots+q_{i_{u-2}})^{\delta_{i_{u-2}}},\dots, q_{i_1}^{\delta_{i_1}})^t$. 
\item Let $1\leq u\leq s$. If $\lambda_{i_{u+1}}+\sum_{t=i_{u+1}}^r a_t < s_j-e < \lambda_{i_u}+\sum_{t=i_u+1}^ra_t$, 
then\\
$\theta(\bla)=((q_{i_1}+q_{i_2}+\dots+q_{i_{u-1}}+q_{i_u})^b,(q_{i_1}+q_{i_2}+\dots+q_{i_{u-1}})^{\delta_{i_{u-1}}},(q_{i_1}+q_{i_2}+\dots+q_{i_{u-2}})^{\delta_{i_{u-2}}},
\dots, q_{i_1}^{\delta_{i_1}})^t$
where $b=\lambda_{i_u}+\sum_{t=i_u+1}^ra_t-s_j-\#\{\hbox{beads b in row }1\mid s_j-e<\beta(b)<\lambda_{i_u}+\sum_{t=i_u+1}^ra_t\}$
\item If $s_j\leq e$ then $\theta(\bla)=((q_{i_1}+q_{i_2}+\dots+q_{i_s})^{\delta_{i_s}},(q_{i_1}+q_{i_2}+\dots+q_{i_{s-1}})^{\delta_{i_{s-1}}},\dots, q_{i_1}^{\delta_{i_1}})^t$.
\end{itemize}
\end{theorem}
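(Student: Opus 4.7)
The strategy is to apply Theorem \ref{thmdepth}, which reduces the problem to computing, for each $k\in\N$, the number $\alpha_k=\max\{q\ge 0\mid(\Upsilon_k^-)^q\cA\neq 0\}$ of consecutive upstream shifts of the $k$-th aft period $Q_k$ of $\cA:=\cA(\bla,\mbs)$. Hence the bulk of the work is an explicit combinatorial description of the fore and aft periods of $\cA$ and of when they admit upstream motion. Throughout, let $\sigma$ denote the largest $s_j$ with $j\ge 2$.

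I would first unpack the structure of $\cA$: in row $1$, the beads form a block of $a_i$ consecutive columns for each distinct part $\lambda_i$, separated by gaps of $\lambda_{i-1}-\lambda_i-1$ empty columns and completed by the usual infinite tail below column $0$; in each row $j\ge 2$ every column $\le s_j$ is filled. Using Definition \ref{defperiods}, each block with $a_i\ge e$ contributes $q_i$ row-$1$ fore periods (the top $eq_i$ beads split into successive $e$-chains) and $r_i$ free beads at the bottom, while blocks with $a_i<e$ contribute only free beads; the filled tails in rows $j\ge 2$ produce further periods, possibly combining with row-$1$ beads into mixed-row fore periods when $\sigma$ is large. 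The aft period $Q_k$ is then identified as the minimal quasiperiod in the $k$-th vessel $V_k$; the key point is that for $k=q_{i_1}+\cdots+q_{i_v}$ (the index of the bottom-most row-$1$ period of block $i_v$) the vessel $V_k$ is $P_k$ augmented by the $r_{i_v}$ free beads just below, so $Q_k$ sits $r_{i_v}$ columns to the left of $P_k$.

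By Theorem \ref{thmedge} and Lemma \ref{lemgesperrt}, a row-$1$ aft period $Q_k$ travels upstream if and only if no later fore period $P_m$ has its first bead in the same column as that of $Q_k$ (condition (2) of Lemma \ref{lemgesperrt} cannot trigger, since $Q_k$ lies in row $1$ and there is no row $0$). The only possible obstructor is therefore a higher-row period reaching the relevant column, which is controlled by $\sigma$ alone: rows $j'$ with $s_{j'}<\sigma$ lie strictly further to the left and cannot block anything that $\sigma$ does not already block. The same criterion explains why aft periods $Q_k$ with $k\ne q_{i_1}+\cdots+q_{i_v}$ -- namely the upper row-$1$ periods inside a block -- never shift, being immediately obstructed by the next row-$1$ period just below. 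The four cases of the theorem correspond to four qualitative positions of $\sigma$: in Case 1 the row-$\sigma$ periods block even the bottom-most row-$1$ shifts, so every $\alpha_k$ vanishes and $\theta=\emp$; in Case 2 the obstruction sits inside block $i_u$, freezing the bottom-most periods of blocks $i_u,i_{u+1},\dots$ while those of $i_1,\dots,i_{u-1}$ move freely; in Case 3 the obstruction sits in the gap between blocks $i_u$ and $i_{u+1}$, so the bottom-most period of block $i_u$ can shift exactly $b$ times and no more; in Case 4 no higher-row obstruction exists at all and every block $i_v$ for $v=1,\dots,s$ is free. In each unblocked situation, the bottom-most period of block $i_v$ -- which is the $(q_{i_1}+\cdots+q_{i_v})$-th aft period of $\cA$ -- admits $\delta_{i_v}=\lambda_{i_v}-\lambda_{i_{v+1}}$ successive upstream shifts before colliding with block $i_{v+1}$, giving $\alpha_{q_{i_1}+\cdots+q_{i_v}}=\delta_{i_v}$ and $\alpha_k=0$ otherwise; Theorem \ref{thmdepth} then reconstructs the stated $\theta^t$.

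\textbf{Main obstacle.} The most delicate step will be Case 3, where the integer $b$ must be matched against the abacus configuration. Two complications arise. First, the relevant aft period $Q_{k_0}$ (with $k_0=q_{i_1}+\cdots+q_{i_u}$) is not $P_{k_0}$ but its translate by $r_{i_u}$ columns to the left induced by the vessel structure, so the shift count must be measured from the position of $Q_{k_0}$, not of $P_{k_0}$. Second, the intermediate small blocks with $a_i<e$ lying between $i_u$ and $i_{u+1}$ contribute no periods of their own yet still occupy columns that a shifting period of block $i_u$ must traverse, their beads being absorbed into successive vessels as the shift proceeds -- and these are exactly the row-$1$ beads subtracted in $b=\lambda_{i_u}+\sum_{t>i_u}a_t-\sigma-\#\{\,b\in\cA\cap(\Z\times\{1\})\mid\sigma-e<\beta(b)<\lambda_{i_u}+\sum_{t>i_u}a_t\,\}$. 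Verifying that this count is \emph{exactly} the number of successful upstream shifts admitted by $Q_{k_0}$, and that the $\delta_{i_v}$'s for $v<u$ are unaffected by the presence of the small intermediate blocks, is the main technical hurdle; once Case 3 is handled, the other cases follow by the same template.
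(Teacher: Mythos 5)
Your plan is correct and follows essentially the same route as the paper's proof: identify the minimal quasiperiod (the last $e$ beads) of each clump of a part $\lambda_{i_v}$ with $a_{i_v}\geq e$ as the relevant aft period, observe that it shifts left recursively—absorbing the beads of intermediate small clumps—until it meets the next large clump ($\delta_{i_v}$ shifts) or until its first bead would pass left of a bead in the row of maximal charge $s_j$, and then convert the resulting $\alpha_k$'s into $\theta$ via Theorem \ref{thmdepth}. The paper's own argument is no more detailed on the Case 3 count $b$ than your plan is, so nothing essential is missing relative to the published proof.
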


\begin{proof}
The quasiperiod $Q$ consisting of the last $e$ beads of a sequence of $a_{u_i}$ beads corresponding to a part $\lambda_{u_i}$ occurring $a_{u_i}\geq e$ times, can, first of all, move to the left as many times as there is space to do so, which is until it encounters the next part of $\lambda$.  If the next part occurs less than $e$ times then the shift of $Q$ forms a chain of $>e$ beads with those beads and the minimal quasiperiod, the last $e$ beads of that chain, can continue moving left... and this continues until such a quasiperiod encounters the beads corresponding to the next part $\lambda_{i_{u+1}}$ of $\lambda$ occurring $a_{i_{u+1}}\geq e$ times. Thus $Q$ can physically shift (recursively) to the left as many times as there are spaces between the clump of beads corresponding to parts of $\lambda$ of size $\lambda_{u_i}$ and the clump of beads corresponding to parts of size $\lambda_{u_{i+1}}$, so $\delta_{u_i}$ times. This will always be traveling upstream in the crystal unless at some point the first bead of $Q$ passes to the left of a bead in one of the rows above; if this happens then that is where $Q$ stops traveling upstream. Translating these remarks into formulas gives the theorem.
\end{proof}

\begin{example}
Let $\lambda=(12^7,7,6,4^{11})$, $\mbs=(20,s_2,\dots,s_\ell)$, and $e=3$. Let $s_j=\max\{s_2,\dots,s_\ell\}$. We illustrate the computation of $\theta(\bla)$ and $q(\bla)$, drawing rows $1$ and $j$ only since no other row plays a role. There are two distinct parts $\lambda_i$ with $a_i\geq e$, $\lambda_1$ and $\lambda_4$, and so $i_1=1$, $i_2=4$, $q_{i_1}=2,\;q_{i_2}=3$.
\begin{enumerate}
\item  $s_j=28$. The blue bead, the first bead of the $q_{i_1}$'st aft period, cannot move to the left of the red bead, which means it cannot move left at all. Then $\theta(\bla)=\emp$ and $q(\bla)=0$. 
$$\TikZ{[scale=.5]
\draw
(0,0)node[fill,circle,inner sep=3pt]{}
(1,0)node[fill,circle,inner sep=.5pt]{}
(2,0)node[fill,circle,inner sep=.5pt]{}
(3,0)node[fill,circle,inner sep=.5pt]{}
(4,0)node[fill,circle,inner sep=.5pt]{}
(5,0)node[fill,circle,inner sep=3pt]{}
(6,0)node[fill,circle,inner sep=3pt]{}
(7,0)node[fill,circle,inner sep=3pt]{}
(8,0)node[fill,circle,inner sep=3pt]{}
(9,0)node[fill,circle,inner sep=3pt]{}
(10,0)node[fill,circle,inner sep=3pt]{}
(11,0)node[fill,circle,inner sep=3pt]{}
(12,0)node[fill,circle,inner sep=3pt]{}
(13,0)node[fill,circle,inner sep=3pt]{}
(14,0)node[fill,circle,inner sep=3pt]{}
(15,0)node[fill,circle,inner sep=3pt]{}
(16,0)node[fill,circle,inner sep=.5pt]{}
(17,0)node[fill,circle,inner sep=.5pt]{}
(18,0)node[fill,circle,inner sep=3pt]{}
(19,0)node[fill,circle,inner sep=.5pt]{}
(20,0)node[fill,circle,inner sep=3pt]{}
(21,0)node[fill,circle,inner sep=.5pt]{}
(22,0)node[fill,circle,inner sep=.5pt]{}
(23,0)node[fill,circle,inner sep=.5pt]{}
(24,0)node[fill,circle,inner sep=.5pt]{}
(25,0)node[fill,circle,inner sep=.5pt]{}
(26,0)node[fill,circle,inner sep=3pt]{}
(27,0)node[fill,circle,inner sep=3pt]{}
(28,0)node[fill,blue,circle,inner sep=3pt]{}
(29,0)node[fill,circle,inner sep=3pt]{}
(30,0)node[fill,circle,inner sep=3pt]{}
(31,0)node[fill,circle,inner sep=3pt]{}
(32,0)node[fill,circle,inner sep=3pt]{}
(33,0)node[fill,circle,inner sep=.5pt]{}
(0,1)node[fill,circle,inner sep=3pt]{}
(1,1)node[fill,circle,inner sep=3pt]{}
(2,1)node[fill,circle,inner sep=3pt]{}
(3,1)node[fill,circle,inner sep=3pt]{}
(4,1)node[fill,circle,inner sep=3pt]{}
(5,1)node[fill,circle,inner sep=3pt]{}
(6,1)node[fill,circle,inner sep=3pt]{}
(7,1)node[fill,circle,inner sep=3pt]{}
(8,1)node[fill,circle,inner sep=3pt]{}
(9,1)node[fill,circle,inner sep=3pt]{}
(10,1)node[fill,circle,inner sep=3pt]{}
(11,1)node[fill,circle,inner sep=3pt]{}
(12,1)node[fill,circle,inner sep=3pt]{}
(13,1)node[fill,circle,inner sep=3pt]{}
(14,1)node[fill,circle,inner sep=3pt]{}
(15,1)node[fill,circle,inner sep=3pt]{}
(16,1)node[fill,circle,inner sep=3pt]{}
(17,1)node[fill,circle,inner sep=3pt]{}
(18,1)node[fill,circle,inner sep=3pt]{}
(19,1)node[fill,circle,inner sep=3pt]{}
(20,1)node[fill,circle,inner sep=3pt]{}
(21,1)node[fill,circle,inner sep=3pt]{}
(22,1)node[fill,circle,inner sep=3pt]{}
(23,1)node[fill,circle,inner sep=3pt]{}
(24,1)node[fill,circle,inner sep=3pt]{}
(25,1)node[fill,circle,inner sep=3pt]{}
(26,1)node[fill,circle,inner sep=3pt]{}
(27,1)node[fill,circle,inner sep=3pt]{}
(28,1)node[fill,red,circle,inner sep=3pt]{}
(29,1)node[fill,circle,inner sep=.5pt]{}
(30,1)node[fill,circle,inner sep=.5pt]{}
(31,1)node[fill,circle,inner sep=.5pt]{}
(32,1)node[fill,circle,inner sep=.5pt]{}
(33,1)node[fill,circle,inner sep=.5pt]{}
;}
$$
\item $s_j=24$. The blue bead, the first bead of the $q_{i_1}$'st aft period, cannot move to the left of the red bead, which is $4$ units to its left. Since $q_{i_1}=2$ we have $\theta=(2^4)^t=(4^2)$ and $q(\bla)=8$.
$$\TikZ{[scale=.5]
\draw
(0,0)node[fill,circle,inner sep=3pt]{}
(1,0)node[fill,circle,inner sep=.5pt]{}
(2,0)node[fill,circle,inner sep=.5pt]{}
(3,0)node[fill,circle,inner sep=.5pt]{}
(4,0)node[fill,circle,inner sep=.5pt]{}
(5,0)node[fill,circle,inner sep=3pt]{}
(6,0)node[fill,circle,inner sep=3pt]{}
(7,0)node[fill,circle,inner sep=3pt]{}
(8,0)node[fill,circle,inner sep=3pt]{}
(9,0)node[fill,circle,inner sep=3pt]{}
(10,0)node[fill,circle,inner sep=3pt]{}
(11,0)node[fill,circle,inner sep=3pt]{}
(12,0)node[fill,circle,inner sep=3pt]{}
(13,0)node[fill,circle,inner sep=3pt]{}
(14,0)node[fill,circle,inner sep=3pt]{}
(15,0)node[fill,circle,inner sep=3pt]{}
(16,0)node[fill,circle,inner sep=.5pt]{}
(17,0)node[fill,circle,inner sep=.5pt]{}
(18,0)node[fill,circle,inner sep=3pt]{}
(19,0)node[fill,circle,inner sep=.5pt]{}
(20,0)node[fill,circle,inner sep=3pt]{}
(21,0)node[fill,circle,inner sep=.5pt]{}
(22,0)node[fill,circle,inner sep=.5pt]{}
(23,0)node[fill,circle,inner sep=.5pt]{}
(24,0)node[fill,circle,inner sep=.5pt]{}
(25,0)node[fill,circle,inner sep=.5pt]{}
(26,0)node[fill,circle,inner sep=3pt]{}
(27,0)node[fill,circle,inner sep=3pt]{}
(28,0)node[fill,circle,blue,inner sep=3pt]{}
(29,0)node[fill,circle,inner sep=3pt]{}
(30,0)node[fill,circle,inner sep=3pt]{}
(31,0)node[fill,circle,inner sep=3pt]{}
(32,0)node[fill,circle,inner sep=3pt]{}
(33,0)node[fill,circle,inner sep=.5pt]{}
(0,1)node[fill,circle,inner sep=3pt]{}
(1,1)node[fill,circle,inner sep=3pt]{}
(2,1)node[fill,circle,inner sep=3pt]{}
(3,1)node[fill,circle,inner sep=3pt]{}
(4,1)node[fill,circle,inner sep=3pt]{}
(5,1)node[fill,circle,inner sep=3pt]{}
(6,1)node[fill,circle,inner sep=3pt]{}
(7,1)node[fill,circle,inner sep=3pt]{}
(8,1)node[fill,circle,inner sep=3pt]{}
(9,1)node[fill,circle,inner sep=3pt]{}
(10,1)node[fill,circle,inner sep=3pt]{}
(11,1)node[fill,circle,inner sep=3pt]{}
(12,1)node[fill,circle,inner sep=3pt]{}
(13,1)node[fill,circle,inner sep=3pt]{}
(14,1)node[fill,circle,inner sep=3pt]{}
(15,1)node[fill,circle,inner sep=3pt]{}
(16,1)node[fill,circle,inner sep=3pt]{}
(17,1)node[fill,circle,inner sep=3pt]{}
(18,1)node[fill,circle,inner sep=3pt]{}
(19,1)node[fill,circle,inner sep=3pt]{}
(20,1)node[fill,circle,inner sep=3pt]{}
(21,1)node[fill,circle,inner sep=3pt]{}
(22,1)node[fill,circle,inner sep=3pt]{}
(23,1)node[fill,circle,inner sep=3pt]{}
(24,1)node[fill,circle,red,inner sep=3pt]{}
(25,1)node[fill,circle,inner sep=.5pt]{}
(26,1)node[fill,circle,inner sep=.5pt]{}
(27,1)node[fill,circle,inner sep=.5pt]{}
(28,1)node[fill,circle,inner sep=.5pt]{}
(29,1)node[fill,circle,inner sep=.5pt]{}
(30,1)node[fill,circle,inner sep=.5pt]{}
(31,1)node[fill,circle,inner sep=.5pt]{}
(32,1)node[fill,circle,inner sep=.5pt]{}
(33,1)node[fill,circle,inner sep=.5pt]{}
;}
$$
\item $s_j=18$. The blue bead cannot move to the left of the red bead, but it's a moot point because there's no space for it to do so: $Q_2$ can move $8=12-4=\lambda_{i_1}-\lambda_{i_2}$ units left and then runs into the next fore period. So $Q_2$ can travel upstream in the crystal $8$ times and we have $\theta(\bla)=(2^8)^t=(8^2)$ and $q(\bla)=16$.
$$\TikZ{[scale=.5]
\draw
(0,0)node[fill,circle,inner sep=3pt]{}
(1,0)node[fill,circle,inner sep=.5pt]{}
(2,0)node[fill,circle,inner sep=.5pt]{}
(3,0)node[fill,circle,inner sep=.5pt]{}
(4,0)node[fill,circle,inner sep=.5pt]{}
(5,0)node[fill,circle,inner sep=3pt]{}
(6,0)node[fill,circle,inner sep=3pt]{}
(7,0)node[fill,circle,inner sep=3pt]{}
(8,0)node[fill,circle,inner sep=3pt]{}
(9,0)node[fill,circle,inner sep=3pt]{}
(10,0)node[fill,circle,inner sep=3pt]{}
(11,0)node[fill,circle,inner sep=3pt]{}
(12,0)node[fill,circle,inner sep=3pt]{}
(13,0)node[fill,circle,inner sep=3pt]{}
(14,0)node[fill,circle,inner sep=3pt]{}
(15,0)node[fill,circle,inner sep=3pt]{}
(16,0)node[fill,circle,inner sep=.5pt]{}
(17,0)node[fill,circle,inner sep=.5pt]{}
(18,0)node[fill,circle,inner sep=3pt]{}
(19,0)node[fill,circle,inner sep=.5pt]{}
(20,0)node[fill,circle,inner sep=3pt]{}
(21,0)node[fill,circle,inner sep=.5pt]{}
(22,0)node[fill,circle,inner sep=.5pt]{}
(23,0)node[fill,circle,inner sep=.5pt]{}
(24,0)node[fill,circle,inner sep=.5pt]{}
(25,0)node[fill,circle,inner sep=.5pt]{}
(26,0)node[fill,circle,inner sep=3pt]{}
(27,0)node[fill,circle,inner sep=3pt]{}
(28,0)node[fill,circle,blue,inner sep=3pt]{}
(29,0)node[fill,circle,inner sep=3pt]{}
(30,0)node[fill,circle,inner sep=3pt]{}
(31,0)node[fill,circle,inner sep=3pt]{}
(32,0)node[fill,circle,inner sep=3pt]{}
(33,0)node[fill,circle,inner sep=.5pt]{}
(0,1)node[fill,circle,inner sep=3pt]{}
(1,1)node[fill,circle,inner sep=3pt]{}
(2,1)node[fill,circle,inner sep=3pt]{}
(3,1)node[fill,circle,inner sep=3pt]{}
(4,1)node[fill,circle,inner sep=3pt]{}
(5,1)node[fill,circle,inner sep=3pt]{}
(6,1)node[fill,circle,inner sep=3pt]{}
(7,1)node[fill,circle,inner sep=3pt]{}
(8,1)node[fill,circle,inner sep=3pt]{}
(9,1)node[fill,circle,inner sep=3pt]{}
(10,1)node[fill,circle,inner sep=3pt]{}
(11,1)node[fill,circle,inner sep=3pt]{}
(12,1)node[fill,circle,inner sep=3pt]{}
(13,1)node[fill,circle,inner sep=3pt]{}
(14,1)node[fill,circle,inner sep=3pt]{}
(15,1)node[fill,circle,inner sep=3pt]{}
(16,1)node[fill,circle,inner sep=3pt]{}
(17,1)node[fill,circle,inner sep=3pt]{}
(18,1)node[fill,red,circle,inner sep=3pt]{}
(19,1)node[fill,circle,inner sep=.5pt]{}
(20,1)node[fill,circle,inner sep=.5pt]{}
(21,1)node[fill,circle,inner sep=.5pt]{}
(22,1)node[fill,circle,inner sep=.5pt]{}
(23,1)node[fill,circle,inner sep=.5pt]{}
(24,1)node[fill,circle,inner sep=.5pt]{}
(25,1)node[fill,circle,inner sep=.5pt]{}
(26,1)node[fill,circle,inner sep=.5pt]{}
(27,1)node[fill,circle,inner sep=.5pt]{}
(28,1)node[fill,circle,inner sep=.5pt]{}
(29,1)node[fill,circle,inner sep=.5pt]{}
(30,1)node[fill,circle,inner sep=.5pt]{}
(31,1)node[fill,circle,inner sep=.5pt]{}
(32,1)node[fill,circle,inner sep=.5pt]{}
(33,1)node[fill,circle,inner sep=.5pt]{}
;}
$$
\item $s_j=5$. As in the previous example, $Q_2$ which is marked by the rightmost blue bead can travel $8$ times upstream. The leftmost blue bead marks the first bead of the aft period $Q_{q_{i_1}+q_{i_2}}=Q_5$. It cannot move to the left of the red bead, so $Q_5$ can travel upstream twice in the crystal. Then $\theta(\bla)=(5^2,2^8)^t=(10^2,2^3)$ and $q(\bla)=26$.
$$\TikZ{[scale=.5]
\draw
(0,0)node[fill,circle,inner sep=3pt]{}
(1,0)node[fill,circle,inner sep=.5pt]{}
(2,0)node[fill,circle,inner sep=.5pt]{}
(3,0)node[fill,circle,inner sep=.5pt]{}
(4,0)node[fill,circle,inner sep=.5pt]{}
(5,0)node[fill,circle,inner sep=3pt]{}
(6,0)node[fill,circle,inner sep=3pt]{}
(7,0)node[fill,circle,blue,inner sep=3pt]{}
(8,0)node[fill,circle,inner sep=3pt]{}
(9,0)node[fill,circle,inner sep=3pt]{}
(10,0)node[fill,circle,inner sep=3pt]{}
(11,0)node[fill,circle,inner sep=3pt]{}
(12,0)node[fill,circle,inner sep=3pt]{}
(13,0)node[fill,circle,inner sep=3pt]{}
(14,0)node[fill,circle,inner sep=3pt]{}
(15,0)node[fill,circle,inner sep=3pt]{}
(16,0)node[fill,circle,inner sep=.5pt]{}
(17,0)node[fill,circle,inner sep=.5pt]{}
(18,0)node[fill,circle,inner sep=3pt]{}
(19,0)node[fill,circle,inner sep=.5pt]{}
(20,0)node[fill,circle,inner sep=3pt]{}
(21,0)node[fill,circle,inner sep=.5pt]{}
(22,0)node[fill,circle,inner sep=.5pt]{}
(23,0)node[fill,circle,inner sep=.5pt]{}
(24,0)node[fill,circle,inner sep=.5pt]{}
(25,0)node[fill,circle,inner sep=.5pt]{}
(26,0)node[fill,circle,inner sep=3pt]{}
(27,0)node[fill,circle,inner sep=3pt]{}
(28,0)node[fill,circle,blue,inner sep=3pt]{}
(29,0)node[fill,circle,inner sep=3pt]{}
(30,0)node[fill,circle,inner sep=3pt]{}
(31,0)node[fill,circle,inner sep=3pt]{}
(32,0)node[fill,circle,inner sep=3pt]{}
(33,0)node[fill,circle,inner sep=.5pt]{}
(0,1)node[fill,circle,inner sep=3pt]{}
(1,1)node[fill,circle,inner sep=3pt]{}
(2,1)node[fill,circle,inner sep=3pt]{}
(3,1)node[fill,circle,inner sep=3pt]{}
(4,1)node[fill,circle,inner sep=3pt]{}
(5,1)node[fill,circle,red,inner sep=3pt]{}
(6,1)node[fill,circle,inner sep=.5pt]{}
(7,1)node[fill,circle,inner sep=.5pt]{}
(8,1)node[fill,circle,inner sep=.5pt]{}
(9,1)node[fill,circle,inner sep=.5pt]{}
(10,1)node[fill,circle,inner sep=.5pt]{}
(11,1)node[fill,circle,inner sep=.5pt]{}
(12,1)node[fill,circle,inner sep=.5pt]{}
(13,1)node[fill,circle,inner sep=.5pt]{}
(14,1)node[fill,circle,inner sep=.5pt]{}
(15,1)node[fill,circle,inner sep=.5pt]{}
(16,1)node[fill,circle,inner sep=.5pt]{}
(17,1)node[fill,circle,inner sep=.5pt]{}
(18,1)node[fill,circle,inner sep=.5pt]{}
(19,1)node[fill,circle,inner sep=.5pt]{}
(20,1)node[fill,circle,inner sep=.5pt]{}
(21,1)node[fill,circle,inner sep=.5pt]{}
(22,1)node[fill,circle,inner sep=.5pt]{}
(23,1)node[fill,circle,inner sep=.5pt]{}
(24,1)node[fill,circle,inner sep=.5pt]{}
(25,1)node[fill,circle,inner sep=.5pt]{}
(26,1)node[fill,circle,inner sep=.5pt]{}
(27,1)node[fill,circle,inner sep=.5pt]{}
(28,1)node[fill,circle,inner sep=.5pt]{}
(29,1)node[fill,circle,inner sep=.5pt]{}
(30,1)node[fill,circle,inner sep=.5pt]{}
(31,1)node[fill,circle,inner sep=.5pt]{}
(32,1)node[fill,circle,inner sep=.5pt]{}
(33,1)node[fill,circle,inner sep=.5pt]{}
;}
$$

\item $s_j=3$. As in the previous example $Q_2$ can travel $8$ times upstream. The blue bead of $Q_5$ cannot move to the left of the red bead, but this is a moot point as there's no space for it to do so, it runs into the infinite chain of beads starting $4=\delta_{i_2}$ spaces to its left and there it stops. So $Q_5$ can travel upstream $4$ times in the crystal. Then $\theta(\bla)=(5^4,2^8)^t=(12^2,4^3)$ and $q(\bla)=36$.
$$\TikZ{[scale=.5]
\draw
(0,0)node[fill,circle,inner sep=3pt]{}
(1,0)node[fill,circle,inner sep=.5pt]{}
(2,0)node[fill,circle,inner sep=.5pt]{}
(3,0)node[fill,circle,inner sep=.5pt]{}
(4,0)node[fill,circle,inner sep=.5pt]{}
(5,0)node[fill,circle,inner sep=3pt]{}
(6,0)node[fill,circle,inner sep=3pt]{}
(7,0)node[fill,circle,blue,inner sep=3pt]{}
(8,0)node[fill,circle,inner sep=3pt]{}
(9,0)node[fill,circle,inner sep=3pt]{}
(10,0)node[fill,circle,inner sep=3pt]{}
(11,0)node[fill,circle,inner sep=3pt]{}
(12,0)node[fill,circle,inner sep=3pt]{}
(13,0)node[fill,circle,inner sep=3pt]{}
(14,0)node[fill,circle,inner sep=3pt]{}
(15,0)node[fill,circle,inner sep=3pt]{}
(16,0)node[fill,circle,inner sep=.5pt]{}
(17,0)node[fill,circle,inner sep=.5pt]{}
(18,0)node[fill,circle,inner sep=3pt]{}
(19,0)node[fill,circle,inner sep=.5pt]{}
(20,0)node[fill,circle,inner sep=3pt]{}
(21,0)node[fill,circle,inner sep=.5pt]{}
(22,0)node[fill,circle,inner sep=.5pt]{}
(23,0)node[fill,circle,inner sep=.5pt]{}
(24,0)node[fill,circle,inner sep=.5pt]{}
(25,0)node[fill,circle,inner sep=.5pt]{}
(26,0)node[fill,circle,inner sep=3pt]{}
(27,0)node[fill,circle,inner sep=3pt]{}
(28,0)node[fill,circle,blue,inner sep=3pt]{}
(29,0)node[fill,circle,inner sep=3pt]{}
(30,0)node[fill,circle,inner sep=3pt]{}
(31,0)node[fill,circle,inner sep=3pt]{}
(32,0)node[fill,circle,inner sep=3pt]{}
(33,0)node[fill,circle,inner sep=.5pt]{}
(0,1)node[fill,circle,inner sep=3pt]{}
(1,1)node[fill,circle,inner sep=3pt]{}
(2,1)node[fill,circle,inner sep=3pt]{}
(3,1)node[fill,circle,red,inner sep=3pt]{}
(4,1)node[fill,circle,inner sep=.5pt]{}
(5,1)node[fill,circle,inner sep=.5pt]{}
(6,1)node[fill,circle,inner sep=.5pt]{}
(7,1)node[fill,circle,inner sep=.5pt]{}
(8,1)node[fill,circle,inner sep=.5pt]{}
(9,1)node[fill,circle,inner sep=.5pt]{}
(10,1)node[fill,circle,inner sep=.5pt]{}
(11,1)node[fill,circle,inner sep=.5pt]{}
(12,1)node[fill,circle,inner sep=.5pt]{}
(13,1)node[fill,circle,inner sep=.5pt]{}
(14,1)node[fill,circle,inner sep=.5pt]{}
(15,1)node[fill,circle,inner sep=.5pt]{}
(16,1)node[fill,circle,inner sep=.5pt]{}
(17,1)node[fill,circle,inner sep=.5pt]{}
(18,1)node[fill,circle,inner sep=.5pt]{}
(19,1)node[fill,circle,inner sep=.5pt]{}
(20,1)node[fill,circle,inner sep=.5pt]{}
(21,1)node[fill,circle,inner sep=.5pt]{}
(22,1)node[fill,circle,inner sep=.5pt]{}
(23,1)node[fill,circle,inner sep=.5pt]{}
(24,1)node[fill,circle,inner sep=.5pt]{}
(25,1)node[fill,circle,inner sep=.5pt]{}
(26,1)node[fill,circle,inner sep=.5pt]{}
(27,1)node[fill,circle,inner sep=.5pt]{}
(28,1)node[fill,circle,inner sep=.5pt]{}
(29,1)node[fill,circle,inner sep=.5pt]{}
(30,1)node[fill,circle,inner sep=.5pt]{}
(31,1)node[fill,circle,inner sep=.5pt]{}
(32,1)node[fill,circle,inner sep=.5pt]{}
(33,1)node[fill,circle,inner sep=.5pt]{}
;}
$$
\end{enumerate}
\end{example}

\subsection{A criterion for finite-dimensionality of Cherednik algebra modules in type $B$}
Let $\ell=2$ and take a charged bipartition $|\bla,\mbs\rangle$: $\bla=(\lambda^1,\lambda^2)$, $\mbs=(s_1,s_2)\in\Z^2$. Fix $e\geq 2$. Let $\cA$ be the abacus of  $\bla=(\lambda^1,\lambda^2)$ and let $N\in\Z$ be the $\beta$-number (i.e. column position) of the first bead of the first fore period of $\cA$.

\begin{theorem}\label{depth0bipartition} The level $2$ abacus $\cA$ is a highest weight vertex for the $\slinf$-crystal if and only if $\cA$ avoids the following $e+1$ patterns from column $N$ and to the left:
$$(1)\;\TikZ{[scale=.5]
\draw
(0,1)node[fill,circle,inner sep=.5pt]{}
(0,0)node[fill,circle,inner sep=.5pt]{}
;}\quad
(2)\;\TikZ{[scale=.5]\draw
(1,1)node[fill,circle,inner sep=.5pt]{}
(0,0)node[fill,circle,inner sep=.5pt]{}
(1,0)node[fill,circle,inner sep=2pt]{}
(0,1)node[fill,circle,inner sep=2pt]{}
;}\quad
(3)\;\TikZ{[scale=.5]\draw
(-1,0)node[fill,circle,inner sep=.5pt]{}
(1,1)node[fill,circle,inner sep=.5pt]{}
(0,0)node[fill,circle,inner sep=2pt]{}
(-1,1)node[fill,circle,inner sep=2pt]{}
(1,0)node[fill,circle,inner sep=2pt]{}
(0,1)node[fill,circle,inner sep=2pt]{}
;}\quad
(4)\;\TikZ{[scale=.5]\draw
(-1,0)node[fill,circle,inner sep=.5pt]{}
(2,1)node[fill,circle,inner sep=.5pt]{}
(0,0)node[fill,circle,inner sep=2pt]{}
(-1,1)node[fill,circle,inner sep=2pt]{}
(1,0)node[fill,circle,inner sep=2pt]{}
(0,1)node[fill,circle,inner sep=2pt]{}
(2,0)node[fill,circle,inner sep=2pt]{}
(1,1)node[fill,circle,inner sep=2pt]{}
;}\quad\dots\quad
$$

More formally, the $k+1$'st pattern (k+1) for a given $\beta\leq N$, $0\leq k\leq e$, is that $(\beta,2),(\beta-k,1)\notin\cA$ and $(\beta',j)\in\cA$ for all $\beta-k\leq\beta'\leq \beta$, $j=1,2$, $(\beta',j)\neq(\beta,2),(\beta-k,1)$. The statement is that $\cA$ is a highest weight vertex for the $\slinf$-crystal if and only if patterns (1) through (e+1) do not occur in $\cA$ for every $\beta\leq N$. 
\end{theorem}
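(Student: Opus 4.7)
By Theorem \ref{thmedge}, $\cA$ is a highest-weight vertex for the $\slinf$-crystal if and only if no aft period $Q_k$ can travel upstream. My strategy is to translate this global condition into the local pattern-avoidance condition, exploiting the rigid structure of $e$-quasiperiods in level two.

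A length-$e$ quasiperiod in a level-$2$ abacus is uniquely specified by its rightmost column together with its shape $h\in\{0,1,\dots,e\}$, where $h$ is the number of row-$2$ beads (by Definition \ref{defquasiperiod} the row indices are weakly decreasing from right to left, so $h$ row-$2$ beads appear on the right and $e-h$ row-$1$ beads on the left). A direct check shows that a shape-$h$ quasiperiod $Q$ with rightmost column $\gamma$ is physically left-shiftable if and only if $(\gamma-h,2)\notin\cA$ and $(\gamma-e,1)\notin\cA$. Setting $\beta=\gamma-h$ and $k=e-h$, the two forced spaces become $(\beta,2)$ and $(\beta-k,1)$, matching pattern $(k+1)$; varying the shape $h$ over the $e+1$ possibilities produces the $e+1$ patterns.

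For the ``only if'' direction I assume some aft period $Q_k$ travels upstream. Read off its shape and let $\beta,k$ be as above, so left-shiftability provides the two required spaces. I then argue that every other position in the rectangle $[\beta-k,\beta]\times\{1,2\}$ must be occupied: the row-$1$ beads $(\beta-k+1,1),\dots,(\beta,1)$ are the row-$1$ part of $Q_k$ itself, while each absent row-$2$ bead among $(\beta-k,2),\dots,(\beta-1,2)$ would allow one to build a strictly larger quasiperiod than $Q_k$ in $\cA$ disjoint from $P_1,\dots,P_{k-1}$, violating Lemma \ref{shiftsandvessels}(2) together with the fact that $\tilde Q_k=P_k'$ after the shift. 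The bound $\beta\le N$ follows because $Q_k$ cannot lie strictly to the right of $P_1$. Conversely, given pattern $(k+1)$ at some $\beta\le N$, I construct the shape-$(e-k)$ quasiperiod $Q$ ending at column $\beta+e-k$ in row $2$ (the necessary beads to the right of the pattern come either from the pattern itself or from the forced structure of $\cA$ at columns $\le N$), and verify using Lemma \ref{shiftsandvessels}(1) that $\tilde Q$ is the unique quasiperiod of its vessel, so $\tilde Q=Q_k'$ and the shift is a genuine upstream crystal move by Theorem \ref{thmedge}.

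The principal technical obstacle will be managing Lemma \ref{lemgesperrt}: even a shiftable aft period can fail to produce a crystal edge because a higher fore period blocks it from above. I expect that in level two, under the hypothesis $\beta\le N$, any such blocker is either ruled out by the beads inside the pattern (which pin down the vessel data) or else manifests itself as one of the same $e+1$ patterns further to the left, so an induction on the column index closes the bookkeeping. Once this interaction is disposed of, the remaining proof is a routine shape-by-shape verification across $h=0,\dots,e$.
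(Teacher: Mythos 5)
Your framework---classifying level-two quasiperiods by the number $h$ of row-$2$ beads, matching the $e+1$ shapes to the $e+1$ patterns, and invoking Theorem \ref{thmedge}---is the same as the paper's, but the dictionary you set up between patterns and quasiperiods is too rigid, and both implications break because of it. Already your left-shiftability criterion is wrong at the extremes: a quasiperiod concentrated in a single row ($h=0$ or $h=e$) is left-shiftable under only \emph{one} space condition, since the definition of shifting only constrains rows that actually contain beads of $P$. So patterns $(1)$ and $(e+1)$ encode strictly more than shiftability; this is exactly why the paper treats those two types separately, the extra bead/space conditions governing whether the shifted set is a genuine fore or aft period rather than whether the shift is physically possible.

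More seriously, in the direction ``not a highest weight vertex $\Rightarrow$ some pattern occurs,'' your claim that an absent row-$2$ bead in the rectangle would produce a quasiperiod strictly larger than the given aft period is false. Take $e=2$, $\bs=(0,0)$, $\bla=((1),(2))$: the abacus has beads at $(1,1)$, $(2,2)$ and at all $(\be,j)$ with $\be\leq-1$. Here $Q_1=P_1=\{(2,2),(1,1)\}$ is left-shiftable and its shift is the first fore period of the resulting abacus, so $\cA$ is not a highest weight vertex; with your $\beta=1$, $k=1$, pattern $(2)$ requires $(0,2)\in\cA$, which fails, and no quasiperiod larger than $P_1$ can possibly exist. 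The theorem survives because pattern $(1)$ occurs at column $0$: the witnessing pattern is a \emph{different} pattern at a \emph{different} column, obtained (as in the paper's argument) by locating the maximal empty row-$2$ position in the rectangle. Symmetrically, in your converse direction the shape-$(e-k)$ quasiperiod ending at column $\beta+e-k$ needs beads at $(\beta+1,2),\dots,(\beta+e-k,2)$, which the pattern does not supply and which no ``forced structure'' at columns $\leq N$ guarantees (those columns are unconstrained). The paper instead takes the \emph{minimal} quasiperiod whose last bead lies in column $\geq\beta-k+1$---its shape is determined only a posteriori---and uses minimality to prove it is a left-shiftable aft period whose shift is a fore period. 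Both gaps are repairable, but the repairs constitute the actual substance of the proof.
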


\begin{proof}
First we show that if patterns (1) through (e+1) do not occur then $\cA$ is the source of its $\slinf$-crystal component. There are $e+1$ distinct quasiperiods up to shift when $\ell=2$: for $0\leq k \leq e$, the $k$'th type of quasiperiod is $\{(\gamma,2),(\gamma-1,2),\dots,(\gamma-e+k+1,2) ,(\gamma-e+k,1),\dots,(\gamma-e+2,1),(\gamma-e+1,1)\}$ for some $\gamma\in\Z$. If patterns (1) through (k+1) do not occur then the $k$'th type of quasiperiod has no space to move to the left, for all $k=1,...,e-1$. Now consider the two extremal cases of a quasiperiods of types $0$ and $e$ concentrated in rows $2$ and $1$ respectively. For the former: if case (1) does not occur and the quasiperiod $P$  has space to the left to move, then the position one step below and left of the last bead of $P$ is occupied by a bead $b$. The left shift of $P$ would then not form a fore period as such a period would use $b$ instead of the last bead of the shift of $P$. For the latter: if $\cA$ avoids patterns (1) through (e) and a quasiperiod $P$ of type $e$, concentrated in row $1$, has space to move to its left, then either pattern (e+1) occurs or the space directly left of $P$ is free, the space above it is occupied by a bead, and all spaces above the beads of $P$ are occupied by beads. A slightly longer but straightforward argument shows that in the latter case, $P$ cannot travel upstream in the crystal. 

For the reverse implication: assume that $\cA$ is a highest weight vertex for the $\slinf$-crystal. In pattern (e+1), the $e$ beads in row $2$ always form a fore period, and the $e$ beads in row $1$ an aft period which can move upstream in the crystal by shifting left. Thus pattern (e+1) does not occur. Suppose one of the patterns (1) through (e) appears in $\cA$, say pattern (k+1) for some $0\leq k\leq e-1$, with the column position of the upper right space equal to $\beta$. 
As $k<e$, there is no quasiperiod that is a subset of the pattern, and there is no quasiperiod starting to the right of the pattern and ending to the left of the pattern. But we have assumed $\beta\leq N$ where $N$ is the position of the first bead of the first fore period of $\cA$. This implies there must be some quasiperiod $Q\subset\cA$ whose last bead $b^{(e)}$ satisfies $\beta(b^{(e)})\geq \beta-k+1$.
Consider the smallest such $Q$, and say $Q$ is of type $k'$. Set $\beta'=\beta(b^{(e)})$. We have $Q=\{(\beta'+e-1,2),(\beta'+e-2,2),(\beta'+e-3,2)\dots,(\beta'+k',2) ,(\beta'+k'-1,1),\dots,(\beta'+1,1),(\beta',1)\}$. If $(\beta'-1,1)\in\cA$ then $\beta'>\beta-k+1$ and $\{(\beta'+e-2,2),(\beta'+e-3,2)\dots,(\beta'+k',2) ,(\beta'+k'-1,1),\dots,(\beta'+1,1),(\beta',1),(\beta'-1,1)\}\subset\cA$ is a smaller quasiperiod than $Q$ whose last bead $b$ satisfies $\beta(b)\geq \beta-k+1$, contradicting minimality of $Q$. So $(\beta'-1,1)\notin\cA$. If $(\beta'+k'-1,2)\in\cA$, then $\{(\beta'+e-1,2),(\beta'+e-2,2),(\beta'+e-3,2)\dots,(\beta'+k',2) ,(\beta'+k'-1,2),\dots,(\beta'+1,1),(\beta',1)\}\subset\cA$ is a smaller quasiperiod than $Q$, again contradicting minimality of $Q$. So $(\beta'+k'-1,2)\notin\cA$. Since $(\beta'-1,1),(\beta'+k'-1,2)\notin\cA$, $Q$ is a left-shiftable quasiperiod. 

In the pattern (k) it is impossible for the beads $(\gamma,2)$ to belong to the same quasiperiod as the beads $(\gamma',1)$ for all $\beta-k\leq \gamma\leq \beta-1$ and all $\beta-k+1\leq \gamma'\leq \beta$. This implies that $Q$ is the minimal quasiperiod in its vessel, hence is an aft period. Moreover, we claim that the left shift of $Q$ is a fore period: this is clear if $Q$ is not concentrated in a single row. If $Q$ is concentrated in row $1$ then $(\beta'+e-1,2)\notin\cA$ because otherwise $Q$ is not minimal. Then the left shift of $Q$ is a fore period. 
 If $Q$ is concentrated in row $2$, so if $k'=0$, then $(\beta'-1,2)=(\beta'+k'-1,2)$, $(\beta'-1,1)\notin\cA$ and this implies that the left shift of $Q$ is a fore period. So in every possible case, the left shift of $Q$ is a fore period. This contradicts the assumption that $\cA$ is a highest weight vertex in the $\slinf$-crystal. 
\end{proof}

\begin{corollary}\label{2fd}
Suppose $|\bla,\mbs\rangle$ is a charged bipartition, $|\bla|=n$. Then $\el(\bla)$ is a finite-dimensional representation of the rational Cherednik algebra $\mathsf{H}_c(B_n)=H_c(G(2,1,n))$ at the corresponding parameters $c=(1/e,(s_2-s_1)/e-1/2)$ if and only if 
\begin{enumerate}
\item $\cA(\bla,\mbs)$ satisfies the pattern avoidance condition of Theorem \ref{depth0bipartition}, 
\item and additionally, 
\begin{enumerate}
\item if $b=(\beta,1)\in\cA$ and $(\beta-1,1)\notin\cA$, then $b$ is the last bead of a period.
\item if $b=(\beta,2)\in\cA$ and $(\beta-1,2)\notin\cA$, then either $(\beta,1)\notin\cA$ or $(\beta,1)\in\cA$ is the last bead of a period.
\end{enumerate}
\end{enumerate}
\end{corollary}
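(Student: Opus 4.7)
The proof reduces via three previously established results. By the discussion in Sections~\ref{findim}--\ref{amu}, $\el(\bla)$ is finite-dimensional if and only if $\bla$ is a highest-weight vertex for both the $\sle$-crystal and the $\slinf$-crystal. Theorem~\ref{depth0bipartition} identifies the $\slinf$-highest-weight condition with~(1), and Proposition~\ref{hwvchapeau} identifies the $\sle$-highest-weight condition with total $e$-periodicity of $\cA=\cA(\bla,\mbs)$. Hence it suffices to prove, under the assumption that~(1) holds, that total $e$-periodicity is equivalent to condition~(2).

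For the forward direction, assume $\cA$ is totally $e$-periodic. Condition~(2a) follows immediately from the non-increasing $j$-sequence of a period: if $(\beta,1)\in\cA$ with $(\beta-1,1)\notin\cA$ were the $k$-th bead of a period $P$ with $k<e$, the $(k+1)$-th bead of $P$ would have to be $(\beta-1,1)\in\cA$, a contradiction; hence $k=e$. For~(2b), let $(\beta,2)\in\cA$ with $(\beta-1,2)\notin\cA$ and $(\beta,1)\in\cA$, and let $P,P^*$ denote the distinct periods containing $(\beta,1)$ and $(\beta,2)$ respectively. If $(\beta,2)$ is not the last bead of $P^*$, the next bead of $P^*$ must be $(\beta-1,1)$ (since $(\beta-1,2)\notin\cA$ and the $j$-sequence is non-increasing), which would also be the $(k+1)$-th bead of $P$ were $(\beta,1)$ at position $k<e$, forcing $P$ and $P^*$ to share a bead---impossible. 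Otherwise $P^*$ lies entirely in row~$2$ with first bead $(\beta+e-1,2)$, and the recursive peeling order produces a cyclic dependency: the min-$j$ requirement at column $\beta$ when $P^*$ is peeled demands $(\beta,1)\in P$ be peeled first, while the maximum-bead requirement when $P$ is peeled (with first bead at column $\beta+k-1<\beta+e-1$) demands $(\beta+e-1,2)\in P^*$ be peeled first. In either sub-case $(\beta,1)$ must be the last bead of $P$.

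The backward direction is the contrapositive: if~(2) fails at a witness bead $b$, the arguments above show that $b$ cannot lie at any position $k<e$ of a period, and by hypothesis $b$ is not the last bead of any period either, so $b$ is free, making $\cA$ not totally $e$-periodic.

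The main technical obstacle is the peeling-order argument in case~(2b) of the forward direction, which requires simultaneously tracking the min-$j$ rule (governing which bead of a column is taken by the current period) and the maximum-bead rule (governing which column the current period starts from) in the greedy recursive construction. The remaining cases follow directly from the non-increasing $j$-sequence defining a period.
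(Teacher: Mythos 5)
Your reduction is the one the paper uses: finite-dimensionality is equivalent to being a source vertex in both crystals, Theorem \ref{depth0bipartition} converts the $\slinf$-condition into the pattern avoidance (1), Proposition \ref{hwvchapeau} converts the $\sle$-condition into total $e$-periodicity of $\cA$, and what remains is to show that, given (1), total $e$-periodicity is equivalent to (2). Your proof of the implication ``totally $e$-periodic $\Rightarrow$ (2)'' is correct, and in fact more carefully argued than the paper's one-line version: the disjointness of the peeled periods, the non-increasing row condition, and the max-column/min-row rules of the peeling order do force the witness bead $(\beta,1)$ to sit in position $e$ of its period in both cases (2a) and (2b).

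The gap is in the converse. What you label ``the backward direction'' establishes ``not (2) $\Rightarrow$ not totally $e$-periodic,'' which is the contrapositive of the forward implication you have already proved, not of the backward one. The implication actually needed for the ``if'' half of the corollary --- that (1) and (2) together force $\cA$ to \emph{be} totally $e$-periodic, i.e.\ that the peeling process exhausts every bead --- is never addressed, and it is the substantive half of the equivalence: your arguments show only that a bead to the right of a space \emph{which happens to lie in a period} must lie at position $e$; they do not show that such beads lie in periods at all, let alone that all beads do. The paper closes this by first asserting that total $e$-periodicity is equivalent to every bead directly to the right of a space belonging to a period, and then using the pattern avoidance (1) to see that a bead $(\beta,2)$ with $(\beta-1,2)\notin\cA$ necessarily sits above a full run $(\beta-1,1),\dots,(\beta-e-1,1)$ of beads in row $1$, so that condition (2) allows one to exhibit the period containing it (and similarly in row $1$). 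Some argument of this kind --- which genuinely uses hypothesis (1), unlike your forward direction --- must be supplied before the sufficiency of (1) and (2) for finite-dimensionality is proved.
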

\begin{proof} $\el(\bla)$ is finite-dimensional if and only if $\cA:=\cA(\bla,\mbs)$ is a source vertex in both the $\slinf$- and the $\sle$-crystals. By Theorem \ref{depth0bipartition}, $\cA$ is a source vertex for the $\slinf$-crystal if and only if $\cA$ avoids the $e+1$ patterns described in the theorem. In order for $\cA$ to also be a source vertex for the $\sle$-crystal, $\cA$ must in addition be totally $e$-periodic. This is the case if and only if any bead directly to the right of a space belongs to an $e$-period. If $b:=(\beta,1)\in\cA$ and $(\beta-1,1)\notin\cA$ 
 then $b$ is in a period if and only if it is the last bead of a period. If $b:=(\beta,2)$ and $(\beta-1,2)\notin\cA$, 
then since we are already assuming $\cA$ is a highest weight vertex for the $\slinf$-crystal, it follows from Theorem \ref{depth0bipartition} that $(\beta-1,1)$, $(\beta-2,1)$,..., $(\beta-e,1)$,$(\beta-e-1,1)\in\cA$. 
Thus $b$ will belong to a period if and only if condition (2)(b) is satisfied.
\end{proof}

\begin{remark} Note that this theorem concerns \textit{any} bipartition $\bla$, and thus detects finite-dimensionality of $\el(\bla)$ for any $\bla\in\Irr\;  B_n$,  not just $\el(\triv)$. This is a new result which can be applied not only to Cherednik algebras of type $B$ but to type $D$ as well, since finite-dimensional representations of $H_{1/e}(D_n)$ are obtained via restriction from finite-dimensional representations of $H_{(1/e,0)}(B_n)$. For example, the main theorem of \cite{Shelley-AbrahamsonSun2016}, which was proved by computing wall-crossings, follows immediately from Corollary \ref{2fd}. Indeed: let $e\in2\N$ and  $\mbs=(0,e/2)$, so that $c=(1/e,0)$ is the corresponding type $B$ Cherednik algebra parameter. Let $\lambda=(\lambda_1^{a_1},\lambda_2^{a_2},\dots)\neq \emp$ be an arbitrary partition with $\lambda_i$ the distinct nonzero parts of $\lambda$ and $a_i$ their multiplicities. The abacus $\cA$ of the charged bipartition $|(\lambda,\lambda),(0,e/2)\rangle$ has the same arrangement of beads in row $2$ as in row $1$, except that the beads in row $2$ are shifted $e/2$ units to the right relative to those in row $1$. This means that $\cA$ has a pair of spaces in positions $(\lambda_1-a_1,1)$ and $(\lambda_1-a_1+e/2,2)$, violating the pattern avoidance condition of Theorem \ref{depth0bipartition}. Therefore $\el(\lambda,\lambda)$ is infinite-dimensional, implying \cite[Theorem 2.1]{Shelley-AbrahamsonSun2016}.
\end{remark}

\begin{remark}
Combining Corollary \ref{2fd} and \cite[Theorem 7.7]{Gerber2016}, we get an easy combinatorial characterization
of charged bipartitions whose level-rank dual is an FLOTW $e$-partition (see e.g. \cite[Definition 5.7.8]{GeckJacon2011} for the definition).
\end{remark}

\textbf{Acknowledgments.} We thank Galyna Dobrovolska, Olivier Dudas, Pavel Etingof, and Peng Shan for helpful communications; Daniel Juteau and Seth Shelley-Abrahamson for asking if we could compute the set of parameters such that $\el(\triv)$ is finite-dimensional; Stephen Griffeth and Daniel Juteau for interesting exchanges regarding \cite{GGJL} and \cite{GriffethJuteau2017}; and the anonymous referee for many helpful remarks. E.N. gratefully acknowledges the support of the Max Planck Institute for Mathematics, Bonn. We would also like to thank the organizers of the conference ``Representation Theory in Samos" held in Greece in July 2016, where we first discussed some of the problems solved in this paper.

\bibliographystyle{plain}

\begin{thebibliography}{10}


{
  

\bibitem{Ariki2007}
Susumu Ariki.
\newblock {Proof of the modular branching rule for cyclotomic Hecke algebras}.
\newblock {\em J. Alg.}, 306:290--300, 2007.


\bibitem{BEG}
Yuri Berest, Pavel Etingof, and Victor Ginzburg.
\newblock {Finite-dimensional representations of rational Cherednik algebras}. 
\newblock {\em Int. Math. Res. Not.} 2003, no. 19, 1053--1088. 

\bibitem{BezrukavnikovEtingof}
Roman Bezrukavnikov and Pavel Etingof. 
\newblock {Parabolic induction and restriction functors for rational Cherednik algebras}. 
\newblock {\em Selecta Math. (N.S.)} 14 (2009), no. 3-4, 397--425. 

\bibitem{ChuangRouquier2008}
Joseph Chuang and Rapha\"{e}l Rouquier.
\newblock{Derived equivalences for symmetric groups and $\mathfrak{sl}_2$-categorification}.
\newblock {\em Ann. of Math. (2)} 167 (2008), no. 1, 245--298. 

\bibitem{DipperMathas}
Richard Dipper and Andrew Mathas.
\newblock{Morita equivalences of Ariki-Koike algebras.}
\newblock{\em Math. Z.} 240 (2002), no. 3, 579--610. 

\bibitem{DVV}
Olivier Dudas, Michela Varagnolo, and \'Eric Vasserot.
\newblock {Categorical actions on unipotent representations I. Finite unitary
  groups}.
\newblock 2015.
\newblock arXiv:1509.03269.

\bibitem{DVV2}
Olivier Dudas, Michela Varagnolo, and \'Eric Vasserot.
\newblock{Categorical actions on unipotent representations of finite classical groups}.
\newblock To appear in {\em Contemporary Mathematics}.

\bibitem{Etingof2012}
Pavel Etingof.
\newblock{Supports of irreducible spherical representations of rational Cherednik algebras of finite Coxeter groups}.
\newblock{\em Adv. Math.} 229 (2012), no. 3, 2042--2054. 

\bibitem{EtingofGinzburg2002}
Pavel Etingof and Victor Ginzburg.
\newblock{Symplectic reflection algebras, Calogero-Moser space, and deformed Harish-Chandra homomorphism.}
\newblock {\em Invent. Math.} 147 (2002), no. 2, 243--348. 


\bibitem{FLOTW1999}
Omar Foda, Bernard Leclerc, Masato Okado, Jean-Yves Thibon, and Trevor Welsh.
\newblock {Branching functions of $A_{n-1}^{(1)}$ and Jantzen-Seitz problem for
  Ariki-Koike algebras}.
\newblock {\em Adv. Math.}, 141:322--365, 1999.

\bibitem{GeckJacon2011}
Meinolf Geck and Nicolas Jacon.
\newblock {\em {Representations of Hecke Algebras at Roots of Unity}}.
\newblock Springer, 2011.

\bibitem{Gerber2015}
Thomas Gerber.
\newblock {Crystal isomorphisms in Fock spaces and Schensted correspondence in
  affine type A}.
\newblock {\em Alg. and Rep. Theory}, 18:1009--1046, 2015.

\bibitem{Gerber2016}
Thomas Gerber.
\newblock {Triple crystal action in Fock spaces}.
\newblock 2016.
\newblock arXiv:1601.00581.

\bibitem{Gerber2016a}
Thomas Gerber.
\newblock {Heisenberg algebra, wedges and crystals}.
\newblock 2016.
\newblock arXiv:1612.08760.


\bibitem{GGOR}
\newblock Victor Ginzburg, Nicolas Guay, Eric Opdam, and Rapha\"{e}l Rouquier.
\newblock {On the category $\mathcal{O}$ for rational Cherednik algebras.}
\newblock {\em Invent. Math.} 154 (2003), no. 3, 617--651. 

\bibitem{GGJL}
\newblock Stephen Griffeth, Armin Gusenbauer, Daniel Juteau and Martina Lanini.
\newblock {Parabolic degeneration of rational Cherednik algebras}.
\newblock Sel. Math. New Ser. (2017). https://doi.org/10.1007/s00029-017-0337-3


\bibitem{GriffethJuteau2017}
\newblock Stephen Griffeth and Daniel Juteau.
\newblock {$W$-exponentials, Schur elements, and the support of the spherical representation of the rational Cherednik algebra}.
\newblock arXiv:1707.08196.

\bibitem{HongKang2002}
Jin Hong and Seok-Jin Kang.
\newblock {\em {Introduction to Quantum Groups and Crystal Bases}}.
\newblock American Mathematical Society, 2002.

\bibitem{JaconLecouvey2012}
Nicolas Jacon and C\'edric Lecouvey.
\newblock {A combinatorial decomposition of higher level Fock spaces}.
\newblock {\em Osaka J. Math.}, 50(4):897--920, 2013.


\bibitem{JaconLecouvey2016}
Nicolas Jacon and C\'edric Lecouvey.
\newblock {Crystal isomorphisms and wall-crossing maps for rational Cherednik
  algebras}.
\newblock {\em Transf. Groups}, 2016.
\newblock doi:10.1007/s00031-016-9402-9.


\bibitem{James1978}
Gordon James.
\newblock {Some combinatorial results involving Young diagrams}.
\newblock {\em Math. Proc. Cambridge Phil. Soc.}, 83(1):1--10, 1978.

\bibitem{JamesKerber1984}
Gordon James and Adalbert Kerber.
\newblock {\em {The Representation theory of the Symmetric Group}}.
\newblock Cambridge University Press, 1984.

\bibitem{JMMO1991}
Michio Jimbo, Kailash~C. Misra, Tetsuji Miwa, and Masato Okado.
\newblock Combinatorics of representations of {$U_q(\widehat{sl(n)})$} at
  {$q=0$}.
\newblock {\em Comm. Math. Phys.}, 136(3):543--566, 1991.

\bibitem{Kashiwara1993}
Masaki Kashiwara.
\newblock {Global crystal bases of quantum groups}.
\newblock {\em Duke Math. J.}, 69:455--485, 1993.


\bibitem{Losev2013}
Ivan Losev.
\newblock {Highest weight $\mathfrak{sl}_2$-categorifications I: crystals}.
\newblock {\em Math. Z.}, 274:1231--1247, 2013.

\bibitem{Losev2015a}
Ivan Losev.
\newblock{Rational Cherednik algebras and categorification}.
\newblock 2015.
\newblock arXiv:1509.08550.

\bibitem{Losev2015}
Ivan Losev.
\newblock {Supports of simple modules in cyclotomic Cherednik categories O}.
\newblock 2015.
\newblock arXiv:1509.00526.

\bibitem{Losev}
Ivan Losev.
\newblock{Proof of Varagnolo and Vasserot conjecture on cyclotomic categories $\mathcal{O}$.}
\newblock{\em Selecta Math. (N.S.)} 22 (2016), no. 2, 631--668. 

\bibitem{LosevSA}
Ivan Losev and Seth Shelley-Abrahamson.
\newblock {On refined filtration by supports for rational Cherednik categories O}.
\newblock 2016
\newblock arXiv:1612.08211.

\bibitem{Rouquier}
Rapha\"{e}l Rouquier.
\newblock{$q$-Schur algebras and complex reflection groups.}
\newblock {\em Mosc. Math. J.} 8 (2008), no. 1, 119--158, 184. 

\bibitem{RSVV}
Rapha\"{e}l Rouquier, Peng Shan, Michela Varagnolo, and \'{E}ric Vasserot.
\newblock{Categorifications and cyclotomic rational double affine Hecke algebras}.
\newblock {\em Invent. Math.} 204 (2016), no. 3, 671--786. 

\bibitem{Sagan2001}
Bruce Sagan.
\newblock {\em {The Symmetric Group. Representations, Combinatorial Algorithms, and Symmetric Functions}}.
\newblock Springer, 2001.

\bibitem{Shan2011}
Peng Shan.
\newblock {Crystals of Fock spaces and cyclotomic rational double affine Hecke
  algebras}.
\newblock {\em Ann. Sci. \'Ec. Norm. Sup\'er.}, 44:147--182, 2011.

\bibitem{ShanVasserot2012}
Peng Shan and \'Eric Vasserot.
\newblock {Heisenberg algebras and rational double affine Hecke algebras}.
\newblock {\em J. Amer. Math. Soc.}, 25:959--1031, 2012.

\bibitem{Shelley-AbrahamsonSun2016}
Seth Shelley-Abrahamson and Alec Sun.
\newblock{Towards a classification of finite-dimensional representations of rational Cherednik algebras of type $D$}.
\newblock arXiv:1612.05090.

\bibitem{Uglov1999}
Denis Uglov.
\newblock {Canonical bases of higher-level $q$-deformed Fock spaces and
  Kazhdan-Lusztig polynomials}.
\newblock {\em Progr. Math.}, 191:249--299, 1999.


\bibitem{VV}
Michela Varagnolo and \'{E}ric Vasserot.
\newblock {Finite-dimensional representations of DAHA and affine Springer fibers: the spherical case.}
\newblock {\em Duke Math. J.} 147 (2009), no. 3, 439--540. 

\bibitem{Wilcox}
Stewart Wilcox.
\newblock{Representations of the rational Cherednik algebra}.
\newblock Thesis (Ph.D.)–Harvard University. 2011. 123 pp. ISBN: 978-1124-72988-6.

}

\end{thebibliography}

\end{document}